\documentclass[11pt]{amsart}

\usepackage{amsfonts,amsthm,amsmath,enumerate,amssymb,latexsym,color,tcolorbox,tikz,mathrsfs,bm}
\usetikzlibrary{patterns}
\usepackage{subfig}
\usepackage[text={6in,9in},centering]{geometry}
\usepackage[colorlinks,
            linkcolor=blue,
            anchorcolor=red,
            citecolor=red]{hyperref}

\graphicspath{{figures/}}

\theoremstyle{plain}
\newtheorem{theorem}{Theorem}[section]
\newtheorem{lemma}[theorem]{Lemma}
\newtheorem{corollary}[theorem]{Corollary}
\newtheorem{proposition}[theorem]{Proposition}

\theoremstyle{definition}
\newtheorem{definition}[theorem]{Definition}
\newtheorem{example}[theorem]{Example}

\theoremstyle{remark}
\newtheorem{remark}[theorem]{Remark}

\numberwithin{equation}{section}


\DeclareMathOperator{\dist}{dist}

\def\R{\mathbb R}
\def\Z{\mathbb Z}
\def\D{\mathcal D}

\def\P{\mathcal P}
\def\C{\mathscr C}
\def\i{\bm i}
\def\j{\bm j}

\def\bi{\mathbf i}

\def\id{\textup{id}}

\def\vp{\varphi}


\begin{document}

\title[Existence of cut points of connected GSCs]{On the existence of cut points of connected \\ generalized Sierpi\'nski carpets}

\author{Huo-Jun Ruan}
\address{School of Mathematical Sciences, Zhejiang University, Hangzhou, China}
\email{ruanhj@zju.edu.cn}

\author{Yang Wang}
\address{Department of Mathematics, The Hong Kong University of Science and Technology, Clear Water Bay, Kowloon, Hong Kong}
\email{yangwang@ust.hk}

\author{Jian-Ci Xiao}
\address{School of Mathematical Sciences, Zhejiang University, Hangzhou, China}
\email{jcxshaw24@zju.edu.cn}

\subjclass[2010]{Primary 28A80; Secondary 54A05}

\keywords{Generalized Sierpi\'nski carpets, cut points, connectedness, Hata graphs.}

\thanks{Corresponding author: Jian-Ci Xiao}

\begin{abstract}
    In a previous work joint with Dai and Luo, we show that a connected generalized Sierpi\'nski carpet (or shortly a GSC) has cut points if and only if the associated $n$-th Hata graph  has a long tail for all $n\geq 2$. In this paper, we extend the above result by showing that it suffices to check a finite number of those graphs to reach a conclusion. This criterion provides a truly ``algorithmic'' solution to the cut point problem of connected GSCs. We also construct for each $m\geq 1$ a connected GSC with exactly $m$ cut points and demonstrate that when $m\geq 2$, such a GSC must be of the so-called fragile type.
\end{abstract}

\maketitle

\section{Introduction}

A large amount of common fractal sets are totally disconnected or at least have infinitely many connected components. But there are indeed some of them which are connected (e.g., the standard Sierpi\'nski carpet). For a given pair of connected fractals, an interesting question is whether they are mutually homeomorphic. In~\cite{Why58}, Whyburn came up with an elegant characterization concerning some special cases: A metrizable topological space is homeomorphic to the standard Sierpi\'nski carpet if and only if it is a locally connected planar continuum of topological dimension 1 that has no local cut points. Recall that for any connected topological space $X$, $x\in X$ is called a (global) \emph{cut point} of $X$ if $X\setminus\{x\}$ is disconnected, and is called a \emph{local cut point} if $x$ is a cut point of some connected neighborhood of itself.

When the given connected fractal is a self-affine set, a well-known result of Hata~\cite{Hat85} guarantees that it is also locally connected. Combining with Whyburn's result, the existence of local cut points becomes the key to determine whether a given planar connected self-affine set is homeomorphic to the standard Sierpi\'nski carpet. However, it appears that developing a systematic approach for detecting the existence of local cut points, or even cut points, will be a difficult task. In~\cite{ALT21}, Akiyama, Loridant and Thuswaldner characterize the existence of cut points of a special class of self-affine tiles. They considered the self-affine tile $T=\bigcup_{d\in\mathcal{D}} M^{-1}(T+d)$, where $M$ is a integral matrix with characteristic polynomial $x^2+ax+b$, where $a,b\in\mathbb{Z}$ are such that $|a|<b$ and $b\geq 2$, and $\mathcal{D}=\{0,v,2v,\ldots, (b-1)v\}$ for some vector $v\in\mathbb{R}^2$ such that $v, Mv$ are linearly independent. It is showed in~\cite{ALT21} that $T$ has cut points if and only if $2|a|\leq b+2$. In~\cite{Wen22}, Wen studies the existence of cut points of the so-called fractal necklaces. In particular, it is showed that some special classes of fractal necklaces contain no cut points. Interested readers can refer to~\cite{DLRWX21} for an introduction to earlier relevant studies.

Together with Dai and Luo, the authors provided a criterion in~\cite{DLRWX21} on the existence of (local) cut points of a special class of planar self-similar sets called the generalized Sierpi\'nski carpets, which are defined as follows. Let $N\geq 2$ and let $\D\subset\{0,1,\ldots,N-1\}^2$ be a non-empty digit set with $1<|\D|<N^2$ (to avoid trivial cases), where $|\D|$ denotes the number of elements in $\D$. For each $i\in\D$, define a similarity map $\vp_i$ by
\[
    \vp_i(x) = \frac{1}{N}(x+i), \quad x\in\R^2.
\]
We call the self-similar set $F=F(N,\D)$ associated with the iterated function system (IFS for short) $\{\vp_i:i\in\D\}$ a \emph{generalized Sierpi\'nski carpet} (or abbreviated to GSC).

For convenience, we regard the digit set $\D$ as the index set of the IFS $\{\vp_i:i\in\D\}$ instead of enumerating it by $\{\vp_1,\ldots,\vp_{|\D|}\}$. Under this setting, the following notations are typically used.

\begin{enumerate}
    \item For $k\in\Z^+$, $\D^k:=\{\i=i_1\cdots i_k: i_1,\ldots,i_k\in\D\}$. Let $\D^0=\{\vartheta\}$, where $\vartheta$ denotes the \emph{empty word}. For $k\geq 0$ and $\i\in \D^k$, we call $\i$ a word of length $|\i|:=k$.
    \item Let $\D^*=\bigcup_{k=1}^\infty\D^k$ and $\D^\infty=\{i_1i_2\cdots: i_j\in\D \text{ for all } j\in\Z^+\}$ denote the collection of finite and infinite words, respectively;
    \item For $\i\in\D^*$, we call $\vp_{\i}(F)$ a level-$|\i|$ \emph{cell};
    \item For $1\leq k\leq n$ and $\i=i_1\cdots i_n\in\D^n$, write $\i|_k:=i_1\cdots i_k$ to be the prefix of $\i$ of length $k$. For $\bi\in\D^\infty$ and $k\geq 1$, $\bi|_k$ is similarly defined;
    \item For $\i,\j\in\D^*$, write $\i\prec\j$ whenever $\i$ is a prefix of $\j$, and $\i\nprec\j$ otherwise;
    \item For $\i\in\D^*$ and $k\in\Z^+$, $\i\D^k:=\{\i\j: \j\in\D^k\}$.
    \item For $k\geq 1$ and $\i=i_1\cdots i_k\in\D^k$, $\vp_{\i}:=\vp_{i_1}\circ\cdots \circ\vp_{i_k}$.
    Also denote by $\vp_{\j}^k$ the $k$-fold composition of $\vp_{\j}$ for $\j\in \D^*$.
    \item For $\i\in\D^*$ and $n\geq 1$, $\i^n:=\underbrace{\i\cdots\i}_{n \text{ terms}}$.
\end{enumerate}

As in~\cite{DLRWX21}, for any graph $G$ and any vertex $v$, we denote by $G-\{v\}$ the subgraph of $G$ obtained by deleting $v$ and all edges incident with $v$. We call $v$ a \emph{cut vertex} of $G$ if the subgraph $G-\{v\}$ is disconnected. A \emph{connected component} of $G$ is just a maximal connected subgraph of $G$.

The criterion of the existence of cut points in~\cite{DLRWX21} is based on an examination on the associated Hata graph sequence of $F$. For $n\geq 1$, the $n$-th \emph{Hata graph} $\Gamma_n=\Gamma_n(N,\D)$ of $F$ is defined by setting the vertex set to be $\D^n$, and demanding that there is an edge joining $\i,\j\in\D^n$ ($\i\neq\j$) if and only if $\vp_{\i}(F)\cap \vp_{\j}(F)\neq\varnothing$.

\begin{definition}[\cite{DLRWX21}]\label{def:chi}
    Let $G=(V,E)$ be a connected graph.  Given a cut vertex $v\in V$ of $G$, let $G_1(v),\ldots,G_m(v)$ be all connected components of $G-\{v\}$ with $|G_1(v)|\geq |G_2(v)|\geq \cdots \geq |G_m(v)|$, where $|G_i(v)|$ stands for the number of vertices in $G_i(v)$, $1\leq i\leq m$. Define
    \[
        \chi(G) = \max\{|G_2(v)|:\, \textrm{$v$ is a cut vertex of $G$}\}
    \]
    if $G$ has cut vertices, and $\chi(G)=0$ if $G$ has none.
\end{definition}

Sometimes it is convenient to say that the $n$-th Hata graph $\Gamma_n$ has a \emph{long tail} if $\chi(\Gamma_n)\geq |\D|^{n-1}-1$. We have shown in~\cite{DLRWX21} that a GSC has cut points if and only if the correpsonding $\Gamma_n$ has a long tail for all $n$.

\begin{theorem}[\cite{DLRWX21}]\label{thm:premain}
    A connected GSC $F=F(N,\D)$ has cut points if and only if $\chi(\Gamma_n)\geq|\D|^{n-1}-1$ for all $n\geq 2$.
\end{theorem}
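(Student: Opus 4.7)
The theorem splits into two directions, and I sketch each.

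\emph{Necessity (cut point $\Rightarrow$ long tails for all $n$).} Assuming $F$ has a cut point $p$, I would write $F\setminus\{p\}=U\sqcup V$ with $U,V$ nonempty and relatively open in $F\setminus\{p\}$. For each $n\geq 2$ consider the set of ``bottleneck words'' $W_n:=\{\i\in\D^n:p\in\vp_{\i}(F)\}$. In the cleanest case $|W_n|=1$, say $W_n=\{v_n\}$, I would argue that $v_n$ is a cut vertex of $\Gamma_n$: every level-$n$ cell sits in $\overline{U}$ or in $\overline{V}$, and two cells from opposite sides can only meet inside $\vp_{v_n}(F)$, so $\Gamma_n-\{v_n\}$ breaks into a $U$-part and a $V$-part. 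To extract the quantitative bound $|G_2(v_n)|\geq|\D|^{n-1}-1$, I would invoke self-similarity: each side must absorb an entire level-$1$ cell worth of descendants, i.e.\ $|\D|^{n-1}$ level-$n$ vertices, minus at most $v_n$ itself. The general case $|W_n|\ge2$ would be reduced to the previous one by contracting the bottleneck cluster around $p$ and showing that at least one vertex inside it still serves as a cut vertex with the same quantitative property.

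\emph{Sufficiency (long tails for all $n$ $\Rightarrow$ cut point).} For each $n\geq 2$ pick a cut vertex $v_n\in\D^n$ achieving $|G_2(v_n)|\geq|\D|^{n-1}-1$, and order the components of $\Gamma_n-\{v_n\}$ as $G_1^{(n)},G_2^{(n)},\ldots$ as in Definition \ref{def:chi}. The plan is to produce a coherent nested sequence of cells that shrinks to a single point, and to verify that this point is a cut point. The heart of the argument is a refinement lemma: whenever $v\in\D^n$ is a cut vertex of $\Gamma_n$ with $|G_2(v)|\geq|\D|^{n-1}-1$, there exists $j\in\D$ so that the child $vj\in\D^{n+1}$ is a cut vertex of $\Gamma_{n+1}$ with $|G_2(vj)|\geq|\D|^n-1$. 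Geometrically, the $|\D|$ children of $v$ form a bottleneck cluster through which all of $G_1^{(n)}$ and $G_2^{(n)}$ must communicate, and a counting/pigeonhole argument on how the children connect to the $|\D|$ siblings of $v$ should identify one child inheriting the bottleneck role. Granting this refinement, I would iteratively construct $\bi\in\D^\infty$ with $\bi|_n$ a valid $v_n$ for every $n$, so that $p:=\bigcap_{n\geq 1}\vp_{\bi|_n}(F)$ is a single point. Defining
\[
A_n=\bigcup_{\i\in G_1^{(n)}}\vp_{\i}(F), \qquad B_n=\bigcup_{\i\in G_2^{(n)}}\vp_{\i}(F),
\]
and passing to Hausdorff-convergent subsequences $A_n\to A$, $B_n\to B$, I expect $A\cap B=\{p\}$ with both $A\setminus\{p\}$ and $B\setminus\{p\}$ nonempty, witnessing $p$ as a cut point of $F$.

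\emph{Main obstacle.} The hardest step is clearly the refinement lemma in the sufficiency direction. A crude pigeonhole would produce a prefix-consistent sequence of cut vertices, but showing the quantitative second-component bound propagates to the next level requires careful bookkeeping of how components of $\Gamma_n-\{v_n\}$ split and regroup among the children $v_nj$, $j\in\D$, and how edges in $\Gamma_{n+1}$ reroute around the new bottleneck. A secondary technical point is ensuring in the necessity direction that multi-cell bottlenecks around $p$ collapse to a single cut vertex without losing the $|\D|^{n-1}-1$ lower bound, which seems to need both the self-similar grid geometry and the connectedness of sibling cells meeting at $p$.
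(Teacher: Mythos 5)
The paper does not actually prove Theorem~\ref{thm:premain}; it is quoted from \cite{DLRWX21} and used as a black box, so your proposal can only be judged on its own merits. Judged that way, the sufficiency direction rests on a ``refinement lemma'' that is almost certainly false and is in any case the entire difficulty: you claim that a cut vertex $v\in\D^n$ of $\Gamma_n$ with $|G_2(v)|\ge|\D|^{n-1}-1$ always has a child $vj\in\D^{n+1}$ that is a cut vertex of $\Gamma_{n+1}$ with $|G_2(vj)|\ge|\D|^{n}-1$. If that were true, induction from $n=2$ would show that $\chi(\Gamma_2)\ge|\D|-1$ alone already forces $\chi(\Gamma_n)\ge|\D|^{n-1}-1$ for every $n$, i.e.\ the theorem's hypothesis would collapse to a single check at level $2$ --- which would make the main result of the present paper (that for non-fragile GSCs one must go up to level $M=3^8+3$, via Proposition~\ref{prop:stillgood}, the pigeonhole over the neighbour-position sets $\P_n,\P'_n$, and Proposition~\ref{prop:twogoodmeanscutpt}) pointless. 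The propagation results that actually hold go the other way: Lemma~\ref{lem:nonfragilelast} and Lemma~\ref{lem:ingoodthenin1} pass cut-vertex information from level $n$ \emph{down} to shorter prefixes, never up. The word that ultimately certifies a cut point is not built digit by digit from independently chosen $v_n$'s but is a periodic word $\omega^k$ extracted from a single deep essential cut vertex, precisely because a bottleneck at level $n$ can be healed at level $n+1$ by connections rerouted through the children of $v$'s neighbours. Your pigeonhole on ``how the children connect to the siblings'' does not address this healing phenomenon.

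The necessity direction also asserts the crux without proof. The claim that ``each side must absorb an entire level-$1$ cell worth of descendants'' can fail for the specific bottleneck vertex containing $p$: one component $U$ of $F\setminus\{p\}$ may be contained in $\vp_{\bi|_k}(F)$ for arbitrarily large $k$ (so $U$ contains no complete level-$1$ cell at all), and then $\Gamma_n-\{v_n\}$ need not split with the advertised quantitative bound, or even be disconnected, until one renormalizes by zooming in along the coding of $p$ --- compare Lemma~\ref{lem:desiredcoding}, whose only purpose is to restore exactly this. Likewise the multi-cell bottleneck case is not a formality: the cells of $W_n$ all contain $p$ and hence form a clique in $\Gamma_n$, and deleting one vertex of a clique disconnects nothing by itself. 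Both directions therefore still need the renormalization machinery of \cite{DLRWX21}, which your sketch replaces with an appeal to self-similarity at precisely the point where the work lies.
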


There is one particular type of connected GSCs of which the existence of cut points is relatively easy to determine. More precisely, a connected GSC $F=F(N,\D)$ is called \emph{fragile} if there is a decomposition of $\D$, say $\D=\D_1\cup\D_2$ with $\D_1\cap\D_2=\varnothing$, such that the intersection
\[
    \Big( \bigcup_{i\in\D_1}\vp_i(F) \Big) \cap \Big( \bigcup_{i\in\D_2}\vp_i(F) \Big)
\]
is a singleton. That is to say, one can divide all level-$1$ cells into two groups meeting at one single point. Clearly, that singleton is a cut point, and we do realize an easily checked criterion for this type of GSCs (please see~\cite[Theorem 3.6]{DLRWX21}). A connected GSC is called \emph{non-fragile} if it is not fragile. Unfortunately, for non-fragile cases, Theorem~\ref{thm:premain} requires us to examine the whole sequence $\{\Gamma_n\}_{n=1}^\infty$. It is of particular interest to ask whether one can detect the existence of cut points by checking only a small section of that sequence. This is the main topic of the paper, and we will show that the answer is affirmative.

Our main result is:

\begin{theorem}\label{thm:goodcp}
    Let $F=F(N,\D)$ be a non-fragile connected GSC. Then there is some $M\geq 2$ independent of $N$ such that $F$ has cut points if and only if $\chi(\Gamma_M)\geq|\D|^{M-1}$.
\end{theorem}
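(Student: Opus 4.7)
The plan is to reduce the theorem to a rigidity statement about Hata graphs that saturate the Theorem~\ref{thm:premain} bound at every level. Since Theorem~\ref{thm:premain} gives $\chi(\Gamma_n)\geq|\D|^{n-1}-1$ for every $n\geq 2$ whenever $F$ has cut points, the forward direction amounts to ruling out the case of persistent equality $\chi(\Gamma_n)=|\D|^{n-1}-1$ under non-fragility, while the reverse direction requires propagating the improved bound $\chi(\Gamma_M)\geq|\D|^{M-1}$ from a single level $M$ to all levels.

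For the forward direction I would proceed by contradiction. Assume $F$ has cut points and $\chi(\Gamma_n)=|\D|^{n-1}-1$ for every $n\geq 2$. Choose for each $n$ a cut vertex $\i_n\in\D^n$ of $\Gamma_n$ whose second component $G_2^{(n)}$ has cardinality exactly $|\D|^{n-1}-1$, and write $\i_n=j_n\k_n$ with $j_n\in\D$ and $\k_n\in\D^{n-1}$. The key combinatorial claim to establish is the rigidity statement that $G_2^{(n)}$ must coincide with $\{j_n\k':\k'\in\D^{n-1},\,\k'\neq\k_n\}$, i.e.\ a punctured level-$1$ block with puncture at $\i_n$; this will rely on the limited ways in which level-$n$ cells from distinct level-$1$ blocks can be joined, combined with the fact that $G_2^{(n)}$ has no edges to the remainder of $\Gamma_n-\{\i_n\}$. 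Having established this, a pigeonhole argument on $\D$ produces a common first letter $j$ for infinitely many $n$, and a compactness argument on the shrinking cells $\vp_{\i_n}(F)\subset\vp_j(F)$ extracts a subsequential limit $x^*$ with $\vp_j(F)\cap(F\setminus\vp_j(F))=\{x^*\}$, so $F$ is fragile with $\D_1=\{j\}$, contradicting the hypothesis.

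For the reverse direction I would start with a cut vertex $\i\in\D^M$ of $\Gamma_M$ having $|G_2|\geq|\D|^{M-1}$ and propagate the cut structure under iteration of $\vp_{\i}$. Using self-similarity, I would verify that $\i^k\in\D^{kM}$ is itself a cut vertex of $\Gamma_{kM}$ whose second component has size at least $|\D|^{kM-1}$; the extra unit in the bound $|\D|^{M-1}$ versus $|\D|^{M-1}-1$ is exactly the margin needed to absorb the loss incurred at each iteration step. An interpolation argument for levels between consecutive multiples of $M$, losing at most one vertex, would then secure $\chi(\Gamma_n)\geq|\D|^{n-1}-1$ for all $n\geq 2$, so Theorem~\ref{thm:premain} yields a cut point of $F$, in fact the fixed point of $\vp_{\i}$.

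The main obstacle will be the rigidity step in the forward direction: the size $|\D|^{n-1}-1$ does not \emph{a priori} force $G_2^{(n)}$ to be a punctured level-$1$ block, because the small component could in principle straddle several level-$1$ blocks through bridges at shared boundaries. Ruling out such hybrid configurations requires a careful combinatorial analysis of which bridges can survive the deletion of a single cell $\vp_{\i_n}(F)$, and this is where the exact count $|\D|^{n-1}-1$ plays its essential role. Once the rigidity is in hand, the universality of $M$ follows because the entire argument depends only on the combinatorics of $\D$ and the intrinsic geometry of cell intersections in $\R^2$, not on the ambient integer $N$.
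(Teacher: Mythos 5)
Your reverse direction contains the real gap. You assert that, given a cut vertex $\i\in\D^M$ with second component of size at least $|\D|^{M-1}$, self-similarity lets you ``verify that $\i^k$ is itself a cut vertex of $\Gamma_{kM}$ whose second component has size at least $|\D|^{kM-1}$.'' This is precisely the step that does not follow from self-similarity alone, and it is where all the work in the paper lives. Passing from level $M$ to level $2M$ replaces the deleted cell $\vp_{\i}(F)$ by the union $\vp_{\i}\bigl(\bigcup_{\j\neq\i}\vp_{\j}(F)\bigr)$, and this inserted set can reconnect the two components $X_1$ and $X_1'$ of $\bigcup_{\j\in\D^M\setminus\{\i\}}\vp_{\j}(F)$: nothing prevents $\vp_{\i}(X_1)$ from touching both $X_1$ and $X_1'$ simultaneously. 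The paper's Proposition~\ref{prop:twogoodmeanscutpt} isolates exactly the extra separation hypothesis (its condition (3)) needed to make the iteration go through, and securing that hypothesis is the purpose of the entire apparatus of essential cut vertices, the suffix-propagation result (Proposition~\ref{prop:stillgood}, proved by a long case analysis in Section 6), and the pigeonhole argument on the position sets $\P_n,\P_n'\subset\P$ of neighboring cells --- which is also where the explicit value $M=3^8+3$ comes from. Your sentence about the ``extra unit'' $|\D|^{M-1}$ versus $|\D|^{M-1}-1$ being ``the margin needed to absorb the loss at each iteration'' does not correspond to anything: that gap of one distinguishes essential cut vertices (separating two distinct level-$1$ blocks) from non-essential ones, and is the non-fragile analogue of Theorem~\ref{thm:premain}, not an iteration allowance.

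The forward direction is also misconceived, though less fatally. For non-fragile $F$ it is already known (cited as \cite[Theorem 1.11]{DLRWX21}, restated here as Lemma~\ref{lem:pra1}) that $F$ has cut points if and only if $\chi(\Gamma_n)\geq|\D|^{n-1}$ for \emph{all} $n\geq2$, so this direction is immediate and needs no rigidity argument. Your proposed contradiction argument assumes $\chi(\Gamma_n)=|\D|^{n-1}-1$ for \emph{every} $n$ and derives fragility; even if the rigidity claim and the compactness extraction were carried out, this only shows strict inequality at \emph{some} level, whereas the theorem requires $\chi(\Gamma_M)\geq|\D|^{M-1}$ at the specific universal level $M$ fixed by the reverse direction. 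As written, neither direction of your proposal closes.
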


More precisely, one can see from later proof that the constant $M=3^8+3$ will suffice. In addition, we look into the possible number of cut points. For simplicity, denote by $C_F$ the set of cut points of any given connected GSC $F$.

\begin{theorem}\label{thm:number}
    For every $m\in\Z^+$, there is some connected GSC $F$ with $\# C_F=m$.
\end{theorem}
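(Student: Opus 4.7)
The plan is to construct, for each $m\in\Z^+$, an explicit digit set $\D_m\subset\{0,\dots,N_m-1\}^2$ whose GSC $F:=F(N_m,\D_m)$ is connected and satisfies $\#C_F=m$. Guided by the companion statement that such an $F$ must be fragile whenever $m\ge 2$, I would for $m\ge 2$ arrange $\D_m$ as a disjoint union of $m+1$ digit blocks $\D^{(0)},\dots,\D^{(m)}$ laid out diagonally in $\{0,\dots,N_m-1\}^2$, with consecutive blocks touching only at a single corner of grid cells. Each block is chosen to be a filled square patch (or a small Sierpi\'nski-carpet-like pattern) large enough that the associated subset $A_i:=\bigcup_{j\in\D^{(i)}}\vp_j(F)$ is connected and internally two-dimensional. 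Consecutive blocks then meet in exactly one bridge point $b_i\in A_i\cap A_{i+1}$. The remaining case $m=1$ is handled by a separate, smaller explicit construction, which may well be non-fragile.

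Once the construction is in place, the lower bound $\#C_F\ge m$ is immediate: for each $1\le k\le m$, the decomposition $\D_m=(\D^{(0)}\cup\cdots\cup\D^{(k-1)})\sqcup(\D^{(k)}\cup\cdots\cup\D^{(m)})$ has singleton intersection $\{b_k\}$, so $b_k$ is a cut point. The difficult half is the upper bound $\#C_F\le m$, for which I would invoke Theorem~\ref{thm:premain}: a point $x\in F$ is a cut point only if its address $\bi\in\D^\infty$ produces long-tailed cut vertices in $\Gamma_n$ for every $n$. For any candidate $x\in F\setminus\{b_1,\dots,b_m\}$, my goal would be to find a level $n_0$ at which the vertex $\bi|_{n_0}$ fails to be a long-tailed cut vertex of $\Gamma_{n_0}$, because some level-$n_0$ cell neighboring $\vp_{\bi|_{n_0}}(F)$ supplies a detour around $x$.

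The main obstacle is the self-similarity trap: every level-$n$ cell $\vp_{\i}(F)$ is a scaled replica of $F$ and inherits would-be cut points at $\vp_{\i}(b_1),\dots,\vp_{\i}(b_m)$, producing an a priori infinite supply of candidate cut points that must all be ruled out. Controlling this requires designing the blocks $\D^{(i)}$ so that every interior cell has enough neighbors in $\D_m$ to provide a bypass, and then verifying that this bypass property propagates through all levels. I expect this will reduce to a finite combinatorial check that, for each position a bridge can occupy inside a level-$1$ cell, some adjacent cell of $\D_m$ supplies a short path in $\Gamma_2$ around the corresponding vertex; a substitution-type induction on $n$ then upgrades this to all $\Gamma_n$ and completes the argument.
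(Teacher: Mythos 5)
Your lower bound is fine, but the construction itself has a fatal flaw, and it is exactly the ``self-similarity trap'' you name without escaping. Suppose blocks $\D^{(k-1)}$ and $\D^{(k)}$ meet only at the corner $b_k=\vp_{j}((1,1))$, where $j$ is the top-right digit of $\D^{(k-1)}$. The cell $\vp_{j}(F)$ is a bottleneck: all of $A_k\cup\cdots\cup A_m$ attaches to the rest of $F$ only through the single point $b_k$, which lies in the top-right sub-block $\vp_j(A_m)$ of the copy $\vp_j(F)$. Since $j$ is the extreme corner of its block, $\vp_j(A_m)$ has no other neighbouring cells, so removing the scaled bridge point $\vp_j(b_m)$ separates $\vp_j(A_m)\cup A_k\cup\cdots\cup A_m$ from everything else: $\vp_j(b_m)$ is a genuine cut point of $F$, and iterating gives $\vp_j^n(b_i)\in C_F$ for all $n$. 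So your GSC has infinitely many cut points, not $m$, and no ``finite combinatorial check plus substitution induction'' can rescue it --- the corner cells simply do not have the bypassing neighbours your plan requires. (Relatedly, your upper-bound half is only a plan: the per-point Hata-graph criterion and the inductive propagation are asserted, not carried out.)

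The paper's Example~\ref{exa:exactlyncutpts} avoids this by a different design and a different upper-bound mechanism. The digit set is a zigzag of half-columns (full column, top half, bottom half, \dots, full column, with $N=2m$), so consecutive pieces meet at interior points $(i/(2m),1/2)$ of cell edges rather than at corners, and $F$ contains a connected skeleton $A$: the union of all line segments in $F$ of slope $m$ and of vertical segments. The two key facts are that $A\setminus\{x\}$ stays connected unless $x$ is one of the $2m-3$ designated points, and that \emph{every} cell $\vp_{\i}(F)$ contains infinitely many such segments. Then for $x$ outside the designated set and any $y\neq x$, a cell containing $y$ but not $x$ meets $A\setminus\{x\}$, so $F\setminus\{x\}$ is connected --- this disposes of all scaled copies of cut points at once, which is precisely the step your approach cannot complete. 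If you want to salvage your outline, you must replace corner-touching blocks by a configuration admitting such a globally connected, almost-unpuncturable skeleton.
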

\begin{proof}
    Please see Example~\ref{exa:exactlyncutpts} for the construction.
\end{proof}

We remark that all GSCs constructed in Example~\ref{exa:exactlyncutpts} are fragile. This leads to a natural question: \emph{Given any $m\in\Z^+$, is there a non-fragile connected GSC $F$ with $\#C_F=m$?} When $m=1$, the GSC in Figure~\ref{fig:numexa}(A) is as required. However, the situation is completely different when $m\geq 2$.
More precisely, we have the following observation.

\begin{theorem}\label{thm:2orinfinity}
    If $F$ is non-fragile and $\#C_F\geq 2$, then $\#C_F=+\infty$.
\end{theorem}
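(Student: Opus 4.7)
The plan is to leverage self-similarity in the following way. The key observation is: for any cut point $p$ of $F$ and any $\i\in\D^*$ with $p\in\vp_{\i}(F)$, if $\vp_{\i}(F)\setminus\{p\}$ happens to be disconnected, then $\widetilde p:=\vp_{\i}^{-1}(p)\in F$ is automatically also a cut point of $F$, since $\vp_{\i}$ maps $F$ homeomorphically onto $\vp_{\i}(F)$. Iterating this pullback using words $\i$ of positive length will produce a sequence of distinct cut points converging to a fixed point of some $\vp_{\i}$, which forces $\#C_F=+\infty$.

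Given distinct cut points $p,q$, I would first fix $n$ so large that every level-$n$ cell has diameter less than $|p-q|/2$, and pick $\i\in\D^n$ with $p\in\vp_{\i}(F)$; automatically $q\notin\vp_{\i}(F)$. Let $U,V$ be a separation of $F\setminus\{p\}$ with $q\in V$. To show $\vp_{\i}(F)\setminus\{p\}$ is disconnected, it suffices that both $\vp_{\i}(F)\cap U$ and $\vp_{\i}(F)\cap V$ are nonempty. For the $V$-side: iterating the level-$1$ non-fragility inside the prefixes of $\i$ shows $\vp_{\i}(F)\cap\bigcup_{\j\in\D^n\setminus\{\i\}}\vp_{\j}(F)$ contains at least two points, which connect $\vp_{\i}(F)$ to the rest of $F$ and in particular to the side containing $q$. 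For the $U$-side: when $p$ belongs to $\vp_{\i}(F)$ alone among all level-$n$ cells, a neighborhood of $p$ in $F$ lies inside $\vp_{\i}(F)$, so local connectedness of $F$ at $p$ (Hata's theorem) together with $p\in\overline U$ gives $\vp_{\i}(F)\cap U\neq\varnothing$.

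Once $\vp_{\i}(F)\setminus\{p\}$ is shown to be disconnected, the cut point $\widetilde p=\vp_{\i}^{-1}(p)$ differs from $p$ provided $p$ is not the fixed point of $\vp_{\i}$, a condition that can always be arranged since only finitely many points are fixed points of such maps. Re-applying the argument to the pair $(\widetilde p,p)$ (using $p$ now as the second cut point) yields a further new cut point, and inductive iteration produces an infinite family of pairwise distinct cut points in $F$.

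The main obstacle is the case in which $p$ belongs to more than one level-$n$ cell (i.e., $p$ has multiple addresses in $\D^\infty$), so that no cell $\vp_{\i}(F)$ containing $p$ is a neighborhood of $p$ in $F$ and the simple local-connectedness argument for $\vp_{\i}(F)\cap U\neq\varnothing$ fails. In this pathological scenario, the level-$n$ cells meeting at $p$ partition the local branches of $F\setminus\{p\}$; if moreover every such cell sits entirely in a single branch, one obtains a level-$n$ fragility of $F$ at $p$. The second cut point $q$, which lies outside all level-$n$ cells containing $p$, should then be used to lift this level-$n$ fragility to a level-$1$ fragility via a partition-lifting argument, contradicting the non-fragility hypothesis. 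This is the delicate step where both the non-fragility of $F$ and the assumption $\#C_F\geq 2$ (as opposed to $\#C_F=1$, which admits non-fragile examples) are essentially used.
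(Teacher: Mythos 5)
Your strategy is a pure pullback argument: find a cell $\vp_{\i}(F)$ containing a cut point $p$ that is itself disconnected by $p$, pull $p$ back through $\vp_{\i}^{-1}$, and iterate. The paper does establish exactly this pullback step (Lemma~\ref{lem:desiredcoding}: for non-fragile $F$ and $x\in C_F$ there is an address $\bi$ with $\vp_{\bi|_n}^{-1}(x)\in C_F$ for all $n$), but the pullback alone cannot prove the theorem, and this is the fatal gap in your plan: the points $\vp_{\bi|_n}^{-1}(p)$ are pairwise distinct only if the address of $p$ is not eventually periodic. Your claim that ``$\widetilde p\neq p$ can always be arranged since only finitely many points are fixed points of such maps'' is not available, because $\i$ is not free --- it must be a cell containing $p$ that $p$ actually disconnects --- and the cut point one can actually construct (Remark~\ref{rem:fixedptcutpt}, Corollary~\ref{cor:unicutptisfixpt}) is precisely a fixed point of some $\vp_\omega$, whose pullback orbit is a single point. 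More generally, if $p$ has an eventually periodic address the pullback orbit is finite, and re-applying the argument to the pair $(\widetilde p,p)$ may simply return $p$; nothing prevents the iteration from cycling among finitely many points. The paper's proof goes in the opposite direction: it takes a cut point $x$ fixed by some $\vp_\omega$ with $x\notin\vp_i(F)$ for $i\neq\omega$, takes a second cut point $y\neq x$, and \emph{pushes forward}, showing $\vp_\omega^n(y)\in C_F$ for all $n$; these are automatically distinct because $x$ is the unique fixed point of $\vp_\omega$. The hard content is the pushforward step (Proposition~\ref{prop:vpixisthenvpiyis}): knowing that $y$ disconnects $F$ only gives that $\vp_\omega(y)$ disconnects the small cell $\vp_\omega(F)$, and upgrading this to a global cut point of $F$ requires the entire vertical/horizontal/corner-like classification of complementary components (Propositions~\ref{prop:cornermeansinf}--\ref{prop:vpixisthenvpiyis}). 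Your proposal contains no substitute for this step.

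A secondary but genuine error: your argument that $\vp_{\i}(F)\setminus\{p\}$ is disconnected does not work. That $\vp_{\i}(F)$ meets $\bigcup_{\j\in\D^n\setminus\{\i\}}\vp_{\j}(F)$ in at least two points does not force $\vp_{\i}(F)$ to meet both sides $U$ and $V$ of the separation; both intersection points may lie in $U$, and for a generic cell containing $p$ the punctured cell lies entirely in one side. The correct route is the contrapositive you only gesture at in your last paragraph and which the paper uses in Lemma~\ref{lem:desiredcoding}: if no level-$1$ cell containing $p$ is disconnected by $p$, then the separation of $F\setminus\{p\}$ induces a decomposition $\D=\D_1\cup\D_2$ with $\bigl(\bigcup_{i\in\D_1}\vp_i(F)\bigr)\cap\bigl(\bigcup_{i\in\D_2}\vp_i(F)\bigr)=\{p\}$, contradicting non-fragility; this produces \emph{some} cell disconnected by $p$, not a cell of your choosing, which is exactly why the fixed-point obstruction above cannot be evaded.
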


Clearly, if two sets are homeomorphic then they have the same number of cut points. Therefore, the above theorem indicates that for a fragile GSC $F$ with $2\leq \#C_F<\infty$, $F$ is not homeomorphic to any non-fragile GSC.

The organization of this paper is as follows. In Section 2, we collect some preliminary results in~\cite{DLRWX21} which will be used later. In Section 3, we record several basic observations. Sections 4 and 5 are devoted to properties of essential cut vertices of Hata graphs and the proof of Theorem~\ref{thm:goodcp}, respectively. Section 6 presents the detailed proof (which is a bit complicated) of an auxiliary proposition in the proof of Theorem~\ref{thm:goodcp}. Finally, we construct some interesting examples, which demonstrate Theorem~\ref{thm:number}, in Section 7.1 and prove Theorem~\ref{thm:2orinfinity} in Section 7.2.

\section{Preliminary properties}


For reader's convenience, we invoke here some results in~\cite{DLRWX21} which will be used later.

\begin{lemma}[{\cite[Proposition 3.2]{DLRWX21}}]\label{lem:evenfragile}
    Suppose that there is some $m\geq 1$ such that $\D^m$ can be decomposed as $\D^m=I\cup J$ with $I\cap J=\varnothing$ and
    \[
        \Big( \bigcup_{\i\in I}\vp_{\i}(F) \Big) \cap \Big( \bigcup_{\i\in J}\vp_{\i}(F) \Big) = \{x\}
    \]
    for some $x\in F$. Then $F$ is fragile.
\end{lemma}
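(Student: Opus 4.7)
The natural approach is induction on $m$. The base case $m=1$ is immediate: a disjoint decomposition $\D=I\cup J$ with $(\bigcup_{\i\in I}\vp_\i(F))\cap(\bigcup_{\j\in J}\vp_\j(F))=\{x\}$ is precisely the definition of fragility. So I assume $m\geq 2$ and that the result holds for all smaller levels, and set $F_I=\bigcup_{\i\in I}\vp_\i(F)$, $F_J=\bigcup_{\j\in J}\vp_\j(F)$.

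Fix $i\in\D$. The first step is to show that either $\vp_i(F)\subset F_I$, or $\vp_i(F)\subset F_J$, or $F$ is fragile via a shorter decomposition. To do this, define $I_i'=\{\j\in\D^{m-1}:i\j\in I\}$ and $J_i'=\D^{m-1}\setminus I_i'$; then $\vp_i(F)=\bigcup_{\j\in I_i'}\vp_{i\j}(F)\cup\bigcup_{\j\in J_i'}\vp_{i\j}(F)$, with the two pieces sitting inside $F_I$ and $F_J$ respectively. Applying the similarity $\vp_i^{-1}$ converts this into a level-$(m-1)$ decomposition $\D^{m-1}=I_i'\cup J_i'$ of $F$ whose associated intersection lies in $\vp_i^{-1}(\{x\})$, hence is either empty or a single point. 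If both $I_i',J_i'$ are nonempty and the intersection is a genuine singleton, the inductive hypothesis applied at level $m-1$ immediately delivers the conclusion. If the intersection is empty, the connectedness of $F$ (guaranteed by hypothesis, since fragility is only defined for connected GSCs) forces one of $I_i',J_i'$ to be empty, so $\vp_i(F)$ lies entirely in $F_I$ or entirely in $F_J$.

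Assuming the second alternative holds for every $i\in\D$, set $\D_1=\{i\in\D:\vp_i(F)\subset F_I\}$ and $\D_2=\D\setminus\D_1$. Since each level-$1$ cell has positive diameter, the intersection $\D_1\cap\D_2$ must be empty (otherwise $\vp_i(F)\subset F_I\cap F_J=\{x\}$ for some $i$, impossible), and neither part can be empty either, because then $F$ would lie on a single side, forcing $F_I=\{x\}$ or $F_J=\{x\}$, again contradicting that each cell in $I$ or $J$ has positive diameter. Finally, $\bigcup_{i\in\D_1}\vp_i(F)\cap\bigcup_{i\in\D_2}\vp_i(F)\subset F_I\cap F_J=\{x\}$, and connectedness of $F$ rules out the empty intersection, so the inclusion is an equality. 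This is exactly a level-$1$ fragility decomposition and closes the induction. The only delicate point I foresee is the repeated invocation of connectedness---first to conclude that an empty level-$(m-1)$ intersection forces $I_i'$ or $J_i'$ to be empty, and later to guarantee nontriviality of the level-$1$ intersection---but both uses are standard and I do not anticipate a substantive obstacle.
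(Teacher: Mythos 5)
Your argument is correct. Note that this paper does not prove Lemma~\ref{lem:evenfragile} at all --- it imports the statement from \cite[Proposition~3.2]{DLRWX21} --- so there is no in-text proof to compare against; your induction on $m$ is a legitimate self-contained derivation. The key steps all check out: for each $i\in\D$ the splitting $\vp_i(F)=\bigcup_{\j\in I_i'}\vp_{i\j}(F)\cup\bigcup_{\j\in J_i'}\vp_{i\j}(F)$ has intersection contained in $F_I\cap F_J=\{x\}$, so after rescaling by $\vp_i^{-1}$ one gets a level-$(m-1)$ decomposition whose intersection is either empty (whence connectedness of $F$ forces $I_i'$ or $J_i'$ to be empty, i.e.\ $\vp_i(F)$ lies wholly in $F_I$ or $F_J$) or the singleton $\{\vp_i^{-1}(x)\}$ with $\vp_i^{-1}(x)\in F$ (whence the inductive hypothesis applies); and in the remaining case the sets $\D_1,\D_2$ give a genuine level-$1$ fragility decomposition because $I,J$ are both nonempty (the intersection $\{x\}$ is nonempty), cells have positive diameter, and connectedness of $F$ makes the level-$1$ intersection nonempty and hence equal to $\{x\}$. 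The only implicit hypothesis is connectedness of $F$, which, as you observe, is built into the notion of fragility and is the standing assumption in the source.
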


\begin{lemma}[{\cite[Proposition 3.7]{DLRWX21}}]\label{lem:casedisfact}
    Let $m,k\geq 1$ and let $\i,\j\in\D^m$ be two distinct words. If there exists exactly one pair of $\i',\j'\in\D^k$ such that
    $\vp_{\i\i'}(F) \cap \vp_{\j\j'}(F) \neq\varnothing$, then $\vp_{\i}(F)\cap\vp_{\j}(F)$ is a singleton.
\end{lemma}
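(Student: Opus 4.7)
The plan is to exploit the self-similar decomposition $F=\bigcup_{\alpha\in\D^k}\vp_\alpha(F)$ in order to contract the intersection iteratively. The first move is immediate: distributing the union and invoking the unique-pair hypothesis kills every term except $(\i',\j')$, so
\[
    \vp_\i(F)\cap\vp_\j(F)=\bigcup_{\alpha,\beta\in\D^k}\vp_{\i\alpha}(F)\cap\vp_{\j\beta}(F)=\vp_{\i\i'}(F)\cap\vp_{\j\j'}(F).
\]
Since both $\vp_{\i\i'}(F)$ and $\vp_{\j\j'}(F)$ are contained in axis-aligned squares of the same side length $N^{-(m+k)}$, call them $Q_{\i\i'}$ and $Q_{\j\j'}$, the intersection is already trapped in the common region $Q_{\i\i'}\cap Q_{\j\j'}$, which is either a shared corner (so the intersection is at most a point and we are done) or a shared edge.

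Setting $(\alpha_0,\beta_0):=(\i',\j')$, I then want to iterate: produce inductively letter-by-letter extensions $(\alpha_n,\beta_n)\in\D^{k+n}\times\D^{k+n}$ with $\alpha_{n+1}=\alpha_n t_{n+1}$ and $\beta_{n+1}=\beta_n s_{n+1}$ for some $t_{n+1},s_{n+1}\in\D$, such that
\[
    \vp_\i(F)\cap\vp_\j(F)=\vp_{\i\alpha_n}(F)\cap\vp_{\j\beta_n}(F) \quad \text{for every } n\geq0.
\]
If this construction goes through at every level, the diameters $N^{-(m+k+n)}\diam(F)$ shrink to $0$, and the nested compact intersection collapses to a single point, which is exactly the conclusion.

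The main obstacle is justifying the inductive step, namely showing that among the $|\D|^2$ candidate sub-pairs $(\alpha_n t,\beta_n s)$ at level $k+n+1$, exactly one yields a nonempty cell intersection; the hypothesis supplies this directly only at level $k$. My proposed strategy is by contradiction. If two such sub-pairs both gave nonempty intersections, one could extract two distinct points of $\vp_{\i\alpha_n}(F)\cap\vp_{\j\beta_n}(F)$, forcing the two equal-sized enclosing squares $Q_{\i\alpha_n}$ and $Q_{\j\beta_n}$ to share a full edge rather than merely a point. Propagating this upward, the ancestor squares $Q_{\i\i'}$ and $Q_{\j\j'}$, and hence $Q_\i$ and $Q_\j$ themselves, must share a full edge; the delicate step is then to argue that the row of boundary-touching level-$k$ digits of $\D$ along this shared edge must produce a second pair $(\alpha,\beta)\in\D^k\times\D^k$ with $\vp_{\i\alpha}(F)\cap\vp_{\j\beta}(F)\neq\varnothing$, contradicting uniqueness at level $k$. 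Making this geometric descent from a refined level back to level $k$ fully rigorous—tracking exactly which digit patterns force an extra intersecting pair—is where I expect the real work of the proof to lie, and this may well require invoking an auxiliary result in the spirit of Lemma~\ref{lem:evenfragile}.
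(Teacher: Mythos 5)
This lemma is quoted from \cite{DLRWX21} and not reproved in the present paper, so your attempt can only be judged against a correct completion of the argument. Your opening reduction is right: distributing $F=\bigcup_{\alpha\in\D^k}\vp_\alpha(F)$ and using the unique-pair hypothesis gives $\vp_{\i}(F)\cap\vp_{\j}(F)=\vp_{\i\i'}(F)\cap\vp_{\j\j'}(F)$, and the case where $Q_{\i\i'}$ and $Q_{\j\j'}$ meet only at a corner is immediate. But the edge case is where the lemma lives, and there your plan has a genuine gap that you yourself flag. The inductive step asks for \emph{exactly one} intersecting sub-pair at every deeper level, which is neither supplied by the hypothesis (it speaks only of level $k$ below $\i,\j$) nor true in general: even when the conclusion holds and the intersection is a single point $p$, that point can be a grid vertex at level $k+n+1$ and then up to four sub-pairs intersect (all at $p$). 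Consequently your contradiction strategy starts from a false inference --- ``two intersecting sub-pairs yield two distinct points of the intersection'' fails precisely in this situation --- and the subsequent propagation to a second pair at level $k$ does not go through. What you would actually need is the weaker statement that \emph{some} nested chain of sub-pairs captures the whole intersection at every level, but justifying that is essentially equivalent to already knowing the intersection is a point, so the induction is circular as set up.

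The standard (and non-inductive) way to close the edge case is a self-similarity fixed-point argument. If $Q_{\i\i'}$ and $Q_{\j\j'}$ share a full edge, then uniqueness of the pair forces the translation vector from $Q_{\i\i'}$ to $Q_{\j\j'}$ to be the scaled copy of the one from $Q_{\i}$ to $Q_{\j}$ (otherwise they would only share a corner, which is the other case), and a direct computation shows that the contracting similarity $g:=\vp_{\i}\circ\vp_{\i'}\circ\vp_{\i}^{-1}$ satisfies $g\circ\vp_{\j}=\vp_{\j}\circ\vp_{\j'}$ as well. Hence
\[
    g\bigl(\vp_{\i}(F)\cap\vp_{\j}(F)\bigr)=\vp_{\i\i'}(F)\cap\vp_{\j\j'}(F)=\vp_{\i}(F)\cap\vp_{\j}(F),
\]
so $\vp_{\i}(F)\cap\vp_{\j}(F)$ is a nonempty compact set invariant under a single contraction and is therefore the singleton consisting of the fixed point of $g$. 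Your proposal does not contain this (or any substitute) idea, so as written it does not prove the lemma.
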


\begin{lemma}[{\cite[Lemma 3.9]{DLRWX21}}]\label{lem:fourvertex}
    Let $\alpha\in\{(0,0),(N-1,0),(0,N-1),(N-1,N-1)\}$ and let $\i,\j\in\D^*$ with $|\i|=|\j|$. If $\vp_{\j}(F)\cap\vp_{\i\alpha}(F)\neq\varnothing$, then $\vp_{\i}(\frac{\alpha}{N-1})$, which is a vertex of the square $\vp_{\i}([0,1]^2)$, is an element of $\vp_{\i}(F)\cap\vp_{\j}(F)$.
\end{lemma}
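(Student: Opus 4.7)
The plan is to first establish $v := \vp_\i(\alpha/(N-1)) \in \vp_\i(F)$, and then to prove $v \in \vp_\j(F)$ through a case analysis of how the two cells $\vp_\i([0, 1]^2)$ and $\vp_\j([0, 1]^2)$ meet. The first claim is immediate from the IFS structure: since $\i\alpha \in \D^*$ forces $\alpha \in \D$, the point $\alpha/(N-1)$ is the unique fixed point of the contraction $\vp_\alpha$ belonging to the IFS, so $\alpha/(N-1) \in F$ and hence $v \in \vp_\i(F)$.

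For the second claim I would first reduce to the case $\i \neq \j$, the only non-trivial one. The cells $\vp_\i([0, 1]^2)$ and $\vp_\j([0, 1]^2)$, being distinct axis-aligned squares of equal size on a grid, intersect (if at all) in either a single shared vertex or a shared edge. Because $\vp_{\i\alpha}([0, 1]^2)$ sits in the $\alpha$-corner of $\vp_\i([0, 1]^2)$ and meets $\partial \vp_\i([0, 1]^2)$ only at $v$ and along the two edges through $v$, a short geometric check will show that the hypothesis $\vp_\j(F) \cap \vp_{\i\alpha}(F) \neq \varnothing$ forces any shared vertex to be $v$ and any shared edge to pass through $v$. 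In the vertex case one immediately has $\vp_\j(F) \cap \vp_{\i\alpha}(F) \subseteq \{v\}$, so non-emptiness gives $v \in \vp_\j(F)$ at once.

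The main case, and the main obstacle of the argument, is when the cells share an edge $E$ through $v$. By the dihedral symmetries of $[0, 1]^2$ it suffices to treat the representative situation $\alpha = (0, 0)$ with $E$ the bottom edge of $\vp_\i([0, 1]^2)$; then $\vp_\j([0, 1]^2)$ sits immediately below $\vp_\i([0, 1]^2)$ and $\vp_\j(y, 1) = \vp_\i(y, 0)$ along $E$. Any point of $\vp_\j(F) \cap \vp_{\i\alpha}(F)$ lies in $E \cap \vp_{\i\alpha}([0, 1]^2) = \vp_\i([0, 1/N] \times \{0\})$, so it equals $\vp_\j(y, 1)$ for some $y \in [0, 1/N]$ with $(y, 1) \in F$. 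Now the set $T := \{y \in [0, 1] : (y, 1) \in F\}$ is the attractor of the one-dimensional IFS $\{x \mapsto (x + k_1)/N : (k_1, N-1) \in \D\}$, whose minimum equals $\min\{k_1 : (k_1, N-1) \in \D\}/(N-1)$. For $T$ to meet $[0, 1/N]$ this minimum must equal $0$, that is, $(0, N-1) \in \D$; then $(0, 1)$ is the fixed point of $\vp_{(0, N-1)}$ and so lies in $F$, yielding $v = \vp_\j(0, 1) \in \vp_\j(F)$. The crux is that the mere existence of one intersection point in the shared corner segment forces a specific corner digit to lie in $\D$, and this corner digit is precisely what is needed to place $v$ itself in $\vp_\j(F)$.
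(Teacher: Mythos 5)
The paper imports this lemma verbatim from \cite{DLRWX21} (Lemma 3.9) and contains no proof of it, so there is no in-paper argument to compare yours against; I can only assess the proposal on its own terms, and it is correct. The reduction to the two ways distinct equal-size grid squares can meet is sound, the vertex case is immediate as you say, and the key step in the edge case — that a point of $\vp_{\j}(F)$ lying on the shared edge within the corner subsquare $\vp_{\i\alpha}([0,1]^2)$ forces $\min\{k_1:(k_1,N-1)\in\D\}=0$ via the identification of $\{y:(y,1)\in F\}$ with the attractor of the induced one-dimensional IFS, whence $(0,N-1)\in\D$ and its fixed point $(0,1)$ places $v=\vp_{\j}(0,1)$ in $\vp_{\j}(F)$ — checks out. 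Two points worth making explicit in a full write-up: (i) if no digit of the form $(k_1,N-1)$ lies in $\D$ then $T=\varnothing$, which already contradicts the hypothesis, so the "attractor" description needs that one-line degenerate case; (ii) the dihedral reduction is a relabeling of the coordinates and of $\D$ rather than a symmetry of $F$ itself, but since the whole argument is invariant under such relabelings this is harmless.
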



\begin{lemma}[{\cite[Lemma 4.6]{DLRWX21}}]\label{lem:equivalent}
    Let $n\geq 1$ and let $\j=j_1\cdots j_n\in\D^n$. Let $\omega,\tau\in\D\setminus\{j_1\}$. Then $\omega\D^{n-1}$ and $\tau\D^{n-1}$ belong to different connected components of $\Gamma_{n}-\{\j\}$ if and only if $\vp_{\omega}(F)$ and $\vp_\tau(F)$ belong to different connected components of $\bigcup_{\eta\in\D^n\setminus\{\j\}} \vp_{\eta}(F)$.
\end{lemma}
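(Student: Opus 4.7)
The plan is to reduce both sides of the equivalence to the same combinatorial condition: the existence of a chain of level-$n$ cells in $K:=\bigcup_{\eta\in\D^n\setminus\{\j\}}\vp_\eta(F)$ with nonempty consecutive intersections. Since $\omega,\tau\neq j_1$, no word in $\omega\D^{n-1}\cup\tau\D^{n-1}$ equals $\j$, so both vertex sets sit inside $\Gamma_n-\{\j\}$ and both cells $\vp_\omega(F),\vp_\tau(F)$ are subsets of $K$. The main tool is the standard point-set fact: if a Hausdorff space $X$ is a finite union $X=\bigcup_{i=1}^m X_i$ of closed connected subsets, then $X_a$ and $X_b$ lie in the same connected component of $X$ if and only if there exist indices $i_0=a,i_1,\ldots,i_p=b$ with $X_{i_s}\cap X_{i_{s+1}}\neq\varnothing$ for every $s$; one verifies this by checking that the equivalence classes of this chain relation partition $\{1,\ldots,m\}$ into disjoint closed connected pieces, which must therefore be the components of $X$.

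I would apply this fact twice. First, $\vp_\omega(F)=\bigcup_{\i'\in\D^{n-1}}\vp_{\omega\i'}(F)$ is connected (as a homeomorphic image of $F$), so the induced subgraph of $\Gamma_n$ on $\omega\D^{n-1}$ is connected; since $\j\notin\omega\D^{n-1}$ this subgraph already sits inside $\Gamma_n-\{\j\}$. Hence $\omega\D^{n-1}$ really does lie in a single component of $\Gamma_n-\{\j\}$, and likewise for $\tau\D^{n-1}$, which is what makes the statement of the lemma well-posed. Second, applying the fact to $K$ itself (whose pieces are closed and connected since each $\vp_\eta(F)$ is), one reads off that $\vp_\omega(F)$ and $\vp_\tau(F)$ lie in the same component of $K$ exactly when some pair $\vp_{\eta_0}(F)\subset\vp_\omega(F)$ and $\vp_{\eta_p}(F)\subset\vp_\tau(F)$ is linked by a chain $\vp_{\eta_0}(F),\ldots,\vp_{\eta_p}(F)$ of level-$n$ cells in $K$ whose consecutive intersections are nonempty.

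Such a chain is literally a walk $\eta_0,\ldots,\eta_p$ in $\Gamma_n-\{\j\}$ from $\omega\D^{n-1}$ to $\tau\D^{n-1}$: the edge relation in $\Gamma_n$ is precisely nonempty intersection of the corresponding cells, and each $\eta_s$ is different from $\j$ because it indexes a cell of $K$. Conversely any walk in $\Gamma_n-\{\j\}$ between these two vertex sets furnishes such a chain in $K$. Taking contrapositives yields the lemma. The argument is essentially bookkeeping once the above point-set lemma is isolated; the only mild subtlety is ensuring the chain stays within $K$ (i.e.\ never uses $\vp_\j(F)$), but this is automatic from how the chain is produced, so I do not foresee a substantive obstacle.
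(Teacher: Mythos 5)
Your argument is correct: the reduction of both sides to the existence of a chain of level-$n$ cells with nonempty consecutive intersections, via the standard fact about finite unions of closed connected sets, is exactly the natural route, and you have handled the two genuine subtleties (that $\omega\D^{n-1}$ and $\tau\D^{n-1}$ each lie in a single component because $\vp_\omega(F)$, $\vp_\tau(F)$ are connected and avoid the index $\j$, and that any chain automatically avoids $\vp_{\j}(F)$). Note that this paper does not prove the lemma itself but imports it from \cite[Lemma 4.6]{DLRWX21}, so there is no in-paper proof to compare against; your proof is a complete and standard justification of the cited result.
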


\begin{lemma}[{\cite[Corollary 4.7]{DLRWX21}}]\label{lem:nonfragilelast}
    Let $k\geq 1$ and let $\j\in\D^k$ be a cut vertex of $\Gamma_k$. Suppose there are $\omega,\tau\in\D$ such that $\omega\D^{k-1}$ and $\tau\D^{k-1}$ belong to different connected components of $\Gamma_k-\{\j\}$. Then $\omega\D^{q}$ and $\tau\D^q$ belong to different connected components of $\Gamma_{q+1}-\{\j|_{q+1}\}$ for all $0\leq q<k$.
\end{lemma}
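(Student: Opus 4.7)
The plan is to reduce the graph-theoretic conditions at levels $k$ and $q+1$ to statements about connected components of unions of cells via Lemma~\ref{lem:equivalent}, and then exploit an obvious monotonicity between the two cell unions. First, I would observe that the hypothesis forces $\omega,\tau\in\D\setminus\{j_1\}$, since otherwise $\omega\D^{k-1}$ or $\tau\D^{k-1}$ would contain the removed vertex $\j$. This lets me invoke Lemma~\ref{lem:equivalent} at level $k$ to rephrase the assumption as follows: $\vp_\omega(F)$ and $\vp_\tau(F)$ lie in different connected components of $A_k:=\bigcup_{\eta\in\D^k\setminus\{\j\}}\vp_\eta(F)$.

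Next, fix $0\leq q<k$ and define $A_{q+1}:=\bigcup_{\eta'\in\D^{q+1}\setminus\{\j|_{q+1}\}}\vp_{\eta'}(F)$. The key observation is the inclusion $A_{q+1}\subseteq A_k$: for any admissible $\eta'$, the decomposition $\vp_{\eta'}(F)=\bigcup_{\eta''\in\D^{k-q-1}}\vp_{\eta'\eta''}(F)$ expresses $\vp_{\eta'}(F)$ as a union of level-$k$ cells whose index words $\eta'\eta''$ all differ from $\j$ (because $\eta'\ne \j|_{q+1}$), and hence all lie in $A_k$.

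With $A_{q+1}\subseteq A_k$ in hand, I would argue by contradiction: if $\vp_\omega(F)$ and $\vp_\tau(F)$ lay in the same connected component of $A_{q+1}$, that common component would also be a connected subset of $A_k$ containing both cells, contradicting the translated hypothesis. Therefore $\vp_\omega(F)$ and $\vp_\tau(F)$ are in different connected components of $A_{q+1}$, and applying Lemma~\ref{lem:equivalent} in the reverse direction at level $q+1$, with $\j$ replaced by $\j|_{q+1}$, produces the desired conclusion about $\Gamma_{q+1}-\{\j|_{q+1}\}$.

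There is no real obstacle here, since the argument reduces to set-theoretic bookkeeping plus one application of Lemma~\ref{lem:equivalent} in each direction. The only point requiring care is ensuring that the side condition $\omega,\tau\in\D\setminus\{j_1\}$ required by Lemma~\ref{lem:equivalent} remains valid at the lower level, which it does because $\j|_{q+1}$ shares the first letter $j_1$ with $\j$.
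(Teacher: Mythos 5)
Your argument is correct. The paper itself quotes this lemma from \cite{DLRWX21} without reproducing a proof, so there is nothing internal to compare against, but your route --- translating both the level-$k$ hypothesis and the level-$(q+1)$ conclusion into statements about components of cell unions via Lemma~\ref{lem:equivalent}, and then using the containment $\bigcup_{\eta'\in\D^{q+1}\setminus\{\j|_{q+1}\}}\vp_{\eta'}(F)\subseteq\bigcup_{\eta\in\D^{k}\setminus\{\j\}}\vp_{\eta}(F)$ (any connected subset of the smaller union is a connected subset of the larger one) --- is sound, and the side conditions $\omega,\tau\in\D\setminus\{j_1\}$ are verified correctly at both levels.
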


\begin{lemma}[{\cite[Lemma 4.9]{DLRWX21}}]\label{lem:connected}
    Let $m\geq 2$ and let $B_1,\ldots,B_m$ be connected compact sets in $\R^2$ such that $\bigcup_{i=1}^m B_i$ is also connected. If $A\subset B_1$ satisfies that $B_1\setminus A$ remains connected and $A\cap B_i=\varnothing$ for all $i\neq 1$, then $(B_1\setminus A)\cup B_2\cup\cdots\cup B_m$ is also connected.
\end{lemma}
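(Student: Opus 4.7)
The plan is to reduce the statement to a standard fact: a finite family of connected sets whose intersection graph is connected has connected union. Form a graph $G$ on the vertex set $\{1,\ldots,m\}$ by placing an edge between $i$ and $j$ precisely when $B_i\cap B_j\neq\varnothing$. The first step is to verify that $G$ itself is connected. Were it not, its vertex set would split as $S\sqcup T$ with no edges across. Then $U:=\bigcup_{i\in S}B_i$ and $V:=\bigcup_{i\in T}B_i$ are disjoint, and since each $B_i$ is compact in $\R^2$ (hence closed), both $U$ and $V$ are finite unions of closed sets, therefore closed. This would exhibit a non-trivial separation of $\bigcup_{i=1}^m B_i$, contradicting the hypothesis. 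Hence $G$ is connected.

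Next I would put $C_1:=B_1\setminus A$ and $C_i:=B_i$ for $2\leq i\leq m$, and consider the intersection graph $G'$ of the family $\{C_i\}_{i=1}^m$. The key observation is that the hypothesis $A\cap B_i=\varnothing$ for all $i\neq 1$ gives
\[
    C_1\cap C_i=(B_1\setminus A)\cap B_i=B_1\cap B_i\qquad (2\leq i\leq m),
\]
and trivially $C_i\cap C_j=B_i\cap B_j$ for $i,j\geq 2$. Therefore $G'$ coincides with $G$ as a graph, and in particular $G'$ is connected.

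It remains to invoke the standard fact mentioned at the outset: each $C_i$ is connected (by hypothesis for $i=1$, trivially for $i\geq 2$), and their intersection graph $G'$ is connected, so $\bigcup_{i=1}^m C_i$ is connected. For completeness I would record the usual three-line argument: any non-trivial separation $\bigcup_i C_i=P\sqcup Q$ would force each connected $C_i$ to lie entirely in $P$ or entirely in $Q$; two $C_i,C_j$ joined by an edge of $G'$ must then be on the same side; since $G'$ is connected, this pushes every $C_i$ onto a single side, forcing the other to be empty. This yields the connectedness of $(B_1\setminus A)\cup B_2\cup\cdots\cup B_m$. The argument is essentially topological bookkeeping; the only place the hypotheses really bite is the first step, where compactness of the $B_i$ is used to guarantee that finite subunions are closed and can witness a separation. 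That is the sole subtle point.
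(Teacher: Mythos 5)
Your argument is correct and complete: the reduction to the intersection graph, the observation that $A\cap B_i=\varnothing$ for $i\neq 1$ forces $C_1\cap C_i=B_1\cap B_i$ so that the graph is unchanged after deleting $A$, and the standard separation argument (which, as you use it, needs no closedness of the $C_i$) together give the claim. Note that the paper does not prove this lemma at all --- it is quoted verbatim from \cite[Lemma 4.9]{DLRWX21} --- so there is no in-text proof to compare against; your intersection-graph proof is the natural self-contained one, and the only hypothesis-sensitive step is indeed the one you flag, namely that compactness makes the subunions closed so that disjointness yields a genuine separation.
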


\section{Basic observations on graphs and GSCs}

In the rest of this paper, $F=F(N,\D)$ is presumed to be any fixed connected non-fragile GSC. We will record in this section some useful observations.

\begin{lemma}\label{lem:graphcon}
    Let $G$ be a connected graph. If $v_0$ is a cut vertex of $G$, then every connected component of $G-\{v_0\}$ contains at least one vertex adjacent to $v_0$.
\end{lemma}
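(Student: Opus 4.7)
The plan is a direct path-tracing argument that exploits the connectedness of the ambient graph $G$. Let $C$ be an arbitrary connected component of $G-\{v_0\}$ and fix any vertex $u\in C$. First I would invoke the connectedness of $G$ to obtain a simple path $u = u_0, u_1, \ldots, u_k = v_0$ from $u$ to the cut vertex $v_0$; note that $k\geq 1$ since $u\neq v_0$.

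Next I would focus on the vertex $w := u_{k-1}$, which is adjacent to $v_0$ by construction of the path. The key observation is that the initial subpath $u_0, u_1, \ldots, u_{k-1}$ avoids $v_0$ entirely (since the chosen path is simple and $v_0$ appears only as the endpoint $u_k$), and therefore this subpath lies wholly inside the subgraph $G-\{v_0\}$. Consequently $u$ and $w$ are joined by a walk in $G-\{v_0\}$, which forces $w$ to belong to the same connected component as $u$, namely $C$. Thus $w\in C$ is the desired vertex of $C$ adjacent to $v_0$.

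There is essentially no obstacle here; this is a standard graph-theoretic observation and the only minor point to be careful about is to ensure that the chosen $u$-to-$v_0$ path is simple (or at least that its penultimate vertex is distinct from $v_0$), so that deleting $v_0$ does not disrupt the connection between $u$ and $w$. An alternative phrasing of the same argument would be by contradiction: if no vertex of $C$ were adjacent to $v_0$, then no edge of $G$ would connect $C$ to $\{v_0\}\cup\bigl((G-\{v_0\})\setminus C\bigr)$, contradicting the connectedness of $G$.
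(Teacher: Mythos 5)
Your proof is correct. Your primary argument (take a simple path from a vertex $u$ of the component $C$ to $v_0$; its penultimate vertex is adjacent to $v_0$ and, since the initial segment of the path avoids $v_0$, lies in $C$) is a direct construction, whereas the paper argues by contradiction: if $C$ contained no neighbor of $v_0$, then no edge of $G$ would join $C$ to its complement, so $G$ would be disconnected. The two arguments are essentially equivalent in content, and indeed the alternative phrasing you sketch at the end is exactly the paper's proof; the only point you rightly flag --- that the chosen path be simple so that $v_0$ occurs only as its endpoint --- is handled correctly.
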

\begin{proof}
    Suppose that there is some connected component $C$ of $G-\{v_0\}$ that contains no neighbors of $v_0$. Denote the vertex set of $C$ by $V_C$. Then for any $v\in V_C$ and $v'\notin V_C$, $v, v'$ are not adjacent. But this implies that $G$ has at least two connected components, which contradicts its connectedness.
\end{proof}

\begin{lemma}\label{lem:notacutpt}
    Let $(a,0)\in\D$ for some $0\leq a\leq N-1$. If $(a-1,0)\notin\D$ and $(a+1,0)\notin\D$, then $(a,0)$ is not a cut vertex of $\Gamma_1$.
\end{lemma}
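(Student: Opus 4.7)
The plan is to argue by contradiction, assuming $(a,0)$ is a cut vertex of $\Gamma_1$ and exhibiting a decomposition of $\D$ that violates the standing non-fragility of $F$.

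First, I would locate the possible $\Gamma_1$-neighbors of $(a, 0)$. Since $(a\pm 1, 0)\notin\D$ and the bottom side of the square $\vp_{(a, 0)}([0, 1]^2)$ lies on $\partial[0, 1]^2$, the set $N(a, 0)$ of $\Gamma_1$-neighbors of $(a, 0)$ is contained in $\{(a-1, 1), (a, 1), (a+1, 1)\}\cap\D$. Moreover, writing $p_L=(a/N, 1/N)$ and $p_R=((a+1)/N, 1/N)$, a diagonal neighbor can touch $\vp_{(a, 0)}(F)$ only at a single corner of $\vp_{(a,0)}([0,1]^2)$: $\vp_{(a, 0)}(F)\cap\vp_{(a-1, 1)}(F)\subseteq\{p_L\}$ and $\vp_{(a, 0)}(F)\cap\vp_{(a+1, 1)}(F)\subseteq\{p_R\}$.

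Suppose $(a, 0)$ is a cut vertex, and let $C_1,\dots,C_m$ ($m\ge 2$) be the components of $\Gamma_1-\{(a,0)\}$. By Lemma~\ref{lem:graphcon}, each $C_j$ meets $N(a, 0)$. The key step is to produce some $C_j$ with $C_j\cap N(a, 0)=\{(a-1, 1)\}$ or $\{(a+1, 1)\}$. Granted this, and assuming without loss of generality that $C_j\cap N(a,0)=\{(a-1,1)\}$, set $\D_1:=C_j$ and $\D_2:=\D\setminus C_j$. Since cells in distinct components of $\Gamma_1-\{(a,0)\}$ do not intersect and $(a-1, 1)$ is the only element of $C_j$ adjacent to $(a, 0)$ in $\Gamma_1$, we obtain
\[
\Big(\bigcup_{\i\in\D_1}\vp_\i(F)\Big)\cap\Big(\bigcup_{\i\in\D_2}\vp_\i(F)\Big)=\vp_{(a-1, 1)}(F)\cap\vp_{(a, 0)}(F)=\{p_L\},
\]
so $F$ is fragile, contradicting the hypothesis.

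A short case inspection of how $N(a, 0)$ can distribute over the components shows that such a $C_j$ exists in every scenario except the exceptional configuration where $m=2$, $(a-1, 1), (a+1, 1)\in C_1$, and $(a, 1)\in C_2$. I expect ruling out this configuration to be the main obstacle. My plan is as follows. Each corner of $[0, 1]^2$ is the fixed point of a suitable corner map --- $(0,0), (1,0), (0,1), (1,1)$ are fixed by $\vp_{(0, 0)}, \vp_{(N-1, 0)}, \vp_{(0, N-1)}, \vp_{(N-1, N-1)}$ respectively --- so a corner of $[0,1]^2$ belongs to $F$ if and only if the corresponding corner cell lies in $\D$. Applying this to $p_L\in\vp_{(a, 0)}(F)\cap\vp_{(a-1, 1)}(F)$ and $p_R\in\vp_{(a, 0)}(F)\cap\vp_{(a+1, 1)}(F)$ forces all four corner cells into $\D$. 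In particular $(0, 0), (N-1, 0)\in\D$, hence $(0, 0), (1, 0)\in F$, so $p_L=\vp_{(a, 1)}(0, 0)=\vp_{(a-1, 1)}(1, 0)$ lies in both $\vp_{(a, 1)}(F)$ and $\vp_{(a-1, 1)}(F)$. Thus $(a-1, 1)$ and $(a, 1)$ are adjacent in $\Gamma_1-\{(a,0)\}$, forcing them into the same component and contradicting the assumed placement.
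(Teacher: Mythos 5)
Your proof is correct and follows essentially the same route as the paper's: both arguments rest on the facts that a diagonal neighbor meets $\vp_{(a,0)}(F)$ in at most the shared corner (yielding a fragile decomposition, hence a contradiction) and that having both diagonal neighbors forces all four corner cells into $\D$, which makes $(a,1)$ adjacent to $(a\pm1,1)$ and kills the remaining configuration. The paper merely organizes the cases in the reverse order (first ruling out three neighbors, then reducing to exactly two), but the content is identical.
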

\begin{proof}
    Suppose on the contrary that $\Gamma_1-\{(a,0)\}$ is disconnected. By Lemma~\ref{lem:graphcon}, there are two neighbor vertices of $(a,0)$ which belong to different connected components of $\Gamma_1-\{(a,0)\}$. Note that $(a-1,1),(a,1),(a+1,1)$ are the only three possible vertices that might be adjacent to $(a,0)$.

    We claim that at least one of them is not a neighbor of $(a,0)$. Otherwise, since $\vp_{(a-1,1)}(F)\cap\vp_{(a,0)}(F)\neq\varnothing$ and $\vp_{(a+1,1)}(F)\cap\vp_{(a,0)}(F)\neq\varnothing$, it is easy to see that
    \[
        \{(0,0),(N-1,0),(0,N-1),(N-1,N-1)\} \subset \D.
    \]
    Equivalently, $\{(0,0),(1,0),(0,1),(1,1)\}\subset F$. So
    \[
        \vp_{(a-1,1)}(F) \cap \vp_{(a,1)}(F) \supset \{\vp_{(a-1,1)}((1,0))\} = \{\vp_{(a,1)}((0,0))\}
    \]
    and hence the vertices $(a-1,1),(a,1)$ are adjacent in $\Gamma_1$. Similarly, $(a,1)$ and $(a+1,1)$ are also adjacent. As a result, $(a,0)$ is not a cut vertex of $\Gamma_1$. This is a contradiction.

    The claim implies that $(a,0)$ has exactly two neighbors in $\Gamma_1$. Without loss of generality, assume them to be $(a-1,1)$ and $(a,1)$ (other two cases can be similarly discussed). Note that $\Gamma_1-\{(a,0)\}$ has exactly two connected components such that one contains $(a-1,1)$ and the other contains $(a,1)$. Denoting the vertex set of the former component by $\D_1$, we see that
    \[
        \Big( \bigcup_{i\in\D_1}\vp_i(F) \Big) \cap \Big( \bigcup_{i\in\D\setminus\D_1}\vp_i(F) \Big) = \Big( \bigcup_{i\in\D_1}\vp_i(F) \Big) \cap \vp_{(a,0)}(F) = \vp_{(a-1,1)}(F) \cap \vp_{(a,0)}(F),
    \]
    which is a singleton. As a result, $F$ is fragile and we again obtain a contradiction.
\end{proof}

\begin{lemma}\label{lem:iiawayfromj}
    Let $n\geq 1$ and let $\i\in\D^n$. Then $\vp_{\j}(F)\cap\vp_{\i\i}(F)=\varnothing$ for any $\j\in\D^n\setminus\{\i\}$. Moreover, we have for all $k\geq 2$ that
    \[
        \vp_{\j}(F) \cap \vp_{\i^k}(F)=\varnothing, \quad \forall \j\in\D^{(k-1)n}\setminus\{\i^{k-1}\}.
    \]
\end{lemma}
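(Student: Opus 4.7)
The plan is to prove the stronger \emph{geometric} statement that the closed squares $\vp_\j([0,1]^2)$ and $\vp_{\i\i}([0,1]^2)$ are disjoint, which yields the desired $F$-cell disjointness immediately (and requires no use of the connected or non-fragile hypotheses).

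For the first assertion, I would begin with the containment $\vp_{\i\i}([0,1]^2) \subset \vp_\i([0,1]^2)$ together with the standard fact that two distinct level-$n$ subsquares have disjoint interiors. Any point $x$ in the alleged intersection $\vp_{\i\i}([0,1]^2) \cap \vp_\j([0,1]^2)$ must therefore lie in $\vp_{\i\i}([0,1]^2) \cap \partial \vp_\i([0,1]^2)$. The key step is to show
\[
    \vp_{\i\i}([0,1]^2) \cap \partial \vp_\i([0,1]^2) \subset \partial [0,1]^2.
\]
Writing $x = \vp_\i(y)$ with $y \in \vp_\i([0,1]^2)$, the condition $x \in \partial\vp_\i([0,1]^2)$ forces $y \in \partial [0,1]^2$; and if (say) $y_1 = 0$, then the horizontal range of $\vp_\i([0,1]^2)$ meets $0$, so $\vp_\i(\mathbf{0})_1 = 0$, whence $x_1 = N^{-n} y_1 + \vp_\i(\mathbf{0})_1 = 0$ as well.

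Thus I may assume WLOG $x_1 = 0$, so $\vp_\j([0,1]^2)$'s left edge is also on $\{x_1 = 0\}$, forcing every digit of $\i$ and $\j$ to have first coordinate zero. Since $\i \neq \j$, the integers $m_\i := \sum_{l=1}^n N^{n-l}(i_l)_2$ and $m_\j$ (defined analogously) are distinct elements of $\{0, \ldots, N^n - 1\}$. The vertical range of $\vp_\j([0,1]^2)$ is $[m_\j N^{-n}, (m_\j+1)N^{-n}]$, while that of $\vp_{\i\i}([0,1]^2)$ is the much shorter $[m_\i N^{-n}(1+N^{-n}), m_\i N^{-n}(1+N^{-n}) + N^{-2n}]$. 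A short case analysis on the sign of $m_\j - m_\i$ (with care for the edge cases $m_\i \in \{0, N^n - 1\}$) shows these ranges are strictly disjoint, contradicting $x$ lying in both.

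For the moreover statement, I would reduce to the first part. Given $\j \in \D^{(k-1)n} \setminus \{\i^{k-1}\}$, write $\j = \eta_1 \cdots \eta_{k-1}$ with each $\eta_l \in \D^n$, and let $l_0 \in \{1, \ldots, k-1\}$ be the smallest index for which $\eta_{l_0} \neq \i$. Then
\[
    \vp_{\i^k}([0,1]^2) \subset \vp_{\i^{l_0-1}}\bigl(\vp_{\i\i}([0,1]^2)\bigr)
    \quad \text{and} \quad
    \vp_\j([0,1]^2) \subset \vp_{\i^{l_0-1}}\bigl(\vp_{\eta_{l_0}}([0,1]^2)\bigr),
\]
so applying the first part to $(\i, \eta_{l_0})$ and using injectivity of $\vp_{\i^{l_0-1}}$ gives disjointness of these two ambient squares, and hence of $\vp_{\i^k}([0,1]^2)$ and $\vp_\j([0,1]^2)$. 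The main technical burden is the arithmetic case analysis at the end of the first part, but the fact that $\vp_{\i\i}$'s cell is a factor $N^n$ shorter than $\vp_\j$'s provides enough slack for strict separation to hold in every subcase.
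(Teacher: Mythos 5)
Your proposal is correct, but it reaches the key disjointness by a genuinely different computation than the paper's. The paper reduces to level-$2n$ squares: it covers $\vp_{\j}(F)$ by the cells $\vp_{\j\j'}(F)$, $\j'\in\D^n$, and shows the Euclidean distance between the lower-left corners of $\vp_{\j\j'}([0,1]^2)$ and $\vp_{\i\i}([0,1]^2)$ strictly exceeds $\sqrt{2}N^{-2n}$ (the common diagonal), via a coordinatewise split of the corner difference into a ``first $n$ digits'' part $I$ and a ``last $n$ digits'' part $J$ and a sign analysis of the near-equality cases. You instead prove the stronger square-level statement $\vp_{\j}([0,1]^2)\cap\vp_{\i\i}([0,1]^2)=\varnothing$: the observation that $\vp_{\i\i}([0,1]^2)$ can meet $\partial\vp_{\i}([0,1]^2)$ only along $\partial[0,1]^2$ is correct (the left edge of $\vp_{\i\i}([0,1]^2)$ sits at abscissa $a(1+N^{-n})$ versus $a$ for $\vp_{\i}([0,1]^2)$, so they agree only when $a=0$, and similarly for the other three edges), and after the WLOG reduction to the leftmost column the one-dimensional interval comparison does close in every case: for $m_\j\ge m_\i+1$ one needs $(m_\i+1)N^{-2n}<N^{-n}$, which holds because $m_\i\le N^n-2$ is forced there, and for $m_\j\le m_\i-1$ one needs $m_\i N^{-2n}>0$, which holds because $m_\i\ge 1$ is forced. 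Your route buys a cleaner geometric picture (the second iterate retreats strictly from every face of $\vp_{\i}([0,1]^2)$ not lying on $\partial[0,1]^2$) and a marginally stronger conclusion, at the cost of a reflection-symmetry reduction; the paper's is more mechanical but handles both coordinates uniformly without any positional case split. Your handling of the ``moreover'' clause --- factoring out $\vp_{\i^{l_0-1}}$ at the first block where $\j$ departs from $\i^{k-1}$ and applying the first part to $(\i,\eta_{l_0})$ --- is the same reduction the paper uses.
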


In fact, this is a simple geometric observation. However, to avoid drawing too many figures, we present here a proof based on standard computation.


\begin{proof}
    For the first statement, write $\i=i_1\cdots i_n$. Since $F\subset[0,1]^2$, it suffices to show that
    \[
        \vp_{j_1\cdots j_{2n}}([0,1]^2) \cap \vp_{\i\i}([0,1]^2) = \varnothing
    \]
    for any $j_1\cdots j_{2n}\in\D^{2n}$ with $j_1\cdots j_n\neq\i$. Since the above two squares are both of side length $N^{-2n}$, it suffices to show that $|\vp_{j_1\cdots j_{2n}}((0,0))-\vp_{\i\i}((0,0))|>\sqrt{2}N^{-2n}$.

    Writing $i_k=(i_k^1, i_k^2)$ and $j_k=(j_k^1,j_k^2)$ for all $k$,
    \begin{align*}
        &|\vp_{j_1\cdots j_{2n}}((0,0)) - \vp_{\i\i}((0,0))| \\
        =& \sqrt{\Big( \sum_{k=1}^n\frac{j_k^1-i_k^1}{N^{k}} + \sum_{k=n+1}^{2n}\frac{j^1_k-i^1_{k-n}}{N^k}\Big)^2 + \Big( \sum_{k=1}^n\frac{j_k^2-i_k^2}{N^{k}} + \sum_{k=n+1}^{2n}\frac{j^2_k-i^2_{k-n}}{N^k}\Big)^2} \\
        =:& \sqrt{(I_1+J_1)^2+(I_2+J_2)^2}.
    \end{align*}
    It is easy to see that: (1) $|I_1|\geq N^{-n}$; (2) $|J_1|\leq N^{-n}(1-N^{-n})$. Moreover, if the inequality in (2) is an equality, then either $j_k^1=N-1,i_{k-n}^1=0$ for $n+1\leq k\leq 2n$, or $j_k^1=0,i_{k-n}^1=N-1$ for $n+1\leq k\leq 2n$. Since $0\leq j_k^1\leq N-1$ for all $k$, $I_1$ and $J_1$ must have the same sign if the inequality in (2) is an equality. Thus if the inequalities in (1) and (2) are both equalities, then $|I_1+J_1|\geq |I_1|=N^{-n}$. But if one of them is a strict inequality, then we have $|I_1+J_1|\geq |I_1|-|J_1|>N^{-2n}$. To conclude, we always have $|I_1+J_1|>N^{-2n}$. Similarly, $|I_2+J_2|>N^{-2n}$. Therefore,
    \[
        |\vp_{j_1\cdots j_{2n}}((0,0)) - \vp_{\i\i}((0,0))| > \sqrt{N^{-4n}+N^{-4n}} = \sqrt{2}N^{-2n},
    \]
    as desired.

    The second statement is a direct consequence of the first one. To see this, fix any word $\j=\j_1 \j_2\cdots\j_{k-1}\in\D^{(k-1)n}\setminus\{\i^{(k-1)}\}$, where $\j_p\in \D^n$ for all $1\leq p\leq k-1$. Let $q=\min\{1\leq p\leq k-1: \j_p\neq\i\}$. Then
    \begin{align*}
        \vp_{\j}(F) \cap \vp_{\i^k}(F) =  \vp_{\i^{q-1}}(\vp_{\j_q\cdots\j_{k-1}}(F) \cap \vp_{\i^{k-q+1}}(F))
        \subset \vp_{\i^{q-1}}(\vp_{\j_q}(F)\cap\vp_{\i\i}(F)) = \varnothing,
    \end{align*}
    with the convention that $\vp_{\i^0}=\id$ (the identity map). 
\end{proof}

\section{Essential cut vertices of Hata graphs}

To examine the value $\chi(\Gamma_n)$, we will transform the problem to the detection of the existence of ``essential cut vertices'' defined as follows.

\begin{definition}\label{de:goodcp}
    Let $n\geq 2$ and let $\bm{i}=i_1\cdots i_n\in\D^n$ be a cut vertex of $\Gamma_n$. We call $\bm{i}$ \emph{essential} if there are $i,j\in\D\setminus\{i_1\}$ such that $i\D^{n-1}$ and $j\D^{n-1}$ belong to different connected components of $\Gamma_n-\{\i\}$. As a matter of convenience, we also call every cut vertex of $\Gamma_1$ essential.
\end{definition}

We remark that  by Lemma~\ref{lem:equivalent}, $\bm{i}=i_1\cdots i_n\in\D^n$ is essential if and only if there are $i,j\in\D\setminus\{i_1\}$ such that $\vp_i(F)$ and $\vp_j(F)$ belong to different components of $\bigcup_{w\in\D\setminus\{i_1\}}\vp_w(F)$.

\begin{example}
    The $2$-nd Hata graph associated with the connected GSC in Figure~\ref{fig:exa_goodcp} has exactly five cut vertices
    \[
        (0,2)(0,2),\, (0,2)(2,2),\, (1,0)(1,0),\, (2,2)(0,2),\, (2,2)(2,2).
    \]
    It is easy to see that $(1,0)(1,0)$ is an essential cut vertice while others are not.
    \begin{figure}[htbp]
        \centering
        \includegraphics[height=4cm]{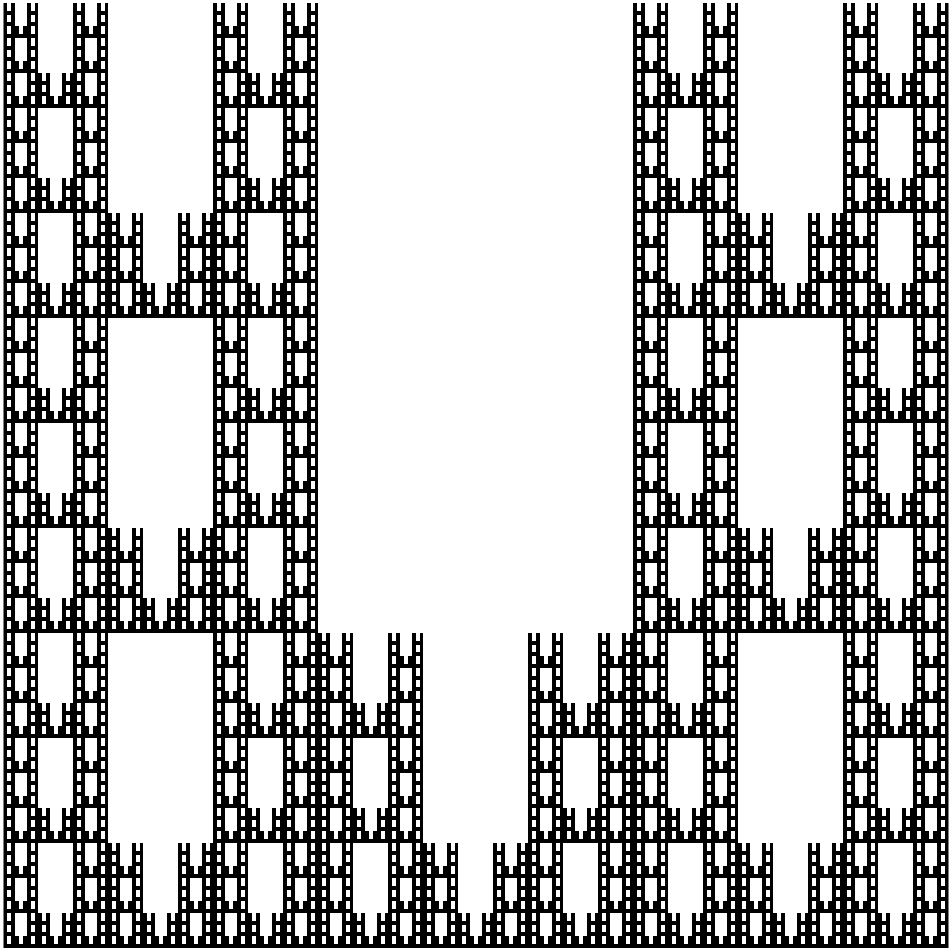} \quad
        \includegraphics[height=4cm]{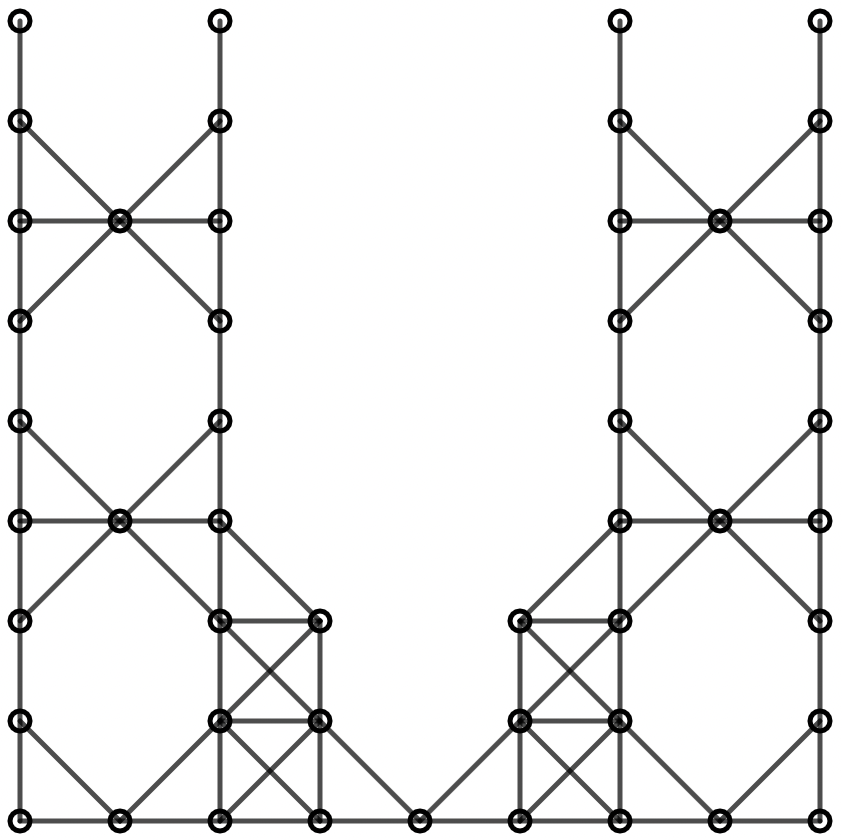}
        \caption{A connected GSC, where $N=3$ and $\D=\{0,1,2\}^2\setminus\{(1,1),(1,2)\}$.}
        \label{fig:exa_goodcp}
    \end{figure}
\end{example}

\begin{lemma}\label{lem:ingoodthenin1}
    Let $n\geq 1$. If $\i\in\D^n$ is an essential cut vertex of $\Gamma_n$, then $\i|_k$ is an essential cut vertex of $\Gamma_k$ for all $1\leq k\leq n$.
\end{lemma}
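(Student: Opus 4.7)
The plan is essentially a direct invocation of Lemma~\ref{lem:nonfragilelast}. Since $\i=i_1\cdots i_n$ is assumed to be an essential cut vertex of $\Gamma_n$, by Definition~\ref{de:goodcp} there exist $i,j\in\D\setminus\{i_1\}$ such that $i\D^{n-1}$ and $j\D^{n-1}$ lie in different connected components of $\Gamma_n-\{\i\}$. In particular, $\i$ itself is a cut vertex of $\Gamma_n$, so the hypotheses of Lemma~\ref{lem:nonfragilelast} are fully met by the triple $(\i,i,j)$.

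Applying Lemma~\ref{lem:nonfragilelast}, for every $0\leq q<n$ the sets $i\D^q$ and $j\D^q$ belong to different connected components of $\Gamma_{q+1}-\{\i|_{q+1}\}$. Setting $q=k-1$ for each $1\leq k\leq n$, this says that $i\D^{k-1}$ and $j\D^{k-1}$ are separated in $\Gamma_k-\{\i|_k\}$. Consequently $\Gamma_k-\{\i|_k\}$ is disconnected, so $\i|_k$ is a cut vertex of $\Gamma_k$.

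It remains to observe that the essentiality condition is automatically inherited. For $k\geq 2$ the first letter of $\i|_k$ is still $i_1$, and the witnessing pair $i,j\in\D\setminus\{i_1\}$ is exactly the pair we started with, so $\i|_k$ satisfies Definition~\ref{de:goodcp} verbatim. For $k=1$, the convention in Definition~\ref{de:goodcp} declares every cut vertex of $\Gamma_1$ to be essential, so nothing further is needed. There is no real obstacle here: the entire argument is a one-step reduction to Lemma~\ref{lem:nonfragilelast}, and the only thing worth being careful about is that the pair $(i,j)$ witnessing essentiality at level $n$ keeps playing the same role at all lower levels, which it does because it is fixed throughout and avoids the first coordinate $i_1$ that is common to every prefix.
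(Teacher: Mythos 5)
Your proof is correct and is exactly the paper's argument: the paper proves this lemma in one line as "a direct consequence of Lemma~\ref{lem:nonfragilelast}," and your write-up simply makes explicit how the witnessing pair $i,j\in\D\setminus\{i_1\}$ persists at every level $k$. No issues.
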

\begin{proof}
   This is a direct consequence of Lemma~\ref{lem:nonfragilelast}.
\end{proof}

When $F$ is non-fragile, it has been proved in~\cite[Theorem 1.11]{DLRWX21} that $F$ has cut points if and only if $\chi(\Gamma_n)\geq |\D|^{n-1}$ for all $n\geq 2$. The reason we turn to finding essential cut vertices is based on the following simple observation.

\begin{lemma}\label{lem:pra1}
    For $n\geq 2$, $\chi(\Gamma_n)\geq|\D|^{n-1}$ if and only if $\Gamma_n$ has an essential cut vertex. As a result, the GSC $F$ has cut points if and only if $\Gamma_n$ has essential cut vertices for all $n\geq 2$.
\end{lemma}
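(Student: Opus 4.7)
The plan is to exploit the natural block partition $\D^n=\bigcup_{\ell\in\D}B_\ell$, where $B_\ell:=\ell\D^{n-1}$. The starting observation is that the subgraph of $\Gamma_n$ induced by $B_\ell$ is isomorphic to $\Gamma_{n-1}$ via the map $\ell\j\mapsto\j$, because $\vp_{\ell\i}(F)\cap\vp_{\ell\j}(F)=\vp_\ell(\vp_\i(F)\cap\vp_\j(F))$; since $F$ is connected every Hata graph is connected, so each $B_\ell$ is a connected subgraph of $\Gamma_n$ of cardinality exactly $|\D|^{n-1}$.

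For the ``if'' direction, suppose $\i=i_1\cdots i_n$ is an essential cut vertex, witnessed by $i,j\in\D\setminus\{i_1\}$ whose blocks $B_i,B_j$ lie in different components of $\Gamma_n-\{\i\}$. Since $\i\notin B_\ell$ whenever $\ell\neq i_1$, each of $B_i$ and $B_j$ remains a connected subgraph of $\Gamma_n-\{\i\}$, so both containing components have size at least $|\D|^{n-1}$, whence $\chi(\Gamma_n)\geq|\D|^{n-1}$. For the ``only if'' direction, take a cut vertex $\i=i_1\cdots i_n$ with $|G_2(\i)|\geq|\D|^{n-1}$. For every $\ell\neq i_1$, $B_\ell$ is disjoint from $\{\i\}$ and connected, so it lies in a single component of $\Gamma_n-\{\i\}$; on the other hand $|B_{i_1}\setminus\{\i\}|=|\D|^{n-1}-1$. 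Consequently a component of size at least $|\D|^{n-1}$ cannot consist solely of vertices from $B_{i_1}\setminus\{\i\}$, and must therefore contain at least one full block $B_\ell$ with $\ell\neq i_1$. Applying this to both $G_1(\i)$ and $G_2(\i)$ and noting that distinct components cannot share a block, I extract two distinct indices $i,j\in\D\setminus\{i_1\}$ whose blocks fall in different components, certifying that $\i$ is essential.

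The ``as a result'' clause follows immediately by combining the just-proved equivalence with the cited result of \cite{DLRWX21} quoted right before the lemma, namely that a non-fragile connected GSC $F$ has cut points iff $\chi(\Gamma_n)\geq|\D|^{n-1}$ for every $n\geq 2$. I do not foresee a real obstacle here; the argument is in essence bookkeeping on block sizes, and the only mildly delicate point is the strict inequality $|B_{i_1}\setminus\{\i\}|<|\D|^{n-1}$, which is exactly what forces every sufficiently large component of $\Gamma_n-\{\i\}$ to absorb at least one outer block.
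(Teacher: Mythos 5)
Your argument is correct and follows essentially the same route as the paper: the ``if'' direction is the identical observation that the two blocks $i\D^{n-1}$, $j\D^{n-1}$ force two components of size at least $|\D|^{n-1}$. For the ``only if'' direction the paper simply cites \cite[Lemma 4.5]{DLRWX21}, whereas you supply a self-contained proof of exactly that step via the block partition $\D^n=\bigcup_{\ell\in\D}\ell\D^{n-1}$ and the count $|i_1\D^{n-1}\setminus\{\i\}|=|\D|^{n-1}-1$; this is a valid (and nicely explicit) replacement for the citation, resting only on the standard fact that each induced block subgraph is a connected copy of $\Gamma_{n-1}$.
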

\begin{proof}
    Suppose $\chi(\Gamma_n)\geq |\D|^{n-1}$ and let $\i=i_1\cdots i_n$ be the vertex of $\Gamma_n$ achieving $\chi(\Gamma_n)$. By~\cite[Lemma 4.5]{DLRWX21}, there are $i,j\in\D\setminus\{i_1\}$ such that $i\D^{n-1}$ and $j\D^{n-1}$ belong to different connected components of $\Gamma_n-\{\i\}$. So $\i$ is an essential cut vertex of $\Gamma_n$.

    Conversely, suppose that $\Gamma_n$ has an essential cut vertex $\j=j_1\cdots j_n$. By definition, there are two digits $i',j'\in\D\setminus\{j_1\}$ such that $i'\D^{n-1}$ and $
    j'\D^{n-1}$ belong to different connected components of $\Gamma_n-\{\j\}$. Therefore, $\chi(\Gamma_n)\geq \min\{|i'\D^{n-1}|, |j'\D^{n-1}|\} = |\D|^{n-1}$.
\end{proof}


The following result provides us with a convenient suffient condition for a connected GSC to have cut points. In many circumtances, there is a cut vertex $i$ of $\Gamma_1$ satisfying the required conditions and hence the GSC has cut points.

\begin{proposition}\label{prop:twogoodmeanscutpt}
    Let $n\geq 1$ and let $\i$ be a cut vertex of $\Gamma_n$. Suppose that there are $\Lambda,\Lambda'\subset\D^n$ satisfying the following conditions:
    \begin{enumerate}
        \item $\Lambda\cup\Lambda'=\D^n\setminus\{\i\}$ and $\Lambda\cap\Lambda'=\varnothing$;
        \item Writing $X_1=\bigcup_{\j\in\Lambda}\vp_{\j}(F)$ and $X'_1=\bigcup_{\j\in\Lambda'}\vp_{\j}(F)$, we have $X_1\cap X'_1=\varnothing$;
        \item Both of $\vp_{\i}(X_1)$ and $\vp_{\i}(X'_1)$ cannot intersect $X_1$ and $X'_1$ simultaneously;
        \item There are $i,j\in\D$ such that $\vp_i(F)\subset X_1$ and $\vp_j(F)\subset X'_1$.
    \end{enumerate}
    Then $\i^k$ is an essential cut vertice of $\Gamma_{kn}$ for all $k\geq 1$.
\end{proposition}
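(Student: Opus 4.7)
The plan is to prove the proposition by an explicit inductive construction. For each $k\geq 1$, I will produce disjoint subsets $X_k,X'_k$ of $F$, each a union of level-$kn$ cells, satisfying
\[
    X_k\cup X'_k=\bigcup_{\eta\in\D^{kn}\setminus\{\i^k\}}\vp_\eta(F),\qquad \vp_i(F)\subset X_k,\qquad \vp_j(F)\subset X'_k.
\]
The base case $k=1$ is exactly the data supplied in the hypotheses. Once such a family is in hand, the proposition will follow quickly: $X_k\cap X'_k=\varnothing$ means no edge of $\Gamma_{kn}$ joins a vertex $\eta$ with $\vp_\eta(F)\subset X_k$ to one with $\vp_\eta(F)\subset X'_k$, so $\i^k$ is a cut vertex. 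Moreover $i,j\in\D\setminus\{i_1\}$: if $i=i_1$ then $\vp_{i_1}(F)\supset\vp_\i(F)$ would force the level-$n$ cell $\vp_\i(F)\subset X_1$, i.e.\ $\i\in\Lambda$, contradicting $\i\notin\Lambda\cup\Lambda'$. Applying Lemma~\ref{lem:equivalent} to $\j=\i^k$ will then place $i\D^{kn-1}$ and $j\D^{kn-1}$ in different components of $\Gamma_{kn}-\{\i^k\}$, which is precisely essentialness.

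For the inductive step ($k\geq 2$), I would decompose $\vp_{\i^{k-1}}(F)=\vp_{\i^{k-1}}(X_1)\cup\vp_{\i^{k-1}}(X'_1)\cup\vp_{\i^k}(F)$, so the task reduces to deciding how to attach the two new pieces $\vp_{\i^{k-1}}(X_1)$ and $\vp_{\i^{k-1}}(X'_1)$ to $X_k$ or $X'_k$. The central computation uses Lemma~\ref{lem:iiawayfromj}: since no level-$(k-2)n$ cell other than $\vp_{\i^{k-2}}(F)$ can meet $\vp_{\i^{k-1}}(F)$, the intersection of $\vp_{\i^{k-1}}(F)$ with $X_{k-1}$ (resp.\ $X'_{k-1}$) is confined to $\vp_{\i^{k-2}}(F)$ and equals $\vp_{\i^{k-2}}$ applied to either $\vp_\i(F)\cap X_1$ or $\vp_\i(F)\cap X'_1$, with the assignment fixed by the previous inductive choice. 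Using the injectivity of $\vp_{\i^{k-2}}$, each of the four potential collisions $\vp_{\i^{k-1}}(U)\cap Y$---where $U\in\{X_1,X'_1\}$ and $Y\in\{X_{k-1},X'_{k-1}\}$---is the $\vp_{\i^{k-2}}$-image of some $\vp_\i(U)\cap V$ with $V\in\{X_1,X'_1\}$. Hence the problem at level $kn$ takes exactly the same form as the original problem at level $2n$.

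The hard part is then a short case analysis showing that at least one of the two attachments preserves $X_k\cap X'_k=\varnothing$. If both failed simultaneously, one of the following alternatives would hold: either (i)~some $\vp_\i(U)$ with $U\in\{X_1,X'_1\}$ meets both $X_1$ and $X'_1$, contradicting condition~(3); or (ii)~both $\vp_\i(X_1)$ and $\vp_\i(X'_1)$ miss a common $V\in\{X_1,X'_1\}$. In case (ii), Lemma~\ref{lem:iiawayfromj} gives $\vp_{\i^2}(F)\cap V=\varnothing$, whence $\vp_\i(F)\cap V=\varnothing$, and this disconnects $F$, contradicting the connectedness of $F$. Hence a valid choice always exists; the induction runs, and the proposition follows.
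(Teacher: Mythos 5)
Your proposal is correct, and it reorganizes the paper's argument rather than reproducing it. The paper fixes the two components of $E_k=\bigcup_{\j\in\D^{kn}\setminus\{\i^k\}}\vp_{\j}(F)$ in advance by splitting condition (3) into three subcases and writing down an explicit formula in each (in particular, subcase (3-3) requires the non-obvious alternating regrouping $Y_k=X_{2k-1}\cup X'_{2k}$); you instead run a greedy induction, attaching $\vp_{\i^{k-1}}(X_1)$ and $\vp_{\i^{k-1}}(X'_1)$ to whichever side they do not collide with, and your reduction of every collision to one of the four level-$2n$ intersections $\vp_{\i}(U)\cap V$ via Lemma~\ref{lem:iiawayfromj} is exactly the paper's equation~\eqref{eq:xkcapek}. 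Two remarks on the details. First, your alternative (ii) is not actually a failure mode of the attachment: since only two sets $\vp_{\i^{k-2}}(X_1),\vp_{\i^{k-2}}(X'_1)$ can meet $\vp_{\i^{k-1}}(F)$, a piece collides with both sides only if some $\vp_{\i}(U)$ meets both $X_1$ and $X'_1$, i.e.\ only in case (i); configuration (ii) is the paper's subcase (3-1), which the paper resolves without connectedness by observing that $X_1$ is then isolated in every $E_k$ (so the conclusion still holds), whereas you rule it out entirely using the standing connectedness assumption --- both are legitimate, and your route buys a uniform induction at the cost of invoking connectedness. Second, your justification that $i\neq i_1$ is slightly off: $\vp_{i_1}(F)\subset X_1$ does not formally yield $\i\in\Lambda$; the clean argument is that $\vp_{\i\i}(F)\cap E_1=\varnothing$ by Lemma~\ref{lem:iiawayfromj}, so $\vp_{\i}(F)\not\subset X_1$ and hence $\vp_{i_1}(F)\not\subset X_1$. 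These are presentational, not substantive, gaps.
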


Note that by Lemma~\ref{lem:equivalent}, the fourth condition above indicates that $\i$ is essential.

\begin{proof}
    For $k\geq 1$, let $E_k=\bigcup_{\j\in\D^{kn}\setminus\{\i^k\}}\vp_{\j}(F)$, $X_k=\vp_{\i^{k-1}}(X_1)$ and $X'_k=\vp_{\i^{k-1}}(X'_1)$. By the conditions (1) and (2), $E_1=X_1\cup X'_1$. Note that
    \begin{align*}
        E_k &= \Big( \bigcup_{\j\in\D^{kn}\setminus\{\i^k\},\i^{k-1}\nprec\j}\vp_{\j}(F) \Big) \cup \Big( \bigcup_{\j\in\D^{kn}\setminus\{\i^k\},\i^{k-1}\prec\j}\vp_{\j}(F) \Big) \\
        &= \Big( \bigcup_{\j\in\D^{(k-1)n}\setminus\{\i^{k-1}\}}\vp_{\j}(F) \Big) \cup \vp_{\i^{k-1}}\Big( \bigcup_{\j\in\D^n\setminus\{\i\}}\vp_{\j}(F) \Big) \\
        &= E_{k-1} \cup \vp_{\i^{k-1}}(E_1) = E_{k-1} \cup (X_k\cup X'_k).
    \end{align*}
    By an induction argument, it is easy to see that $E_k=\bigcup_{t=1}^k (X_t\cup X'_t)$ for $k\geq 1$. Write $\mathscr{C}_k=\bigcup_{t=1}^k X_t$ and $\mathscr{C}'_k=\bigcup_{t=1}^{k} X'_t$. It is clear that $\mathscr{C}_k, \mathscr{C}'_k$ are both compact sets and $E_k=\C_k\cup\C'_k$.
    We also have
    \begin{equation}\label{eq:xkandxkprime}
        X_{k+1}\cap X'_{k+1}=\vp_{\i^k}(X_1\cap X'_1)=\varnothing, \quad \forall k\geq 1.
    \end{equation}
    Moreover, we have by Lemma~\ref{lem:iiawayfromj} that for all $k\geq 2$ and $q\geq k+1$,
    \begin{align}
        (X_{q}\cup X'_{q}) \cap E_{k-1} &= (X_{q} \cup X'_{q})\cap \bigcup_{\j\in\D^{(k-1)n}\setminus\{\i^{k-1}\}}\vp_{\j}(F) \notag\\
        &\subset \vp_{\i^{k}}(F) \cap \bigcup_{\j\in\D^{(k-1)n}\setminus\{\i^{k-1}\}}\vp_{\j}(F) = \varnothing. \label{eq:xkcapek}
    \end{align}

    Without loss of generality, the condition (3) can be divided into three cases:
    \begin{enumerate}
        \item[(3-1)] $\vp_{\i}(X_1)\cap X_1=\varnothing$ and  $\vp_{\i}(X'_1)\cap X_1=\varnothing$;
        \item[(3-2)] $\vp_{\i}(X_1)\cap X_1'=\varnothing$ and $\vp_{\i}(X'_1)\cap X_1=\varnothing$;
        \item[(3-3)] $\vp_{\i}(X_1)\cap X_1=\varnothing$ and $\vp_{\i}(X'_1)\cap X_1'=\varnothing$.
    \end{enumerate}

    \textbf{Case 1}. We have (3-1), i.e., $X_1\cap X_2=\varnothing$ and $X_1\cap X'_2=\varnothing$. In this case,
    \[
        X_1 \cap (X_2\cup \C'_2) = (X_1\cap X_2) \cup (X_1\cap X'_1) \cup (X_1\cap X'_2) = \varnothing.
    \]
    It then follows from~\eqref{eq:xkcapek} that for $k\geq 3$,
    \begin{align*}
        X_1 \cap \Big( \C'_k \cup \bigcup_{t=2}^k X_t \Big) = X_1 \cap \Big( \bigcup_{t=3}^k X_t\cup X'_t \Big) \subset E_1 \cap \Big( \bigcup_{t=3}^k X_t\cup X'_t \Big) = \varnothing.
    \end{align*}
    Combining this with $E_k=X_1 \cup \big(\C'_k\cup \bigcup_{t=2}^k X_t \big)$ and the condition (4), $\i^{k}$ is an essential cut vertex of $\Gamma_{kn}$.

    \textbf{Case 2}. We have (3-2), i.e., $X_1\cap X'_2=\varnothing$ and $X'_1\cap X_2=\varnothing$. We will show by induction that $\mathscr{C}_k\cap\mathscr{C}'_k=\varnothing$ for all $k\geq 2$. Then combining this with $E_k=\C_k\cup\C'_k$ and the condition (4), $\i^k$ is an essential cut vertex of $\Gamma_{kn}$.

    Combining the condition (1) and (3-2), we have $\C_1\cap\C'_1=\varnothing$ and $\C_2\cap\C'_2=\varnothing$. Suppose we have shown that  $\mathscr{C}_t\cap\mathscr{C}'_t=\varnothing$ for $1\leq t\leq k$. Then from~\eqref{eq:xkandxkprime},
    \[
        \C_{k+1} \cap \C'_{k+1} = (X_{k+1}\cap \C'_k) \cup ( X'_{k+1}\cap\C_k ).
    \]
    Note that
    \[
        X_{k+1}\cap \C'_k = X_{k+1} \cap (\C'_{k-1}\cup X'_k) = (X_{k+1}\cap\C'_{k-1}) \cup (X_{k+1}\cap X'_k),
    \]
    and $X_{k+1}\cap X'_k=\vp_{\i^{k-1}}(X_2 \cap X'_1)=\varnothing$. It then follows from~\eqref{eq:xkcapek} that
    \[
        X_{k+1}\cap \C'_k = X_{k+1}\cap\C'_{k-1} \subset X_{k+1}\cap E_{k-1} = \varnothing.
    \]
    Similarly, $X'_{k+1}\cap\C_k=\varnothing$. This completes the induction process.

    \textbf{Case 3}. We have (3-3), i.e., $X_1\cap X_2=\varnothing$ and $X'_1\cap X'_2=\varnothing$. In this case, it follows that
    \begin{equation}\label{eq:xkandxkprime2}
        X_{2k-1}\cap X_{2k} = \vp_{\i^{2k-2}}(X_1\cap X_2)=\varnothing \quad\text{and}\quad X'_{2k-1}\cap X'_{2k} = \vp_{\i^{2k-2}}(X'_1\cap X'_2)=\varnothing.
    \end{equation}
    For $k\geq 1$, let $Y_k=X_{2k-1}\cup X'_{2k}$ and $Y'_k=X'_{2k-1}\cup X_{2k}$. Then $E_{2k}=\bigcup_{t=1}^k (Y_t\cup Y'_t)$. It is also not hard to verify the following facts.
    \begin{itemize}
        \item For $k\geq 1$, $Y_{k+1}=\vp_{\i^2}(Y_k)$ and $Y'_{k+1}=\vp_{\i^2}(Y'_k)$;
        \item For $k\geq 1$, $Y_k\cap Y'_k=\varnothing$. In fact, since
        \[
            Y_k \cap Y'_k = (X_{2k-1}\cup X'_{2k}) \cap (X'_{2k-1}\cup X_{2k}),
        \]
        the emptiness follows directly from~\eqref{eq:xkandxkprime} and~\eqref{eq:xkandxkprime2};
        \item $Y_1\cap Y'_2=\varnothing$ and $Y'_1\cap Y_2=\varnothing$. To see this, first note that
        \begin{align*}
            Y_1 \cap Y'_2 &= (X_1\cup X'_2) \cap (X'_3\cup X_4) \\
            &= (X_1 \cap X'_3) \cup (X_1\cap X_4) \cup (X'_2\cap X'_3) \cup (X'_2\cap X_4).
        \end{align*}
        Since $X_1\subset E_1$, we see by~\eqref{eq:xkcapek} that $X_1\cap X'_3=\varnothing$. Similarly, $X_1\cap X_4$ and $X'_2\cap X_4$ are both empty. Finally, $X'_2\cap X'_3=\vp_{\i}(X'_1\cap X'_2)=\varnothing$. So $Y_1 \cap Y'_2=\varnothing$. One can show that $Y'_1\cap Y_2=\varnothing$ by an analogous argument.
    \end{itemize}
    In conclusion, this case is essentially the same as Case 2.
\end{proof}

\begin{remark}\label{rem:fixedptcutpt}
    Note that in the above proof, we actually show that $X_1$ and $X'_1$ belong to different connected components of $\bigcup_{\j\in\D^{kn}\setminus\{\i^k\}}\vp_{\j}(F)$ for all $k\geq 1$. Write $x$ to be the fixed point of the map $\vp_{\i}$. Combining Lemma~\ref{lem:iiawayfromj} with~\cite[Theorem 4.2]{DLRWX21}, it is easy to see that $x$ is a cut point of $F$.
\end{remark}

\begin{corollary}\label{cor:twoandgoodmeanscutpt}
    Let $n\geq 1$ and let $\i$ be an essential cut vertex of $\Gamma_n$ such that $\Gamma_n-\{\i\}$ has exactly two connected components. If $\i\i$ is also an essential cut vertex of $\Gamma_{2n}$, then $\i^k$ is an essential cut vertex of $\Gamma_{kn}$ for all $k\geq 1$.
\end{corollary}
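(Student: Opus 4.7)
The plan is to derive the corollary directly from Proposition~\ref{prop:twogoodmeanscutpt}, letting the two connected components of $\Gamma_n-\{\i\}$ supply the required partition. More precisely, I will take $\Lambda,\Lambda'\subset\D^n\setminus\{\i\}$ to be the vertex sets of those two components and set $X_1=\bigcup_{\j\in\Lambda}\vp_{\j}(F)$ and $X_1'=\bigcup_{\j\in\Lambda'}\vp_{\j}(F)$. Since each cell $\vp_{\j}(F)$ is connected and any edge in $\Gamma_n$ records a non-empty intersection of the corresponding cells, $X_1$ and $X_1'$ themselves inherit connectedness from the graph components---a fact I will use repeatedly.

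Next I will verify the four hypotheses of Proposition~\ref{prop:twogoodmeanscutpt}. Condition (1) is automatic. For (2), any intersection $\vp_{\j}(F)\cap\vp_{\j'}(F)\neq\varnothing$ with $\j\in\Lambda,\j'\in\Lambda'$ would create an edge in $\Gamma_n-\{\i\}$ between two vertices in different components, which is impossible, so $X_1\cap X_1'=\varnothing$. Condition (4) follows from $\i$ being essential: it supplies $u,v\in\D\setminus\{i_1\}$ with $u\D^{n-1}$ and $v\D^{n-1}$ in different components of $\Gamma_n-\{\i\}$, so, after relabeling, $u\D^{n-1}\subset\Lambda$ and $v\D^{n-1}\subset\Lambda'$, whence $\vp_u(F)\subset X_1$ and $\vp_v(F)\subset X_1'$.

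The main obstacle is condition (3), and this is where the hypothesis on $\i\i$ enters. The key observation is that
\[
    E_2:=\bigcup_{\eta\in\D^{2n}\setminus\{\i\i\}}\vp_{\eta}(F)=X_1\cup X_1'\cup\vp_{\i}(X_1)\cup\vp_{\i}(X_1'),
\]
a union of four connected pieces with $X_1\cap X_1'=\varnothing$ and $\vp_{\i}(X_1)\cap\vp_{\i}(X_1')=\varnothing$. Since $\i\i$ is an essential cut vertex of $\Gamma_{2n}$, Lemma~\ref{lem:equivalent} provides digits $u',v'\in\D\setminus\{i_1\}$ whose first-level cells lie in distinct components of $E_2$; because these cells are connected and, having first digit different from $i_1$, are contained in $X_1\cup X_1'$, one of them must lie in $X_1$ and the other in $X_1'$. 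For $X_1$ and $X_1'$ to remain in different components of $E_2$, neither $\vp_{\i}(X_1)$ nor $\vp_{\i}(X_1')$ may intersect both $X_1$ and $X_1'$---otherwise it would bridge them into a single component---and this is precisely condition (3). Once (1)--(4) are in place, Proposition~\ref{prop:twogoodmeanscutpt} immediately yields that $\i^k$ is an essential cut vertex of $\Gamma_{kn}$ for every $k\geq 1$.
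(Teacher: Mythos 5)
Your argument is correct and follows essentially the same route as the paper: take $X_1,X_1'$ to be the unions of cells over the vertex sets of the two components of $\Gamma_n-\{\i\}$, use the decomposition of $\bigcup_{\eta\in\D^{2n}\setminus\{\i\i\}}\vp_{\eta}(F)$ into $X_1\cup X_1'\cup\vp_{\i}(X_1)\cup\vp_{\i}(X_1')$ together with the essentiality of $\i\i$ to get condition (3), and then invoke Proposition~\ref{prop:twogoodmeanscutpt}. Your write-up is somewhat more explicit than the paper's in verifying conditions (1), (2) and (4), but the substance is identical.
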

\begin{proof}
    Let $V, V'$ be the vertex sets of the two connected components of $\Gamma_n-\{\i\}$, respectively. In particular, $V\cup V'=\D^n\setminus\{\i\}$. Let $X_1=\bigcup_{\j\in V}\vp_{\j}(F)$ and $X'_1=\bigcup_{\j\in V'}\vp_{\j}(F)$. Then
    \begin{align}
        \bigcup_{\j\in\D^{2n}\setminus\{\i\i\}} \vp_{\j}(F) &= \Big( \bigcup_{\j\in\D^{2n}\setminus\{\i\i\},\i\nprec\j}\vp_{\j}(F) \Big) \cup \Big( \bigcup_{\j\in\D^{2n}\setminus\{\i\i\},\i\prec\j}\vp_{\j}(F) \Big) \notag \\
        &= \Big( \bigcup_{\j\in\D^{n}\setminus\{\i\}}\vp_{\j}(F) \Big) \cup \vp_{\i}\Big( \bigcup_{\j\in\D^n\setminus\{\i\}}\vp_{\j}(F) \Big) \notag \\
        &= (X_1\cup X'_1) \cup (\vp_{\i}(X_1) \cup \vp_{\i}(X'_1)). \label{eq:cor4-7}
    \end{align}
    Note that $X_1$ and $X'_1$ are both connected. Thus, from \eqref{eq:cor4-7} and the fact that $\i\i$ is an essential cut vertex of $\Gamma_{2n}$, $X_1$ and $X'_1$ belong to different connected components of $\bigcup_{\j\in\D^{2n}\setminus\{\i\i\}}\vp_{\j}(F)$. Using \eqref{eq:cor4-7} again, both of $\vp_{\i}(X_1)$ and $\vp_{\i}(X'_1)$ cannot intersect $X_1$ and $X'_1$ simultaneously. Now the corollary follows directly from Proposition~\ref{prop:twogoodmeanscutpt}.
\end{proof}

\begin{proposition}\label{prop:stillgood}
    Let $n\geq 3$. If $\i=i_1\cdots i_n\in\D^n$ is an essential cut vertex of $\Gamma_n$, then either $i_2\cdots i_n$ is an essential cut vertex of $\Gamma_{n-1}$, or $i_3\cdots i_n$ is an essential cut vertex of $\Gamma_{n-2}$.
\end{proposition}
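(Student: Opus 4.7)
I would argue by contradiction. Suppose $\i = i_1 \cdots i_n$ is essential in $\Gamma_n$ while neither $i_2\cdots i_n$ is essential in $\Gamma_{n-1}$ nor $i_3\cdots i_n$ is essential in $\Gamma_{n-2}$. Translating via Lemma~\ref{lem:equivalent}, these three hypotheses say: there exist $i, j \in \D\setminus\{i_1\}$ with $\vp_i(F)$ and $\vp_j(F)$ in different components of $A := \bigcup_{\eta \in \D^n\setminus\{\i\}}\vp_\eta(F)$; all cells $\vp_v(F)$ with $v\in\D\setminus\{i_2\}$ lie in a common component $B_0$ of $B := \bigcup_{\eta'\in\D^{n-1}\setminus\{i_2\cdots i_n\}}\vp_{\eta'}(F)$; and all cells $\vp_u(F)$ with $u\in\D\setminus\{i_3\}$ lie in a common component $C_0$ of $C := \bigcup_{\eta''\in\D^{n-2}\setminus\{i_3\cdots i_n\}}\vp_{\eta''}(F)$. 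The remaining components of $B$ and $C$ are then confined to $\vp_{i_2}(F)$ and $\vp_{i_3}(F)$, respectively.

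The plan is to exploit the self-similar decompositions
\[
A = \bigcup_{w\in\D\setminus\{i_1\}}\vp_w(F) \,\cup\, \vp_{i_1}(B), \qquad B = \bigcup_{v\in\D\setminus\{i_2\}}\vp_v(F) \,\cup\, \vp_{i_2}(C),
\]
and show that every $\vp_w(F)$ for $w\in\D\setminus\{i_1\}$ lies in a single component of $A$, contradicting the separation of $\vp_i(F)$ and $\vp_j(F)$. For each $w$ adjacent to $i_1$ in $\Gamma_1$, write $\vp_w(F)\cap\vp_{i_1}(F)\subset\bigcup_{\gamma\in\D_w}\vp_{i_1\gamma}(F)$, where $\D_w$ is the set of boundary level-$2$ sub-cells of $\vp_{i_1}$ meeting $\vp_w$. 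If some $\gamma\in\D_w$ is not $i_2$, then $\vp_{i_1\gamma}(F) \subset \vp_{i_1}(B_0)$ provides a direct bridge between $\vp_w(F)$ and $\vp_{i_1}(B_0)$. Otherwise $\vp_w(F)\cap\vp_{i_1}(F)\subset\vp_{i_1 i_2}(F)$, and I zoom one level deeper: partition $\vp_w(F)\cap\vp_{i_1 i_2}(F)$ among the level-$3$ sub-cells $\vp_{i_1 i_2\beta}(F)$ and use any $\beta\neq i_3$ to bridge $\vp_w(F)\to\vp_{i_1 i_2\beta}(F)\subset\vp_{i_1 i_2}(C_0)\subset\vp_{i_1}(B_0)$. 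Transferring the bridge from adjacent $w$ to arbitrary $w\in\D\setminus\{i_1\}$ is then routine via Lemma~\ref{lem:graphcon} applied to the components of $\Gamma_1-\{i_1\}$, since each such component must contain some vertex adjacent to $i_1$.

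The crux is the residual case $\vp_w(F)\cap\vp_{i_1 i_2}(F)\subset\vp_{i_1 i_2 i_3}(F) = \vp_{\i|_3}(F)$, in which the only point of contact between $\vp_w(F)$ and $\vp_{i_1}(F)$ lies inside a level-$3$ ancestor of the removed cell $\vp_\i(F)$. I expect to rule this out by combining Lemma~\ref{lem:casedisfact} (uniqueness of the realizing sub-cell pair forces $\vp_w(F)\cap\vp_{i_1}(F)$ to be a singleton) with Lemma~\ref{lem:fourvertex} (pinning down the corner at which the touch occurs), and then invoking Lemma~\ref{lem:evenfragile} to deduce that $F$ must be fragile, contradicting the standing hypothesis of Section~3. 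The main obstacle I anticipate is precisely this step: one must track all geometric orientations of the triple $(i_1, i_2, i_3)$ in a GSC, verify in each configuration that the nested containment $\vp_w(F)\cap\vp_{i_1}(F)\subset\vp_{\i|_3}(F)$ really forces the singleton conclusion, and then package the resulting decomposition into the form required by Lemma~\ref{lem:evenfragile}. This delicate casework almost certainly explains why Section~6 is reserved for the full argument.
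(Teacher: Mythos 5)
Your logical setup (assume both conclusions fail and contradict the essentiality of $\i$) is equivalent to the paper's, but the bridging argument has two genuine gaps. First, the step $\vp_{i_1i_2\beta}(F)\subset\vp_{i_1i_2}(C_0)\subset\vp_{i_1}(B_0)$ silently assumes that $\vp_{i_2}(C_0)$ lies in the component $B_0$ of $B$. That is not automatic: $B_0$ is by definition the component containing $\bigcup_{v\in\D\setminus\{i_2\}}\vp_v(F)$, and the connected set $\vp_{i_2}(C_0)$ joins it only if some $\vp_v(F)$, $v\neq i_2$, meets $\vp_{i_2}(F)$ outside the ``bad'' part of $\vp_{i_2i_3}(F)$ --- which is exactly the difficulty you face at the top level, recursed one step down. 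An additional argument (or induction) is needed to close this.

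Second, and more seriously, the ``crux'' you isolate is not the actual hard case, and your plan for it does not go through as stated. When $\vp_w(F)\cap\vp_{i_1}(F)\subset\vp_{\i|_3}(F)$ with the two squares sharing an edge, the intersection need not be a singleton, so Lemma~\ref{lem:casedisfact} and Lemma~\ref{lem:fourvertex} do not force fragility from this pair. The configuration that genuinely requires the alternative conclusion ``$i_3\cdots i_n$ is essential'' (Subcase 2.2 of Section 6) is one where a neighbor $i_*$ of $i_1$ meets $\vp_{i_1}(F)$ only inside $\vp_{i_1i_2}(F)$, through \emph{two} of its own level-$2$ cells along the bottom edge; there the contact set is not a single point, no fragility can be read off from the pair $(\vp_{i_*}(F),\vp_{i_1}(F))$, and one instead has to pin down the digit set around $i_2=(a,0)$ (showing $(a\pm1,0),(a,1)\notin\D$ and $(a+1,1)\in\D$) and derive fragility from the decomposition $\{i_2\}$ versus $\D\setminus\{i_2\}$. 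The paper's route is also structurally different: rather than bridging every neighbor of $i_1$, it first uses Lemma~\ref{lem:connected} to produce a single obstructing neighbor $i_*$ with $\vp_{i_*}(F)\cap\vp_{i_1i_2}(F)\neq\varnothing$ and $\vp_{i_*}(F)$ disjoint from the component containing $\vp_{i_1}\bigl(\bigcup_{j\neq i_2}\vp_j(F)\bigr)$, and then runs a finite geometric case analysis on how many level-$2$ cells of $\vp_{i_*}(F)$ touch $\vp_{i_1i_2}(F)$.
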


Since the proof of Proposition~\ref{prop:stillgood} involves a rather technical case-by-case discussion, we decide to present it in Section 6 so readers can move on without being overwhelmed by tedious details. One can take this proposition for granted at this moment and come back to the proof of it later.

\begin{remark}
    In Proposition~\ref{prop:stillgood}, $i_2\cdots i_n$ is not necessarily essential. For example, consider the GSC as in Figure~\ref{fig:exa_iigood}. It is not hard to see that the following facts hold.
    \begin{enumerate}
        \item The GSC is non-fragile and connected;
        \item $(0,0)(0,0)$ is an essential cut vertex of $\Gamma_2$;
        \item $(0,2)(4,0)(0,0)(0,0)$ is an essential cut vertex of $\Gamma_4$;
        \item $(4,0)(0,0)(0,0)$ is not an essential cut vertex of $\Gamma_3$.
    \end{enumerate}
    \begin{figure}[htbp]
        \centering
        \includegraphics[width=4.1cm]{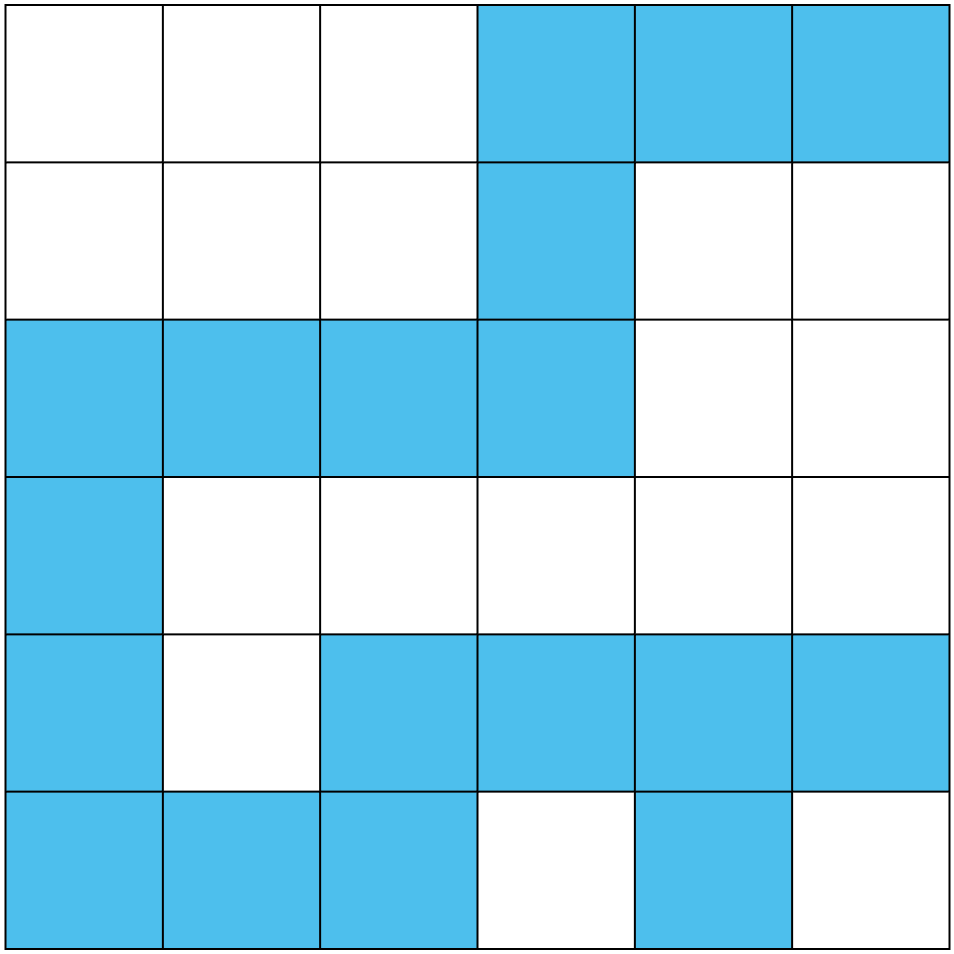} \quad
        \includegraphics[width=4.1cm]{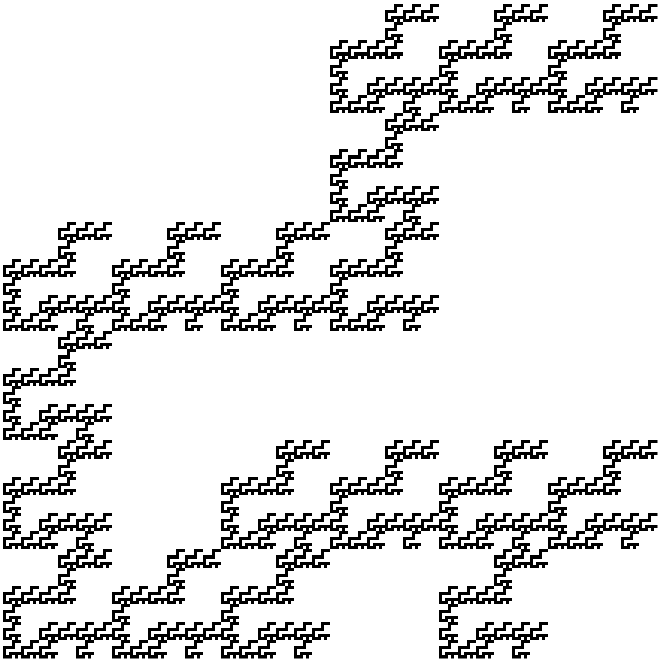} \quad
        \includegraphics[width=4.1cm]{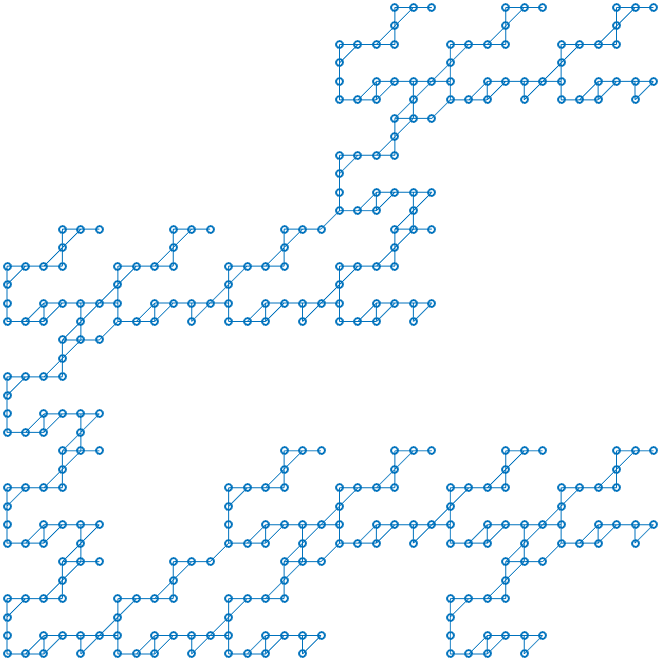}
        \caption{From left to right: the initial pattern, the GSC and the $2$-nd Hata graph}
        \label{fig:exa_iigood}
    \end{figure}
\end{remark}

\section{Detecting essential cut vertices in finitely many steps}
\subsection{Positions of neighbor cells}
One of the key ingredient in the proof of Theorem~\ref{thm:goodcp} is the grid structure in the construction of GSCs. More precisely, every basic square to be concerned with is surrounded by at most $8$ squares of the same side length, and we will label these positions as in Figure~\ref{fig:8positions}. For convenience, we write $\P:=\{\uparrow,\downarrow,\leftarrow,\rightarrow,\swarrow,\searrow,\nwarrow,\nearrow\}$ as the collection of these labels. The following definition is rather a way of notation.

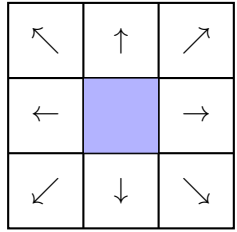
\begin{figure}[htbp]
    \centering
        \begin{tikzpicture}
            \draw[thick] (0,0) rectangle (3,3);
            \draw[thick] (0,1) to (3,1);
            \draw[thick] (0,2) to (3,2);
            \draw[thick] (1,0) to (1,3);
            \draw[thick] (2,0) to (2,3);
            \filldraw[draw=black,fill=blue!30] (1,1) rectangle (2,2);
            \node at(0.5,0.5) {$\swarrow$};
            \node at(1.5,0.5) {$\downarrow$};
            \node at(2.5,0.5) {$\searrow$};
            \node at(0.5,1.5) {$\leftarrow$};
            \node at(2.5,1.5) {$\rightarrow$};
            \node at(0.5,2.5) {$\nwarrow$};
            \node at(1.5,2.5) {$\uparrow$};
            \node at(2.5,2.5) {$\nearrow$};
        \end{tikzpicture}
        \caption{The eight neighbor squares and their labels}
        \label{fig:8positions}
\end{figure}


\begin{definition}\label{de:position}
    For every $\i\in\D^*$ and $t\in\P$, denote $\bm{i}(t)$ to be the element in $\D^{|\i|}$, if there is such one, satisfying the following conditions:
    \begin{enumerate}
        \item $\vp_{\i(t)}([0,1]^2)$ is the square (of the same size) lying exactly in the position $t$ adjacent to $\vp_{\i}([0,1]^2)$;
        \item $\vp_{\i(t)}(F)\cap\vp_{\i}(F)\neq\varnothing$.
    \end{enumerate}
\end{definition}

The second condition guarantees that $\vp_{\i(t)}(F)$ is a ``truly'' neighbor cell of $\vp_{\i}(F)$. The following is a simple geometric observation.

\begin{lemma}\label{lem:position}
    Let $\i,\j\in\D^*$ and let $P\subset\P$ be such that  $\i(t), \j(t)$ are well defined for all $t\in P$. Then for any $A\subset F$,
    \[
        \Big( \bigcup_{t\in P}\vp_{\i(t)}(F) \Big) \cap \vp_{\i}(A) = \varnothing \Longleftrightarrow \Big( \bigcup_{t\in P}\vp_{\j(t)}(F) \Big) \cap \vp_{\j}(A) = \varnothing.
    \]
\end{lemma}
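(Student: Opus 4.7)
The plan is to reduce both sides of the equivalence to a single intrinsic condition that depends only on $P$ and $A$ and not on the choice of word. Write each similarity as a translated scaling $\vp_{\i}(x)=N^{-|\i|}x+c_{\i}$, and associate to every label $t\in\P$ its grid displacement vector $\delta_t\in\{-1,0,1\}^2$ (for instance, $\delta_{\uparrow}=(0,1)$, $\delta_{\nearrow}=(1,1)$, and so on). Whenever $\i(t)$ is well-defined, condition (1) of Definition~\ref{de:position} identifies the unit square $\vp_{\i(t)}([0,1]^2)$ with $\vp_{\i}([0,1]^2)+N^{-|\i|}\delta_t$, which forces
\[
    c_{\i(t)}=c_{\i}+N^{-|\i|}\delta_t \quad\text{and hence}\quad \vp_{\i(t)}(F)=\vp_{\i}(F+\delta_t).
\]
The same identity holds for $\j(t)$.

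Once this identification is in hand, the rest is bookkeeping. Using the injectivity of the similarity $\vp_{\i}$,
\[
    \Big(\bigcup_{t\in P}\vp_{\i(t)}(F)\Big)\cap\vp_{\i}(A) \;=\; \vp_{\i}\Big(\bigcup_{t\in P}(F+\delta_t)\cap A\Big),
\]
which is empty if and only if $\bigcup_{t\in P}(F+\delta_t)\cap A=\varnothing$. This last condition depends only on $P$ and $A$, not on $\i$. Running the same calculation with $\j$ in place of $\i$ produces the identical criterion, and the equivalence follows.

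The only place requiring a bit of care is verifying the displacement identity uniformly over all $t\in\P$, including the four diagonal labels. However, this is pure bookkeeping against Figure~\ref{fig:8positions}: the length-$|\i|$ digit word whose associated square sits in a prescribed grid-adjacent position is uniquely determined, so the translation part has to be exactly $c_{\i}+N^{-|\i|}\delta_t$. I therefore do not anticipate any real obstacle. The lemma is essentially a translation-invariance statement that falls out as soon as one observes $\vp_{\i(t)}(F)=\vp_{\i}(F+\delta_t)$ and pulls the common factor $\vp_{\i}$ outside the intersection.
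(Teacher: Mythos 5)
Your proof is correct and follows essentially the same route as the paper: the paper simply asserts, by self-similarity, that $\vp_{\i(t)}(F)\cap\vp_{\i}(A)$ is a scaled copy of $\vp_{\j(t)}(F)\cap\vp_{\j}(A)$, which is exactly what your identity $\vp_{\i(t)}(F)=\vp_{\i}(F+\delta_t)$ makes explicit. Your version just spells out the translation bookkeeping and reduces both sides to the word-independent condition $\bigcup_{t\in P}\bigl((F+\delta_t)\cap A\bigr)=\varnothing$, which is a perfectly valid (and slightly more detailed) rendering of the paper's one-line argument.
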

\begin{proof}
    By the self-similarity, for each fixed $t\in P$, $\vp_{\i(t)}(F)\cap\vp_{\i}(A)$ is just a scaled copy of $\vp_{\j(t)}(F)\cap\vp_{\j}(A)$. So one of them is empty if and only if the other is empty. Then the lemma follows immediately.
\end{proof}

\subsection{Proof of Theorem~\ref{thm:goodcp}}
Now let us begin the proof. Let $M$ be a large positive integer that will be specified later and assume that $\chi(\Gamma_M)\geq|\D|^{M-1}$. Then there exists by Lemma~\ref{lem:pra1} an essential cut vertex $\i=i_1\cdots i_M$ of $\Gamma_M$. By definition, we can find $i_*,j_*\in\D\setminus\{i_1\}$ such that $i_*\D^{M-1}$ and $j_*\D^{M-1}$ belong to different connected components of $\Gamma_M-\{\i\}$. Equivalently (again by Lemma~\ref{lem:equivalent}), $\vp_{i_*}(F)$ and $\vp_{j_*}(F)$ belong to different components of $\bigcup_{\j\in\D^M\setminus\{\i\}}\vp_{\j}(F)$, namely $\C_{i_*}$ and $\C_{j_*}$. For $1\leq n\leq M$, write
\[
    V_n = \{\j\in\D^n: \vp_{\j}(F)\subset\C_{i_*}\}, \quad \C_n = \bigcup_{\j\in V_n} \vp_{\j}(F)
\]
and
\[
    V'_n = \D^n\setminus(\{\i|_n\}\cup V_n), \quad \C'_n = \bigcup_{\j\in V'_n} \vp_{\j}(F).
\]
Note that both of $\C_n,\C'_n$ are finite unions of level-$n$ cells and $\C_n\cap\C'_n=\varnothing$. Furthermore, $\{\C_n\}_{n=1}^M$ and $\{\C'_n\}_{n=1}^M$ are both increasing sequences.

Recall that $\P=\{\uparrow,\downarrow,\leftarrow,\rightarrow,\swarrow,\searrow,\nwarrow,\nearrow\}$. For $1\leq n\leq M$, let
\[
    \P_n = \{t\in\P: \vp_{\i|_n(t)}(F)\subset \C_n\} \quad\text{and}\quad \P'_n = \{t\in\P: \vp_{\i|_n(t)}(F)\subset \C'_n\}.
\]
That is to say, $\P_n$ (resp. $\P'_n$) records positions of level-$n$ cells that is ``adjacent'' to $\vp_{\i|_n}(F)$ and contained in $\C_n$ (resp. $\C'_n$). Clearly, $\P_n\cap\P'_n=\varnothing$. Since $\i|_n$ is a cut vertex of $\Gamma_n$, it follows from Lemma~\ref{lem:graphcon} that $\P_n$ and $\P'_n$ are both non-empty. In particular, by Definition~\ref{de:position}, we see that $\C_n\cap \vp_{\i|_n}(F)\neq\varnothing$ and $\C'_n\cap \vp_{\i|_n}(F)\neq\varnothing$.

By Proposition~\ref{prop:stillgood}, there is a sequence $1\leq k_1<k_2<\cdots\leq M-2$ such that for all $p$:
\begin{enumerate}
    \item $i_{k_p}\cdots i_M$ is an essential cut vertex of $\Gamma_{M-k_p+1}$;
    \item $k_p<k_{p+1}\leq k_p+2$.
\end{enumerate}
Note that $\P$ is a finite set. So taking $M$ large enough in the beginning, we can find $n_1<n_2$ such that
\[
    \{\P_{k_{n_1}}, \P'_{k_{n_1}}\} = \{\P_{k_{n_2}}, \P'_{k_{n_2}}\}.
\]
For example, since $\P$ has at most $2^{-1}\cdot 3^{|\P|}=2^{-1}\cdot 3^8$ distinct unordered pairs of disjoint subsets, taking $M=3^8+3$ will suffice. By Lemma~\ref{lem:ingoodthenin1}, $i_{k_{n_1}}\cdots i_{k_{n_2}}$ is also essential. To avoid complicated subscripts, we may replace $k_{n_1}, k_{n_2}$ with $n_1,n_2$, respectively, and write
\[
    \omega := i_{n_1+1}\cdots i_{n_2}.
\]
Furthermore, let
\begin{equation}\label{eq:pomegaprime}
    \P_\omega = \{t\in\P: \vp_{\i|_{n_1}}(\vp_{\omega(t)}(F)) \subset \C_{n_2}\} \quad\text{and}\quad \P'_\omega = \{t\in\P: \vp_{\i|_{n_1}}(\vp_{\omega(t)}(F)) \subset \C'_{n_2} \}.
\end{equation}

\begin{lemma}\label{lem:qsubsetp}
    $\P_\omega\subset \P_{n_2}$, $\P'_\omega\subset \P'_{n_2}$.
\end{lemma}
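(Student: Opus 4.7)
The plan is to exploit the self-similar grid structure and reduce the containment to a simple identification of words. First, I would note that since $\omega=i_{n_1+1}\cdots i_{n_2}$, we have $\i|_{n_2}=\i|_{n_1}\cdot\omega$ and consequently $\vp_{\i|_{n_2}}=\vp_{\i|_{n_1}}\circ\vp_{\omega}$. The map $\vp_{\i|_{n_1}}$ is a similarity consisting of a scaling by the factor $N^{-n_1}$ followed by a translation; in particular it is orientation-preserving and sends the relative direction labels in $\P$ to themselves.

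Next, for each $t\in\P_\omega$ (which implicitly requires $\omega(t)$ to be defined in $\D^{n_2-n_1}$), I would verify that the concatenated word $\i|_{n_1}\cdot\omega(t)\in\D^{n_2}$ satisfies both conditions of Definition~\ref{de:position} relative to $\i|_{n_2}$, and therefore must coincide with $\i|_{n_2}(t)$. For the first condition, $\vp_{\i|_{n_1}\cdot\omega(t)}([0,1]^2)=\vp_{\i|_{n_1}}(\vp_{\omega(t)}([0,1]^2))$ is the image under $\vp_{\i|_{n_1}}$ of the square that lies in position $t$ adjacent to $\vp_{\omega}([0,1]^2)$; by position-preservation this image is the square in position $t$ adjacent to $\vp_{\i|_{n_1}}(\vp_{\omega}([0,1]^2))=\vp_{\i|_{n_2}}([0,1]^2)$. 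For the second condition, Definition~\ref{de:position} applied to $\omega(t)$ gives $\vp_{\omega(t)}(F)\cap\vp_{\omega}(F)\neq\varnothing$; applying the injective map $\vp_{\i|_{n_1}}$ yields $\vp_{\i|_{n_1}\cdot\omega(t)}(F)\cap\vp_{\i|_{n_2}}(F)\neq\varnothing$. Hence $\i|_{n_2}(t)=\i|_{n_1}\cdot\omega(t)$.

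Once this identification is in hand, the conclusion is immediate: the defining condition $t\in\P_\omega$ reads $\vp_{\i|_{n_1}}(\vp_{\omega(t)}(F))\subset\C_{n_2}$, which is precisely $\vp_{\i|_{n_2}(t)}(F)\subset\C_{n_2}$, so $t\in\P_{n_2}$. The second inclusion $\P'_\omega\subset\P'_{n_2}$ follows verbatim, replacing $\C_{n_2}$ by $\C'_{n_2}$. I do not anticipate any substantive obstacle; the lemma is essentially a bookkeeping observation asserting that the position labels respect the self-similar decomposition of level-$n_2$ cells as sub-cells of level-$n_1$ cells, and the entire proof should occupy at most a few lines.
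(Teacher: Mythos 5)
Your proposal is correct and follows essentially the same route as the paper: both verify that $\i|_{n_1}\omega(t)$ satisfies the two conditions of Definition~\ref{de:position} relative to $\i|_{n_2}$ (the square position via the similarity $\vp_{\i|_{n_1}}$, and the nonempty intersection $\vp_{\i|_{n_1}}(\vp_{\omega(t)}(F)\cap\vp_\omega(F))\neq\varnothing$), deduce $\i|_{n_2}(t)=\i|_{n_1}\omega(t)$, and then read off the containment. No gaps.
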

\begin{proof} 
    If $t\in \P_\omega$, then
    \[
      \vp_{\i|_{n_1}\omega(t)}(F) \cap \vp_{\i|_{n_2}}(F) = \vp_{i|_{n_1}} (\vp_{\omega(t)}(F) \cap \vp_\omega(F))\not=\varnothing,
    \]
    and $\vp_{\i|_{n_1}\omega(t)}([0,1]^2)$ is the square (of the same size) lying exactly in the position $t$ adjacent to $\vp_{\i|_{n_2}}([0,1]^2)$. Thus $\i|_{n_2}(t)=\i|_{n_1}\omega(t)$ so that $t\in \P_{n_2}$. Hence $\P_\omega\subset \P_{n_2}$. Similarly, $\P'_\omega\subset \P'_{n_2}$.
\end{proof}


\begin{lemma}\label{lem:5-3}
    There are $i,j\in \D\setminus \{i_{n_1+1}\}$ with $\vp_{\i|_{n_1}i}(F)\cap\C_{n_1}\neq\varnothing$ and $\vp_{\i|_{n_1}j}(F)\cap\C'_{n_1}\neq\varnothing$.
\end{lemma}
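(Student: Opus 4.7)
My plan is to prove existence of $i$; the existence of $j$ follows by an entirely symmetric argument with $\C_{n_1}$ replaced by $\C'_{n_1}$. By Lemma~\ref{lem:ingoodthenin1}, $\i|_{n_1}$ is essential in $\Gamma_{n_1}$, so $\P_{n_1}$ is non-empty and $\vp_{\i|_{n_1}}(F)\cap\C_{n_1}\neq\varnothing$, whence some level-$(n_1+1)$ subcell of $\vp_{\i|_{n_1}}(F)$ meets $\C_{n_1}$. The task is to rule out the ``bad case''
\[
    \vp_{\i|_{n_1}}(F)\cap\C_{n_1}\subset\vp_{\i|_{n_1+1}}(F),
\]
in which $\vp_{\i|_{n_1+1}}(F)$ is the only such subcell.

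The first consequence of the bad case is, via a path-of-cells argument, that every inside subcell $\vp_{\i|_{n_1}w}(F)$ with $w\neq i_{n_1+1}$ belongs to $V'_{n_1+1}$ (not $V_{n_1+1}$): otherwise, the connectedness of $\C_{i_*}$ would produce a path of $V_{n_1+1}$-cells from the inside subcell back to $V_{n_1}\D$, and the first ``exit'' cell of this path would be an inside subcell other than $\vp_{\i|_{n_1+1}}(F)$ meeting $\C_{n_1}$, contradiction. In particular, the digits $i^*,j^*\in\D\setminus\{i_{n_1+1}\}$ supplied by the essentialness of $i_{n_1+1}\cdots i_M$ in $\Gamma_{M-n_1}$ (via Lemmas~\ref{lem:ingoodthenin1}, \ref{lem:nonfragilelast}, and~\ref{lem:equivalent}) both give $\vp_{\i|_{n_1}i^*}(F),\vp_{\i|_{n_1}j^*}(F)\in V'_{n_1+1}$. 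Packaging the bad case into the level-$(n_1+1)$ decomposition $\D^{n_1+1}=I\cup J$ with $I=V_{n_1}\D\cup\{\i|_{n_1+1}\}$ and $J=\D^{n_1+1}\setminus I$, the aim is to contradict the non-fragility of $F$ via Lemma~\ref{lem:evenfragile} by showing the intersection $\bigcup_I\vp_{\j}(F)\cap\bigcup_J\vp_{\j}(F)$ is a single point.

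The main obstacle is exactly this singleton claim, since in general the intersection contains several boundary points where $\vp_{\i|_{n_1+1}}(F)$ meets its neighbouring inside subcells, plus a possible contact with $\C'_{n_1}$. My plan is to iterate the bad case through the deeper essential prefixes $\i|_{n_1+2},\ldots,\i|_M$ (each being essential by Lemma~\ref{lem:ingoodthenin1}) so as to confine $\vp_{\i|_{n_1}}(F)\cap\C_{n_1}$ into the nested cells $\vp_{\i|_{n_1+1}}(F)\supset\vp_{\i|_{n_1+2}}(F)\supset\cdots\supset\vp_\i(F)$, and to finish via Lemma~\ref{lem:casedisfact} once the iteration reaches a level where only a unique pair of sub-subcells supplies the intersection. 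A short case analysis on the position of $i_{n_1+1}$ in $\D$ should streamline the iteration: an interior $i_{n_1+1}$ rules out the bad case at once because $\vp_{\i|_{n_1+1}}(F)$ cannot touch the exterior of $\vp_{\i|_{n_1}}(F)$, leaving only edge and corner digits of $\D$ to be handled in detail.
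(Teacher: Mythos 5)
Your setup (argue by contradiction from the ``bad case'' that $\vp_{\i|_{n_1+1}}(F)$ is the only level-$(n_1+1)$ subcell of $\vp_{\i|_{n_1}}(F)$ meeting $\C_{n_1}$) matches the paper's, but the proposal stops exactly where the real work begins, and the route you sketch for that work would not go through. The intersection of $\bigcup_{\j\in I}\vp_{\j}(F)$ with $\bigcup_{\j\in J}\vp_{\j}(F)$ for your decomposition $I=V_{n_1}\D\cup\{\i|_{n_1+1}\}$ contains every contact point between $\vp_{\i|_{n_1+1}}(F)$ and its sibling subcells of $\vp_{\i|_{n_1}}(F)$; this set is typically infinite (two adjacent cells can meet along a whole segment of $F$), and it has nothing to do with $\C_{n_1}$, so the bad case gives no control over it. Consequently the proposed remedy --- iterating through deeper essential prefixes until ``only a unique pair of sub-subcells supplies the intersection'' so that Lemma~\ref{lem:casedisfact} applies --- cannot work as described: the bad case does not propagate to deeper levels (essentialness of $\i|_{n_1+2},\ldots$ constrains the internal structure of $\vp_{\i|_{n_1+1}}(F)$, not where $\C_{n_1}$ or the sibling cells touch it), and there need not exist any finite level at which the contact reduces to a single pair of subcells. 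The auxiliary ``path-of-cells'' claim that all inside subcells other than $\vp_{\i|_{n_1+1}}(F)$ lie in $V'_{n_1+1}$ is also unjustified as stated: a level-$M$ cell of $\C_{i_*}$ adjacent to the exit point of your chain need not lie inside a level-$n_1$ cell that belongs to $\C_{n_1}$, so no contradiction with the bad case is immediate.

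The paper resolves the bad case quite differently, and entirely at level $n_1+1$. First, a fragility argument (Lemma~\ref{lem:evenfragile}) is run at level $n_1$, not $n_1+1$: if every level-$n_1$ cell of $\C_{n_1}$ met $\vp_{\i|_{n_1}}(F)$ in only one level-$(n_1+1)$ subcell pair, then by Lemmas~\ref{lem:casedisfact} and~\ref{lem:fourvertex} the whole intersection $\C_{n_1}\cap\vp_{\i|_{n_1}}(F)$ would be a singleton, contradicting non-fragility. This produces a neighbour cell $\vp_{\eta}(F)\subset\C_{n_1}$ having at least two level-$(n_1+1)$ subcells that meet $\vp_{\i|_{n_1+1}}(F)$, and the proof concludes with a finite geometric case analysis on the mutual position of these subcells: either all four corner digits are forced into $\D$ and a second inside subcell meeting $\vp_{\eta}(F)$ appears (contradicting uniqueness), or $(a\pm 1,0)\notin\D$ and Lemma~\ref{lem:notacutpt} contradicts the fact that $i_{n_1+1}$ is a cut vertex of $\Gamma_1$. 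Neither this pivot to a neighbour cell in $\C_{n_1}$ nor the use of Lemma~\ref{lem:notacutpt} appears in your proposal, and without them (or a genuine substitute) the argument is incomplete. Your observation that an interior digit $i_{n_1+1}$ kills the bad case immediately is correct, but the edge and corner cases you defer are precisely the substance of the lemma.
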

\begin{proof}
    We will prove the lemma by contradiction. Suppose on the contrary that $\vp_{\i|_{n_1+1}}(F)$ is the only level-$(n_1+1)$ cell in $\vp_{\i|_{n_1}}(F)$ which intersects $\C_{n_1}$.

    {\bf Claim}. There exists some level-$n_1$ cell in $\C_{n_1}$ that contains at least two level-$(n_1+1)$ cells intersecting $\vp_{\i|_{n_1}}(F)$.

    Otherwise, for every level-$n_1$ cell $\vp_{\j}(F)\subset\C_{n_1}$ with $\vp_{\j}(F)\cap\vp_{\i|_{n_1}}(F)\neq\varnothing$, we see by Lemma~\ref{lem:casedisfact} that $\vp_{\j}(F) \cap \vp_{\i|_{n_1}}(F)$ is merely a singleton. If there are more than one such cells, then $i_{n_1+1}\in\{(0,0),(N-1,0),(0,N-1),(N-1,N-1)\}$. By Lemma~\ref{lem:fourvertex}, these singletons must be identical (i.e., $\{\vp_{\i|_{n_1}}(\frac{i_{n_1+1}}{N-1})\}$). So in conclusion, $\C_{n_1}\cap\vp_{\i|_{n_1}}(F)$ is just a singleton. Recall that $\C_{n_1}, \C'_{n_1}$ are both finite union of level-$n_1$ cells and they are disjoint. Therefore,
    \[
        \Big( \bigcup_{\j\in V_{n_1}}\vp_{\j}(F) \Big) \cap \Big( \bigcup_{\j\in\D^{n_1}\setminus V_{n_1}}\vp_{\j}(F) \Big) = \C_{n_1} \cap (\C'_{n_1}\cup\vp_{\i|_{n_1}}(F)) = \C_{n_1}\cap\vp_{\i|_{n_1}}(F)
    \]
    is a singleton. By Lemma~\ref{lem:evenfragile}, $F$ is fragile and we obtain a contradiction. This proves the claim.

    Let $\vp_{\eta}(F)\subset\C_{n_1}$ be a level-$n_1$ cell as in the above claim. Recall from our hypothesis in the beginning that $\vp_{\i|_{n_1+1}}(F)$ is the only level-$(n_1+1)$ cell in $\vp_{\i|_{n_1}}(F)$ meeting $\vp_{\eta}(F)$. So it suffices to discuss the following two cases.

    \textbf{Case 1}. There are exactly two level-$(n_1+1)$ cells in $\vp_{\eta}(F)$ which intersects $\vp_{\i|_{n_1+1}}(F)$. Rotating or reflecting if necessary, the first two cases (from left to right) in Figure~\ref{fig:nonempty} illustrate all possibilities. Note that in both cases, we have $(0,0),(N-1,N-1)\in\D$. Moreover, if it is as the second case in Figure~\ref{fig:nonempty}, we also have $(N-1,0),(0,N-1)\in\D$. But it then follows that
    \[
        \vp_{\i|_{n_1}}(\vp_{(0,0)}(F)) \cap \vp_{\eta}(F) \supset \vp_{\i|_{n_1}}(\vp_{(0,0)}(F)) \cap \vp_{\eta}(\vp_{(0,N-1)}(F)) \neq\varnothing
    \]
    and
    \[
        \vp_{\i|_{n_1}}(\vp_{(N-1,0)}(F)) \cap \vp_{\eta}(F) \supset \vp_{\i|_{n_1}}(\vp_{(N-1,0)}(F)) \cap \vp_{\eta}(\vp_{(N-1,N-1)}(F)) \neq\varnothing,
    \]
    since these intersections of two level-$(n_1+1)$ cells are scaled copies of $\vp_{\i|_{n_1}}(F) \cap \vp_{\eta}(F)$.
    Thus there are at least two level-$(n_1+1)$ cells in $\vp_{\i|_{n_1}}(F)$ that intersects $\vp_\eta(F)\subset\C_{n_1}$. This is a contradiction.

    Now let us consider the first case in Figure~\ref{fig:nonempty}. For convenience, write $i_{n_1+1}=(a,0)$. Since $\vp_{\i|_{n_1+1}}(F)$ is the only level-$(n_1+1)$ cell in $\vp_{\i|_{n_1}}(F)$ which intersects $\vp_{\eta}(F)$, it is not hard to see that $(a-1,0),(a+1,0)\notin\D$. Recall that $i_{n_1+1}\cdots i_{n_2}$ is an essential cut vertex. In particular, $i_{n_1+1}$ is a cut vertex of $\Gamma_1$. But this contradicts Lemma~\ref{lem:notacutpt}.

    \begin{figure}[htbp]
        \centering
            \begin{tikzpicture}[scale=0.8]
                \draw[thick] (-4,0) rectangle (-1.5,2.5);
                \draw[thick] (-4,-2.5) rectangle (-1.5,0);
                \draw[thick] (-3.5,-0.5) rectangle (-3,0);
                \draw[thick] (-3.125,-0.125) rectangle (-3,0);
                \draw[thick] (-3,-0.5) rectangle (-2.5,0);
                \draw[thick] (-3,0) rectangle (-2.875,0.125);
                \draw[thick] (-3,0) rectangle (-2.5,0.5);
                \node[font=\fontsize{16}{1}\selectfont] at(-2.75,-1.25) {$\eta$};
                \node[font=\fontsize{16}{1}\selectfont] at(-2.75,1.5) {$\i|_{n_1}$};
                \draw[->,thick,red] (-2.75,0.65) to (-2.75,0.25);
                \node[font=\fontsize{8}{1}\selectfont,red] at(-2.75,0.85) {$\i|_{n_1+1}$};
                \draw[thick] (0,0) rectangle (2.5,2.5);
                \draw[thick] (0,-2.5) rectangle (2.5,0);
                \draw[thick] (0.875,-0.125) rectangle (1,0);
                \draw[thick] (0.5,-0.5) rectangle (1,0);
                \draw[thick] (1,0) rectangle (1.5,0.5);
                \draw[thick] (1,0) rectangle (1.125,0.125);
                \draw[thick] (1.375,0) rectangle (1.5,0.125);
                \draw[thick] (1.5,-0.5) rectangle (2,0);
                \draw[thick] (1.5,-0.125) rectangle (1.625,0);
                \node[font=\fontsize{16}{1}\selectfont] at(1.25,-1.25) {$\eta$};
                \node[font=\fontsize{16}{1}\selectfont] at(1.25,1.5) {$\i|_{n_1}$};
                \draw[->,thick,red] (1.25,0.65) to (1.25,0.25);
                \node[font=\fontsize{8}{1}\selectfont,red] at(1.25,0.85) {$\i|_{n_1+1}$};
                \draw[thick] (4,0) rectangle (6.5,2.5);
                \draw[thick] (4,-2.5) rectangle (6.5,0);
                \draw[thick] (4.5,-0.5) rectangle (5,0);
                \draw[thick] (4.875,-0.125) rectangle (5,0);
                \draw[thick] (5,-0.5) rectangle (5.5,0);
                \draw[thick] (5,0) rectangle (5.5,0.5);
                \draw[thick] (5,0) rectangle (5.125,0.125);
                \draw[thick] (5.375,0) rectangle (5.5,0.125);
                \draw[thick] (5.5,-0.5) rectangle (6,0);
                \draw[thick] (5.5,-0.125) rectangle (5.625,0);
                \node[font=\fontsize{16}{1}\selectfont] at(5.25,-1.25) {$\eta$};
                \node[font=\fontsize{16}{1}\selectfont] at(5.25,1.5) {$\i|_{n_1}$};
                \draw[->,thick,red] (5.25,0.65) to (5.25,0.25);
                \node[font=\fontsize{8}{1}\selectfont,red] at(5.25,0.85) {$\i|_{n_1+1}$};
            \end{tikzpicture}
            \caption{Local structure between $\vp_{\i|_{n_1}}(F)$ and $\vp_\eta(F)$}
            \label{fig:nonempty}
    \end{figure}
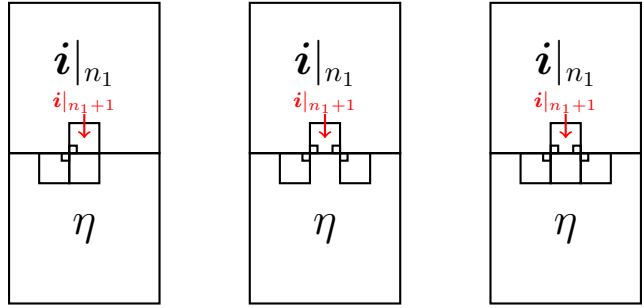

    \textbf{Case 2}. There are three level-$(n_1+1)$ cells in $\vp_{\eta}(F)$ which intersects $\vp_{\i|_{n_1+1}}(F)$. Please see the third figure in Figure~\ref{fig:nonempty} for an illustration. In this case, it is easy to see that
    \[
        \{(0,0),(N-1,0),(0,N-1),(N-1,N-1)\} \subset \D
    \]
    and a contradiction immediately follows as in Case 1.

    The existence of $j$ can be showed similarly. In fact, if we denote by $\mathscr{D}_n$ the union of level-$n$ cells contained in $\C_{j_*}$ (recall this notation in the beginning of this subsection), then applying an analogous argument as above, one can find $j\in \D\setminus \{i_{n_1+1}\}$ such that $\vp_{\i|_{n_1}j}(F) \cap \mathscr{D}_{n_1}\not=\varnothing$. Since $\mathscr{D}_{n_1}\subset\C'_{n_1}$, this completes the proof.
\end{proof}

\begin{remark}\label{rem:bbnonempty}
    Let $i,j$ be two digits as in the above lemma. Since $\C_{n_1+1}$ (resp. $\mathscr{D}_{n_1+1}$) collects all level-$(n_1+1)$ cells contained in the connected component $\C_{i_*}$ (resp. $\C_{j_*}$), we see that $\vp_{\i|_{n_1}i}(F)\subset\C_{n_1+1}$ (resp. $\vp_{\i|_{n_1}j}(F)\subset\mathscr{D}_{n_1+1}\subset\C'_{n_1+1}$). In particular, $i,j$ must be distinct.
\end{remark}

Now we define
\begin{equation}\label{eq:b}
    B = \bigcup\{\vp_{\j}(F): \j\in\D^{n_2}, \i|_{n_1}\prec\j, \vp_{\j}(F)\subset \C_{n_2}\}
\end{equation}
and
\begin{equation}\label{eq:b'}
    B' = \bigcup\{\vp_{\j}(F): \j\in\D^{n_2}, \i|_{n_1}\prec\j, \vp_{\j}(F)\subset \C'_{n_2}\}.
\end{equation}
That is to say, $B$ (resp. $B'$) is the union of those level-$n_2$ cells in the level-$n_1$ cell $\vp_{\i_{n_1}}(F)$ which are contained in $\C_{n_2}$ (resp. $\C'_{n_2}$). Since $\C_{n_2}\cap \C'_{n_2}=\varnothing$, $B\cap B'=\varnothing$. 

\begin{corollary}\label{lem:bbnonempty}
    There is at least one level-$(n_1+1)$ cell contained in $B$ (resp. $B'$). In particular, both of $B$ and $B'$ are non-empty.
\end{corollary}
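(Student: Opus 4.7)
The plan is to invoke Lemma~\ref{lem:5-3} (together with Remark~\ref{rem:bbnonempty}) to produce two distinct digits $i,j\in\D\setminus\{i_{n_1+1}\}$ with $\vp_{\i|_{n_1}i}(F)\subset\C_{n_1+1}$ and $\vp_{\i|_{n_1}j}(F)\subset\C'_{n_1+1}$, and then to upgrade these level-$(n_1+1)$ cells to full sub-collections of $B$ and $B'$ by a refinement argument.

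More precisely, I will first note the (already stated) monotonicity $\C_{n_1+1}\subset\C_{n_2}$ and $\C'_{n_1+1}\subset\C'_{n_2}$, which follows directly from the definitions of $V_n,V'_n$: every level-$(n_1+1)$ cell lying in the connected component $\C_{i_*}$ (resp.\ $\C_{j_*}$) is a union of level-$n_2$ subcells, each of which again sits inside $\C_{i_*}$ (resp.\ $\C_{j_*}$) and hence is counted in $\C_{n_2}$ (resp.\ $\C'_{n_2}$).

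The key step is then the following: once $\vp_{\i|_{n_1}i}(F)\subset\C_{n_1+1}\subset\C_{n_2}$, every level-$n_2$ cell $\vp_{\i|_{n_1}i\bm{k}}(F)$ with $\bm{k}\in\D^{n_2-n_1-1}$ satisfies $\i|_{n_1}\prec\i|_{n_1}i\bm{k}$ and $\vp_{\i|_{n_1}i\bm{k}}(F)\subset\vp_{\i|_{n_1}i}(F)\subset\C_{n_2}$. By the definition \eqref{eq:b} of $B$, each such cell belongs to $B$, so
\[
    \vp_{\i|_{n_1}i}(F) \;=\; \bigcup_{\bm{k}\in\D^{n_2-n_1-1}} \vp_{\i|_{n_1}i\bm{k}}(F) \;\subset\; B.
\]
Thus $B$ contains the entire level-$(n_1+1)$ cell $\vp_{\i|_{n_1}i}(F)$ and is in particular non-empty. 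The same argument applied to $j$ and \eqref{eq:b'} shows $\vp_{\i|_{n_1}j}(F)\subset B'$, so $B'$ likewise contains a full level-$(n_1+1)$ cell.

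There is no real obstacle here beyond correctly unpacking the definitions and observing that $\C_{n_1+1}\subset\C_{n_2}$; the whole statement is essentially a refinement corollary of Lemma~\ref{lem:5-3}.
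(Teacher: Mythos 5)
Your proof is correct and follows exactly the route the paper intends: the corollary is stated without explicit proof precisely because it is the combination of Lemma~\ref{lem:5-3}, Remark~\ref{rem:bbnonempty}, the monotonicity of $\{\C_n\}$, and the observation that a level-$(n_1+1)$ cell contained in $\C_{n_2}$ (resp.\ $\C'_{n_2}$) decomposes into level-$n_2$ subcells each meeting the defining conditions of $B$ (resp.\ $B'$). Your write-up just makes these steps explicit, so there is nothing to add.
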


By the definitions of $B$ and $B'$, it is clear that
\begin{equation}\label{eq:5-4}
    \P_\omega = \{t\in\P: \vp_{\i|_{n_1}}(\vp_{\omega(t)}(F)) \subset B\} \quad\text{and}\quad \P'_\omega = \{t\in\P: \vp_{\i|_{n_1}}(\vp_{\omega(t)}(F)) \subset B' \},
\end{equation}
where $\P_\omega,\P'_\omega$ are as in~\eqref{eq:pomegaprime}. Since $\C_{n_2}\cup\C'_{n_2}=\bigcup_{\j\in\D^{n_2}\setminus\{\i|_{n_2}\}}\vp_{\j}(F)$, we also have
\begin{equation*}\label{eq:ucupu'2}
    \bigcup_{\j\in\D^{|\omega|}\setminus\{\omega\}}\vp_{\j}(F) = \vp_{\i|_{n_1}}^{-1}\Big( \bigcup_{\j\in\D^{|\omega|}\setminus\{\omega\}}\vp_{\i|_{n_1}\j}(F) \Big) = \vp_{\i|_{n_1}}^{-1}(B) \cup \vp_{\i|_{n_1}}^{-1}(B') =: U\cup U'.
\end{equation*}
As a result,
\begin{align*}
    \bigcup_{\j\in\D^{2|\omega|}\setminus\{\omega\omega\}}\vp_{\j}(F) &= \Big( \bigcup_{\j\in\D^{|\omega|}\setminus\{\omega\}}\vp_{\j}(F) \Big) \cup \vp_{\omega}\Big( \bigcup_{\j\in\D^{|\omega|}\setminus\{\omega\}}\vp_{\j}(F) \Big) \\
    &= (U\cup U') \cup \vp_{\omega}(U\cup U') \\
    &= (U\cup\vp_\omega(U)) \cup (U'\cup\vp_\omega(U')).
\end{align*}
Since $B\cap B'=\varnothing$, we have $U\cap U'=\varnothing$ and hence $\vp_{\omega}(U)\cap\vp_{\omega}(U')=\varnothing$.

\begin{lemma}
    The following facts hold.
    \begin{enumerate}
        \item[(1).] There are distinct $i,j\in\D$ such that $\vp_i(F)\subset U$ and $\vp_j(F)\subset U'$;
        \item[(2).] Both $\vp_\omega(U)$ and $\vp_\omega(U')$ cannot intersect $U$ and $U'$ simultaneously.
    \end{enumerate}
\end{lemma}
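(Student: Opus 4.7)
I would dispatch (1) by applying Corollary~\ref{lem:bbnonempty} to obtain a level-$(n_1+1)$ cell $\vp_{\i|_{n_1}i}(F)\subset B$ and a level-$(n_1+1)$ cell $\vp_{\i|_{n_1}j}(F)\subset B'$; applying $\vp_{\i|_{n_1}}^{-1}$ then gives $\vp_i(F)\subset U$ and $\vp_j(F)\subset U'$, and $B\cap B'=\varnothing$ forces $i\neq j$.

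For (2), my plan is to translate the geometric statement into a combinatorial relation among $\P_\omega,\P'_\omega,\P_{n_1},\P'_{n_1}$, then exploit the hypothesis $\{\P_{n_1},\P'_{n_1}\}=\{\P_{n_2},\P'_{n_2}\}$. For each $t\in\P$, I would write $\tau_t$ for the unit shift in direction $t$ and set $\Sigma_t:=F\cap(F+\tau_t)$. A direct unpacking (two distinct level-$|\omega|$ cells meet only on shared faces, so any intersection point is routed through some neighbor $\omega(t)$) then yields: for $A,C\in\{U,U'\}$, $\vp_\omega(A)\cap C\neq\varnothing$ if and only if there is $t\in\P$ with $\vp_{\omega(t)}(F)\subset C$ and $\Sigma_t\cap A\neq\varnothing$. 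Writing $R_A:=\{t\in\P:\Sigma_t\cap A\neq\varnothing\}$, the goal becomes: $R_U$ cannot meet both $\P_\omega$ and $\P'_\omega$, and likewise for $R_{U'}$.

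The heart of the argument is the location sub-claim $R_U\cap(\P_\omega\cup\P'_\omega)\subset\P_{n_1}$ and $R_{U'}\cap(\P_\omega\cup\P'_\omega)\subset\P'_{n_1}$. To prove it I would fix $t\in\P_\omega\cup\P'_\omega\subset\P_{n_2}\cup\P'_{n_2}$; by the assumed equality of unordered pairs this forces $t\in\P_{n_1}\cup\P'_{n_1}$, so $\i|_{n_1}(t)$ is realized as a genuine cell. Pushing $\Sigma_t\cap U\neq\varnothing$ forward by $\vp_{\i|_{n_1}}$ produces $B\cap\vp_{\i|_{n_1}(t)}(F)\neq\varnothing$, and since $B\subset\C_{i_*}$ while $\vp_{\i|_{n_1}(t)}(F)$ is a connected level-$n_1$ cell sitting inside $\bigcup_{\j\in\D^M\setminus\{\i\}}\vp_\j(F)$, it must lie entirely in $\C_{i_*}$, giving $t\in\P_{n_1}$. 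The case of $R_{U'}$ is the mirror image, using that $B'\subset\C'_{n_2}$ lies in the union of connected components of $\bigcup_{\j\in\D^M\setminus\{\i\}}\vp_\j(F)$ different from $\C_{i_*}$.

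From the sub-claim the conclusion follows via two cases. In Case A ($\P_{n_1}=\P_{n_2}$, $\P'_{n_1}=\P'_{n_2}$) one obtains $R_U\cap\P'_\omega\subset\P_{n_1}\cap\P'_{n_2}=\P_{n_1}\cap\P'_{n_1}=\varnothing$ and $R_{U'}\cap\P_\omega\subset\P'_{n_1}\cap\P_{n_2}=\varnothing$; in Case B ($\P_{n_1}=\P'_{n_2}$, $\P'_{n_1}=\P_{n_2}$) the symmetric intersections give $R_U\cap\P_\omega=\varnothing$ and $R_{U'}\cap\P'_\omega=\varnothing$. Either way, $\vp_\omega(U)$ and $\vp_\omega(U')$ each miss at least one of $U, U'$, as required. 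The main obstacle will be the location sub-claim: without the equality $\{\P_{n_1},\P'_{n_1}\}=\{\P_{n_2},\P'_{n_2}\}$ there is no guarantee that $\i|_{n_1}(t)$ is a genuine cell, and the entire component-by-component identification of $\vp_{\i|_{n_1}(t)}(F)$ with $\C_{i_*}$ or its complement collapses.
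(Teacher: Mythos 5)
Your proposal is correct and follows essentially the same route as the paper: part (1) is verbatim the paper's argument via Corollary~\ref{lem:bbnonempty}, and for part (2) your sets $\Sigma_t$ and $R_A$ are an explicit combinatorial restatement of the paper's Lemma~\ref{lem:position} together with the containments \eqref{eq:ucapvpu1}--\eqref{eq:5-7}, after which the same two-case analysis on $\{\P_{n_1},\P'_{n_1}\}=\{\P_{n_2},\P'_{n_2}\}$ finishes the proof. Your ``location sub-claim'' is just the positive reformulation of the paper's disjointness $\C_{n_1}\cap\C'_{n_2}=\varnothing$ in \eqref{eq:5-7} (using $\P_\omega\cup\P'_\omega\subset\P_{n_2}\cup\P'_{n_2}=\P_{n_1}\cup\P'_{n_1}$ to guarantee that $\i|_{n_1}(t)$ is well defined), so the two arguments are interchangeable.
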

\begin{proof}
    By Lemma~\ref{lem:bbnonempty}, there are distinct $i,j\in\D$ such that $\vp_{\i|_{n_1}i}(F) \subset B$ and $\vp_{\i|_{n_1}j}(F) \subset B'$. Thus $\vp_i(F)\subset\vp_{\i|_{n_1}}^{-1}(B) = U$ and $\vp_j(F)\subset\vp_{\i|_{n_1}}^{-1}(B') = U'$. This establishes (1).

    For (2), note that by the definition of $U$, Definition~\ref{de:position}, \eqref{eq:5-4} and Lemma~\ref{lem:qsubsetp},
    \begin{align}
        U\cap\vp_\omega(U') &= \vp_{\i|_{n_1}}^{-1}(B) \cap \vp_\omega(U') \notag \\
        &= \Big( \bigcup\{\vp_{\j}(F):\j\in\D^{|\omega|}\setminus\{\omega\},\vp_{\i|_{n_1}\j}(F)\subset B\} \Big) \cap \vp_{\omega}(U') \notag \\
        &= \Big( \bigcup\{\vp_{\omega(t)}(F): t\in\P, \vp_{\i|_{n_1}}(\vp_{\omega(t)}(F))\subset B\} \Big) \cap \vp_{\omega}(U') \notag \\
        &= \Big( \bigcup_{t\in\P_\omega}\vp_{\omega(t)}(F) \Big)\cap \vp_{\omega}(U') \subset \Big( \bigcup_{t\in \P_{n_2}}\vp_{\omega(t)}(F) \Big) \cap \vp_{\omega}(U').\label{eq:ucapvpu1}
    \end{align}
    Similarly,
    \begin{equation}\label{eq:ucapvpu2}
        U'\cap\vp_\omega(U') \subset \Big( \bigcup_{t\in \P'_{n_2}}\vp_{\omega(t)}(F) \Big) \cap \vp_{\omega}(U').
    \end{equation}
    Also note that $\vp_{\i|_{n_1}}(U')=B'\subset \C'_{n_2}$.
    Since $\{\C_n\}_{n=1}^M$ is increasing, we have
    \begin{equation}\label{eq:5-7}
        \Big( \bigcup_{t\in \P_{n_1}}\vp_{\i|_{n_1}(t)}(F) \Big) \cap \vp_{\i|_{n_1}}(U') \subset \C_{n_1} \cap \C'_{n_2} \subset \C_{n_2}\cap\C'_{n_2} = \varnothing.
    \end{equation}

    Recall that $\{\P_{n_1}, \P'_{n_1}\} = \{\P_{n_2}, \P'_{n_2}\}$.
    \begin{enumerate}[(I)]
        \item $\P_{n_1}=\P_{n_2}$. In this case,
        combining Lemma~\ref{lem:position} with~\eqref{eq:ucapvpu1} and~\eqref{eq:5-7},
        \[
            U\cap\vp_{\omega}(U') \subset \Big( \bigcup_{t\in \P_{n_1}}\vp_{\omega(t)}(F) \Big) \cap \vp_{\omega}(U') = \varnothing.
        \]
        \item $\P_{n_1}=\P'_{n_2}$. In this case,
        combining Lemma~\ref{lem:position}  with~\eqref{eq:ucapvpu2} and~\eqref{eq:5-7},
        \[
            U'\cap\vp_{\omega}(U') \subset \Big( \bigcup_{t\in \P_{n_1}}\vp_{\omega(t)}(F) \Big) \cap \vp_{\omega}(U') = \varnothing.
        \]
    \end{enumerate}
    Similarly, $\vp_\omega(U)$ cannot intersect $U$ and $U'$ simultaneously. This establishes (2) and hence completes the proof.
\end{proof}

\begin{proof}[Proof of Theorem~\ref{thm:goodcp}]
    Note that in the above proof, we actually show that the conditions in Proposition~\ref{prop:twogoodmeanscutpt} are fulfilled with $\i=\omega$, $X_1=U$ and $X'_1=U'$. As a consequence, $\omega^k$ is an essential cut vertex of $\Gamma^{k|\omega|}$ for all $k\geq 1$. By Lemma~\ref{lem:ingoodthenin1}, $\Gamma_n$ has essential cut vertices for all $n\geq 1$. Recalling Lemma~\ref{lem:pra1}, the GSC $F$ contains cut points.
\end{proof}

By Remark~\ref{rem:fixedptcutpt}, if a non-fragile connected GSC has cut points, then we can even find a cut point which is the fixed point of $\vp_{\i}$ for some $\i\in \D^*$.

\section{Proof of Proposition~\ref{prop:stillgood}}

To show Proposition~\ref{prop:stillgood}, suppose on the contrary that $i_2\cdots i_n$ is not an essential cut vertex of $\Gamma_{n-1}$. We will prove that this either leads to a contradiction (as in Case 1, Subcase 2.1 and Case 3 later) or that $i_3\cdots i_n$ is an essential cut vertex of $\Gamma_{n-2}$ (as in Subcase 2.2 later).

Our hypothesis implies that $\{i\D^{n-2}:i\in\D\setminus\{i_2\}\}$ lies in one connected component of $\Gamma_{n-1}-\{i_2\cdots i_n\}$. By Lemma~\ref{lem:equivalent} again, the set
\begin{equation}\label{eq:notationofe}
    E:=\bigcup_{i\in\D\setminus\{i_2\}} \vp_{i}(F)
\end{equation}
is a subset of some connected component of $\bigcup_{\j\in\D^{n-1}\setminus\{i_2\cdots i_n\}}\vp_{\j}(F)$. Therefore, $\vp_{i_1}(E)=\bigcup_{j\in \D\setminus \{i_2\}} \vp_{i_1j}(F)$ is contained in exactly one connected component, denoted by $\C$, of $\bigcup_{\j\in\D^n\setminus\{\i\}}\vp_{\j}(F)$.

\begin{lemma}\label{lem:pra2}
    There exists $i_*\in\D\setminus\{i_1\}$ such that $\vp_{i_*}(F)\cap\vp_{i_1i_2}(F)\neq\varnothing$ but $\vp_{i_*}(F)\cap \C=\varnothing$.
\end{lemma}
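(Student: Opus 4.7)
My plan is to argue by contradiction: I would suppose no such $i_*$ exists and then exhibit a separation of $F$ into two non-empty closed subsets, contradicting the standing assumption that $F$ is connected.

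First I would verify that the set $A^c:=\{i\in\D\setminus\{i_1\}:\vp_i(F)\cap\C=\varnothing\}$ is non-empty. Since $\i$ is essential in $\Gamma_n$, Definition~\ref{de:goodcp} combined with Lemma~\ref{lem:equivalent} supplies digits $i,j\in\D\setminus\{i_1\}$ whose level-$1$ cells $\vp_i(F)$ and $\vp_j(F)$ lie in distinct components of $\bigcup_{\eta\in\D^n\setminus\{\i\}}\vp_\eta(F)$; since $\C$ is only one such component, at least one of them must be disjoint from $\C$. Putting $A:=(\D\setminus\{i_1\})\setminus A^c$, we have $\vp_w(F)\subset\C$ for every $w\in A$.

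Next, suppose toward contradiction that $\vp_i(F)\cap\vp_{i_1i_2}(F)=\varnothing$ for every $i\in A^c$. Combined with $\vp_{i_1}(E)\subset\C$ (which forces $\vp_i(F)\cap\vp_{i_1}(E)=\varnothing$ for such $i$) and the decomposition $\vp_{i_1}(F)=\vp_{i_1i_2}(F)\cup\vp_{i_1}(E)$, this yields $\vp_i(F)\cap\vp_{i_1}(F)=\varnothing$ for every $i\in A^c$. Moreover, for any $w\in A$ we have $\vp_w(F)\subset\C$, and hence $\vp_i(F)\cap\vp_w(F)\subset\vp_i(F)\cap\C=\varnothing$. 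Setting
\[
X:=\vp_{i_1}(F)\cup\bigcup_{w\in A}\vp_w(F),\qquad Y:=\bigcup_{w\in A^c}\vp_w(F),
\]
these two disjointness statements give $X\cap Y=\varnothing$, while $F=X\cup Y$ with both $X,Y$ non-empty and compact. This contradicts the connectedness of $F$ and completes the argument.

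I do not foresee a serious obstacle; the only point requiring care is correctly keeping track of which level-$1$ cells lie in $\C$ and which do not, but both inclusions follow immediately from the very definition of $\C$ together with the hypothesis made for contradiction. In particular, the proof does not need to revisit the hypothesis ``$i_2\cdots i_n$ is not essential in $\Gamma_{n-1}$'' directly---that non-essentiality was already packaged into the sentence ``$\vp_{i_1}(E)\subset\C$'' which precedes the statement of the lemma, and is exactly the inclusion that allows the argument above to eliminate $\vp_{i_1}(E)$ from the analysis.
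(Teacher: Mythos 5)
Your proof is correct, and it takes a genuinely different route from the paper's. The paper argues in two stages, both times invoking Lemma~\ref{lem:connected}: it first shows that if no level-$1$ cell besides $\vp_{i_1}(F)$ met $\vp_{i_1i_2}(F)$, then $\C\cup\bigcup_{j\in\D\setminus\{i_1\}}\vp_j(F)$ would be a connected subset of $\bigcup_{\j\in\D^n\setminus\{\i\}}\vp_{\j}(F)$, contradicting the essentiality of $\i$; it then shows that if every level-$1$ cell meeting $\vp_{i_1i_2}(F)$ also met $\C$, one would similarly obtain a connected subset containing all level-$1$ cells except $\vp_{i_1}(F)$, contradicting essentiality again. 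You instead use essentiality only once --- to guarantee that $A^c\neq\varnothing$, i.e.\ that some level-$1$ cell is disjoint from $\C$ --- and then, under the negation of the lemma, exhibit a direct two-block disconnection $F=X\cup Y$ into non-empty disjoint compact pieces, contradicting the connectedness of $F$ itself rather than the essentiality of $\i$. Your version is shorter and avoids Lemma~\ref{lem:connected} entirely. The one step you leave implicit --- that $\vp_w(F)\cap\C\neq\varnothing$ forces $\vp_w(F)\subset\C$ for $w\in\D\setminus\{i_1\}$, because $\vp_w(F)$ is a connected subset of $\bigcup_{\j\in\D^n\setminus\{\i\}}\vp_{\j}(F)$ and $\C$ is a connected component of that union --- is the same observation you already need to conclude $A^c\neq\varnothing$, so it is harmless. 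You are also right that the non-essentiality of $i_2\cdots i_n$ enters both proofs only through the inclusion $\vp_{i_1}(E)\subset\C$, which is exactly what lets you reduce $\vp_i(F)\cap\vp_{i_1}(F)$ to $\vp_i(F)\cap\vp_{i_1i_2}(F)$ for cells $\vp_i(F)$ disjoint from $\C$.
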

\begin{proof}
    Let $B_1=\C\cup\vp_{i_1}(F)$ and $A=\vp_{i_1}(F)\setminus \C\neq\varnothing$. Then $B_1=\C\cup A$ is a disjoint union. It follows from $\C\supset\vp_{i_1}(E)$ that $B_1$ is connected and $A\subset \vp_{i_1i_2}(F)$.

    Suppose that $\vp_{j}(F)\cap\vp_{i_1i_2}(F)=\varnothing$ for all $j\in\D\setminus\{i_1\}$. Then
    \begin{equation}\label{eq:vpjcapvpi1-c}
        \vp_{j}(F) \cap A \subset \vp_{j}(F) \cap\vp_{i_1i_2}(F)=\varnothing, \quad \forall j\in\D\setminus\{i_1\}.
    \end{equation}
    Note that $\{B_1\}\cup\{\vp_{j}(F):j\in\D\setminus\{i_1\}\}$ is a family of connected compact sets such that their union $F$ is connected. Since $B_1\setminus A=\C$ is also connected, by \eqref{eq:vpjcapvpi1-c} and Lemma~\ref{lem:connected},
    \[
        G:=\C \cup \Big( \bigcup_{j\in\D\setminus\{i_1\}}\vp_{j}(F) \Big) = (B_1\setminus A) \cup \Big( \bigcup_{j\in\D\setminus\{i_1\}}\vp_{j}(F) \Big)
    \]
    is connected. By the definition of $\C$, $G\subset\bigcup_{\j\in \D^n\setminus \{\i\}} \vp_{\j}(F)$. Thus $\bigcup_{j\in \D\setminus\{i_1\}} \vp_j(F)$ is contained entirely in some component of $\bigcup_{\j\in \D^n\setminus \{\i\}} \vp_{\j}(F)$.
    This contradicts the essentiality of $\i$ and hence the set $J:=\{j\in\D\setminus\{i_1\}:\vp_j(F)\cap\vp_{i_1i_2}(F)\neq\varnothing\}$ is non-empty.

    Moreover, suppose that $\vp_j(F)\cap \C\neq\varnothing$ for all $j\in J$. Then $\bigcup_{j\in J}\vp_j(F)\subset\C$ and hence $\bigcup_{j\in J\cup\{i_1\}} \vp_j(F)\subset B_1$. Note that
    \[
        \vp_i(F)\cap A \subset \vp_i(F)\cap\vp_{i_1i_2}(F) =\varnothing, \quad \forall i\in\D\setminus (J\cup\{i_1\}).
    \]
    Applying Lemma~\ref{lem:connected} again, we see that
    \[
        H:=\C \cup \Big( \bigcup_{i\in \D\setminus(J\cup\{i_1\})}\vp_i(F) \Big) = (B_1\setminus A) \cup \Big( \bigcup_{i\in\D\setminus(J\cup\{i_1\})}\vp_i(F) \Big)
    \]
    is connected. Since $\bigcup_{j\in J}\vp_j(F)\subset\C$, $H$ contains all level-$1$ cells except $\vp_{i_1}(F)$. Since $H\subset \bigcup_{\j\in \D^n\setminus \{\i\}} \vp_{\j}(F)$, this contradicts the essentiality of $\i$.
\end{proof}


\begin{lemma}\label{lem:pra3}
    Let $i_*\in \D$ be as in Lemma~\ref{lem:pra2}. If there exists $j\in\D\setminus\{i_1,i_*\}$ such that $\vp_{j}(F)\cap\vp_{i_1i_2}(F)\neq\varnothing$, then $\vp_{j}(F)\cap\vp_{i_*}(F)\neq\varnothing$ and $\vp_{j}(F)\cap \C=\varnothing$.
\end{lemma}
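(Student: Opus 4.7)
The plan is to exploit the geometric fact that both $\vp_{i_*}(F)$ and $\vp_j(F)$ are level-$1$ cells each of which must intersect the much smaller level-$2$ cell $\vp_{i_1i_2}(F)$ located somewhere inside $\vp_{i_1}([0,1]^2)$. Since the bounding squares of $\vp_{i_*}$ and $\vp_j$ have side length $1/N$ while $\vp_{i_1i_2}([0,1]^2)$ has side length $1/N^2$, the position of $\vp_{i_1i_2}$ within $\vp_{i_1}$ is heavily constrained by the grid structure.

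First I would argue, by a direct inspection of the grid, that $i_2$ must be one of the four corner digits $(0,0),(N-1,0),(0,N-1),(N-1,N-1)$. Indeed, if $i_2$ were in the interior of the $N\times N$ grid inside $\vp_{i_1}$, then $\vp_{i_1i_2}([0,1]^2)$ would be disjoint from the boundary of $\vp_{i_1}([0,1]^2)$, so no external level-$1$ square could meet it; and if $i_2$ lay on an edge of the grid but not at a corner (say $i_2=(a,0)$ with $0<a<N-1$), then only the unique level-$1$ cell directly below $\vp_{i_1}$ could overlap $\vp_{i_1i_2}([0,1]^2)$, contradicting the existence of two \emph{distinct} cells $\vp_{i_*}(F)$ and $\vp_j(F)$ both meeting $\vp_{i_1i_2}(F)$.

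Second, once $i_2$ is known to be a corner digit, I apply Lemma~\ref{lem:fourvertex} with $\alpha=i_2$ to both hypotheses $\vp_{i_*}(F)\cap\vp_{i_1i_2}(F)\neq\varnothing$ and $\vp_j(F)\cap\vp_{i_1i_2}(F)\neq\varnothing$. This yields a single common point
\[
    p := \vp_{i_1}\Bigl(\tfrac{i_2}{N-1}\Bigr) \in \vp_{i_1}(F)\cap\vp_{i_*}(F) \quad\text{and}\quad p \in \vp_{i_1}(F)\cap\vp_j(F),
\]
so that $p\in\vp_{i_*}(F)\cap\vp_j(F)$, which establishes the first assertion of the lemma.

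Finally, for the second assertion, note that $\vp_j(F)$ is connected (as a similar image of $F$) and, since $j\neq i_1$, it is a subset of $\bigcup_{\k\in\D^n\setminus\{\i\}}\vp_{\k}(F)$; hence it lies entirely in one connected component of that union. If this component were $\C$, then $p\in\vp_j(F)\subset\C$ and thus $p\in\vp_{i_*}(F)\cap\C$, contradicting the defining property $\vp_{i_*}(F)\cap\C=\varnothing$ of $i_*$ from Lemma~\ref{lem:pra2}. Therefore $\vp_j(F)\cap\C=\varnothing$. The only delicate step is the first one, the geometric case analysis excluding interior and edge positions for $i_2$; once that is settled, Lemma~\ref{lem:fourvertex} and the connectedness of $\vp_j(F)$ finish the argument cleanly.
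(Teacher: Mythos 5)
Your proposal is correct and follows essentially the same route as the paper: force $i_2$ to be a corner digit, apply Lemma~\ref{lem:fourvertex} to both $i_*$ and $j$ to produce the common point $\vp_{i_1}(\frac{i_2}{N-1})\in\vp_{i_*}(F)\cap\vp_j(F)$, and then conclude $\vp_j(F)\cap\C=\varnothing$ from $\vp_{i_*}(F)\cap\C=\varnothing$ via connected components. The only difference is that you spell out the grid-geometry case analysis (interior/edge/corner positions of $i_2$) that the paper leaves implicit.
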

\begin{proof}
    If there is such a digit $j$, then both of $\vp_j(F)$ and $\vp_{i_*}(F)$ are level-$1$ cells which intersect the level-$2$ cell $\vp_{i_1i_2}(F)$. Thus $\vp_{i_1i_2}([0,1]^2)$ must locate at one of the corners of the square $\vp_{i_1}([0,1]^2)$, i.e., $i_2\in\{(0,0),(N-1,0),(0,N-1),(N-1,N-1)\}$. From $\vp_j(F)\cap \vp_{i_1i_2}(F)\not=\varnothing$ and Lemma~\ref{lem:fourvertex}, $\vp_{i_1}(\frac{i_2}{N-1})\in \vp_j(F)$. Similarly, $\vp_{i_1}(\frac{i_2}{N-1})\in \vp_{i_*}(F)$. As a result, $\vp_{i_1}(\frac{i_2}{N-1})\in \vp_{j}(F)\cap \vp_{i_*}(F)$, which implies that $\vp_j(F)$ and $\vp_{i_*}(F)$ belong to the same connected component of $\bigcup_{\j\in \D^n\setminus \{\i\}} \vp_{\j}(F)$. Combining this with the fact that $\vp_{i_*}(F)\cap \C=\varnothing$, we have $\vp_{j}(F)\cap \C=\varnothing$.
\end{proof}

For each $i\in \D\setminus \{i_1\}$ satisfying $\vp_i(F)\cap \vp_{i_1i_2}(F)\not=\varnothing$ and $\vp_i(F)\cap \C=\varnothing$, we define
\[
    I(i):=\{ij: j\in\D, \vp_{ij}(F)\cap\vp_{i_1i_2}(F)\neq\varnothing\}.
\]
Now let us prove Proposition~\ref{prop:stillgood} by a case-by-case discussion on the cardinality of $I(i)$. 

\textbf{Case 1}. For each $i\in \D$ satisfying $\vp_i(F)\cap \vp_{i_1i_2}(F)\not=\varnothing$ and $\vp_i(F)\cap \C=\varnothing$, we have $|I(i)|=1$. Fix any such digit and denote it by $i_*$. Then there is exactly one level-$2$ cell in $\vp_{i_*}(F)$ that intersects $\vp_{i_1i_2}(F)$. Recall that $\vp_{i_*}(F)\cap\C=\varnothing$ and $\vp_{i_1}(E)\subset\C$, where $E$ is as in~\eqref{eq:notationofe}. So $\vp_{i_*}(F)\cap\vp_{i_1}(E)=\varnothing$, i.e., $\vp_{i_1i_2}(F)$ is also the only level-2 cell in $\vp_{i_1}(F)$ which intersects $\vp_{i_*}(F)$. By Lemma~\ref{lem:casedisfact}, $\vp_{i_1}(F)\cap\vp_{i_*}(F)$ is a singleton, say $\{x_*\}$. Let
\[
    \D_1 = \{i\in\D: i\D^{n-1} \text{ and } i_*\D^{n-1} \text{ belong to the same connected component of } \Gamma_n-\{\i\}\}
\]
and let $\D_2=\D\setminus\D_1$. Note that $i_1\in\D_2$.

Here is an observation: If $i\in\D\setminus\{i_1\}$ satisfies $\vp_i(F)\cap\vp_{i_1}(F)\neq\varnothing$ but $\vp_i(F)\cap\vp_{i_1i_2}(F)=\varnothing$, then $i\in\D_2
$. In fact, note that for every such $i$,
\[
    \vp_i(F) \cap \vp_{i_1}(E) = \vp_i(F) \cap \vp_{i_1}\Big( \bigcup_{j\in\D\setminus\{i_2\}}\vp_j(F) \Big) = \vp_i(F)\cap\vp_{i_1}(F)\neq\varnothing.
\]
Since $\vp_{i_1}(E)\subset\C$, we have $\vp_i(F)\subset\C$. But recall that $\vp_{i_*}(F)\cap\C=\varnothing$. Thus $i\in\D_2$. As a result,
\begin{align}
    \Big( \bigcup_{j\in\D_1}\vp_{j}(F) \Big) \cap \Big( \bigcup_{j\in\D_2}\vp_{j}(F) \Big) &= \Big( \bigcup_{j\in\D_1}\vp_{j}(F) \Big) \cap \vp_{i_1}(F) \notag \\
    &= \Big( \bigcup_{\substack{j\in\D_1 \\ \vp_{j}(F)\cap\vp_{i_1i_2}(F)\neq\varnothing}}\vp_j(F) \Big)\cap \vp_{i_1}(F). \label{eq:jind1jind2}
\end{align}

\textbf{Subcase 1.1}. The digit $i_*$ is the only element in $\D\setminus\{i_1\}$ such that $\vp_{i_*}(F)\cap\vp_{i_1i_2}(F)\neq\varnothing$. In this case, we immediately have by~\eqref{eq:jind1jind2} that
\[
    \Big( \bigcup_{j\in\D_1}\vp_{j}(F) \Big) \cap \Big( \bigcup_{j\in\D_2}\vp_{j}(F) \Big) = \vp_{i_*}(F) \cap \vp_{i_1}(F) = \{x_*\}.
\]
Thus $F$ is fragile, which leads to a contradiction.

\textbf{Subcase 1.2}. There is some digit $i^*\in\D\setminus\{i_1,i_*\}$ such that $\vp_{i^*}(F)\cap\vp_{i_1i_2}(F)\neq\varnothing$.
We claim that $\vp_{i^*}(F)\cap\vp_{i_1}(F)=\{x_*\}$ and $i^*\in \D_1$ for every such $i^*$. In fact, in this subcase, $\vp_{i_1i_2}([0,1]^2)$ should locate at one of the corners of $\vp_{i_1}([0,1]^2)$. By Lemma~\ref{lem:fourvertex}, we have $\vp_{i_1}(\frac{i_2}{N-1})\in\vp_{i_*}(F)\cap\vp_{i_1}(F)$ and $\vp_{i_1}(\frac{i_2}{N-1})\in\vp_{i^*}(F)\cap\vp_{i_1}(F)$. Thus $x_*=\vp_{i_1}(\frac{i_2}{N-1})$ and $\vp_{i_*}(F)\cap\vp_{i^*}(F)\neq\varnothing$. This further indicates that $\vp_{i^*}(F)\cap\C=\varnothing$, since $\vp_{i_*}(F)\cap\C=\varnothing$ and $\C$ is a connected component. In conclusion, the digit $i^*$ satisfies that $\vp_{i^*}(F)\cap \vp_{i_1i_2}(F)\not=\varnothing$ and $\vp_{i^*}(F)\cap \C=\varnothing$. By our original assumption of Case 1, $|I(i^*)|=1$. Using the same arguments as in the beginning of Case 1, $\vp_{i^*}(F)\cap \vp_{i_1}(F)$ is a singleton so that $\vp_{i^*}(F)\cap \vp_{i_1}(F)=\{x_*\}$. In particular, $\vp_{i^*}(F)\cap\vp_{i_*}(F)\neq\varnothing$, so $i^*\D^{n-1}$ and $i_*\D^{n-1}$ belong to the same connected component of $\Gamma_n-\{\i\}$. In other words, $i^*\in\D_1$, which completes the proof of the claim.

From the claim and \eqref{eq:jind1jind2},
\begin{equation*}
    \Big( \bigcup_{j\in\D_1}\vp_j(F) \Big) \cap \Big( \bigcup_{j\in\D_2}\vp_j(F) \Big) = \Big( \bigcup_{\substack{j\in\D_1 \\ \vp_{j}(F)\cap\vp_{i_1i_2}(F)\neq\varnothing}}\vp_j(F) \Big)\cap \vp_{i_1}(F) = \{x_*\}.
\end{equation*}
So $F$ is fragile and we again arrive at a contradiction.

\textbf{Case 2}. There exists $i_*\in \D$ satisfying the conditions in Lemma~\ref{lem:pra2} with $|I(i_*)|=2$. Rotating and reflecting if necessary, Figures~\ref{fig:pra_case2+3}(A),(B) illustrate all possibilities. Recall the notation $E$ in~\eqref{eq:notationofe}.

\textbf{Subcase 2.1}. Consider the case as in Figure~\ref{fig:pra_case2+3}(A), where $i_2=(b,0)$ and $I=\{i_*(b-1,N-1),i_*(b+1,N-1)\}$ for some $0<b<N-1$. Since $\vp_{i_1i_2}(F)$ intersects both $\vp_{i_*}(\vp_{(b-1,N-1)}(F))$ and $\vp_{i_*}(\vp_{(b+1,N-1)}(F))$,
\[
    \{(0,0),(N-1,0),(0,N-1),(N-1,N-1)\} \subset\D.
\]
Note that at least one of $(0,0)$ and $(N-1,0)$ is not $i_2$, say $(0,0)$. Then
\[
    \C\cap\vp_{i_*}(F) \supset \vp_{i_1}(E)\cap\vp_{i_*}(F) \supset \vp_{i_1}(\vp_{(0,0)}(F)) \cap \vp_{i_*}(\vp_{(0,N-1)}(F))\neq\varnothing.
\]
This is a contradiction.

\begin{figure}[htbp]
    \centering
    \subfloat[Subcase 2.1]
    {
        \begin{minipage}[t]{130pt}
            \centering
            \begin{tikzpicture}[scale=1.3]
                \draw[thick] (0,0) rectangle (2.5,2.5);
                \draw[thick] (0,-2.5) rectangle (2.5,0);
                \draw[thick] (1,0) rectangle (1.5,0.5);
                \draw[thick] (1,0) rectangle (1.1,0.1);
                \draw[thick] (0.5,-0.5) rectangle (1,0);
                \draw[thick] (1.5,-0.5) rectangle (2,0);
                \draw[thick] (0.9,-0.1) rectangle (1,0);
                \draw[thick] (1.5,-0.1) rectangle (1.6,0);
                \draw[thick] (1.4,0) rectangle (1.5,0.1);
                \node[font=\fontsize{20}{1}\selectfont] at(1.25,1.25) {$i_1$};
                \node[font=\fontsize{20}{1}\selectfont] at(1.25,-1.25) {$i_*$};
                \node[font=\fontsize{10}{1}\selectfont] at(1.25,0.3) {$i_1i_2$};
            \end{tikzpicture}
        \end{minipage}
    }
    \subfloat[Subcase 2.2]
    {
        \begin{minipage}[t]{130pt}
            \centering
            \begin{tikzpicture}[scale=1.3]
                \draw[pattern=north west lines] (0,0)--(0.5,0)--(0.5,1)--(1.5,1)--(1.5,0.5)--(2.5,0.5)--(2.5,2.5)--(1,2.5)--(1,2)--(0,2)--(0,0);
                \draw[thick] (1.8,0) rectangle (1.9,0.1);
                \draw[thick] (1.7,-0.1) rectangle (1.8,0);
                \draw[thick] (1.7,0.4) rectangle (1.8,0.5);
                \draw[thick] (1.8,0.4) rectangle (1.9,0.5);
                \draw[thick] (0,0) rectangle (2.5,2.5);
                \draw[thick] (0,-2.5) rectangle (2.5,0);
                \draw[thick] (1.5,0) rectangle (2,0.5);
                \draw[thick] (1.5,0) rectangle (1.6,0.1);
                \draw[thick] (1,-0.5) rectangle (1.5,0);
                \draw[thick] (1.5,-0.5) rectangle (2,0);
                \draw[thick] (1.4,-0.1) rectangle (1.5,0);
                \draw[thick] (1.8,-0.1) rectangle (1.9,0);
                \node[font=\fontsize{20}{1}\selectfont] at(1.25,1.25) {$i_1$};
                \node[font=\fontsize{20}{1}\selectfont] at(1.25,-1.25) {$i_*$};
                \node[font=\fontsize{10}{1}\selectfont] at(1.75,0.25) {$i_1i_2$};
            \end{tikzpicture}
        \end{minipage}
    }
    \subfloat[Case 3]
    {
        \begin{minipage}[t]{130pt}
            \centering
            \begin{tikzpicture}[scale=1.3]
                \draw[thick] (1.5,0) rectangle (4,2.5);
                \draw[thick] (1.5,-2.5) rectangle (4,0);
                \draw[thick] (2.5,0) rectangle (3,0.5);
                \draw[thick] (2.5,0) rectangle (2.6,0.1);
                \draw[thick] (2,-0.5) rectangle (2.5,0);
                \draw[thick] (3,-0.5) rectangle (3.5,0);
                \draw[thick] (2.4,-0.1) rectangle (2.5,0);
                \draw[thick] (3,-0.1) rectangle (3.1,0);
                \draw[thick] (2.9,0) rectangle (3,0.1);
                \draw[thick] (2.5,-0.5) to (3,-0.5);
                \node[font=\fontsize{20}{1}\selectfont] at(2.75,1.25) {$i_1$};
                \node[font=\fontsize{20}{1}\selectfont] at(2.75,-1.25) {$i_*$};
                \node[font=\fontsize{10}{1}\selectfont] at(2.75,0.3) {$i_1i_2$};
            \end{tikzpicture}
        \end{minipage}
    }
    \caption{ Subcases 2.1, 2.2 and Case 3, where the shaded region in (B) illustrates $\vp_{i_1}(E)$}
    \label{fig:pra_case2+3}
\end{figure}
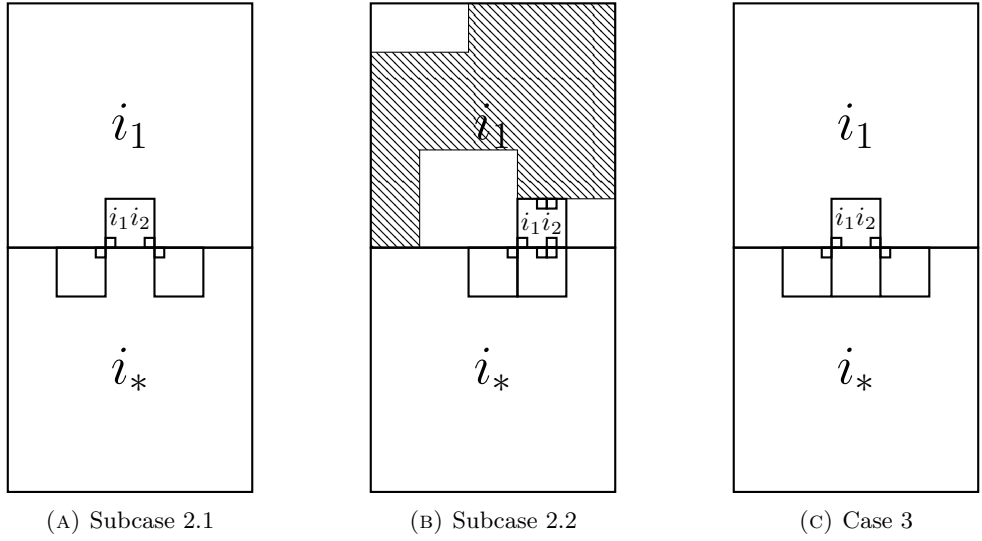

\textbf{Subcase 2.2}. Consider the case as in Figure~\ref{fig:pra_case2+3}(B), where $i_2=(a,0)$ and $I=\{i_*(a-1,N-1),i_*(a,N-1)\}$ for some $0<a\leq N-1$. In particular, $(a-1,N-1),(a,N-1)\in\D$. Moreover, since $\vp_{i_*}(\vp_{(a-1,N-1)}(F))\cap\vp_{i_1i_2}(F)\neq\varnothing$, we have $(0,0),(N-1,N-1)\in\D$. Since $\vp_{i_1}(E)\cap\vp_{i_*}(F)=\varnothing$, we see that $(a-1,0),(a+1,0)\notin\D$ and
\begin{equation}\label{eq:updown}
   \vp_{i_1}(F) \cap \vp_{i_*}(F) = \vp_{i_1}(\vp_{(a,0)}(F))\cap ( \vp_{i_*}(\vp_{(a-1,N-1)}(F)\cup\vp_{(a,N-1)}(F)) ).
\end{equation}

We will prove by contradiction that $i_3\cdots i_n$ is an  essential cut vertex of $\Gamma_{n-2}$ in this subcase. Otherwise, $\bigcup_{j\in\D\setminus\{i_3\}} \vp_j(F)$ is contained in exactly one connected component of $\bigcup_{\j\in\D^{n-2}\setminus\{i_3\cdots i_n\}} \vp_{\j}(F)$. Thus we can find a component $\C'$ of $\bigcup_{\j\in\D^n\setminus\{\i\}}\vp_{\j}(F)$ which contains $\vp_{i_1i_2}(\bigcup_{j\in\D\setminus\{i_3\}} \vp_j(F))$. Note that
\begin{align*}
    \C'\cap\vp_{i_*}(F) &\supset \vp_{i_1i_2}\Big(\bigcup_{j\in\D\setminus\{i_3\}} \vp_j(F)\Big)\cap\vp_{i_*}(F) \\
    &\supset \vp_{i_2i_2}\Big( \bigcup_{j\in\{(0,0),(a,0)\}\setminus\{i_3\}}\vp_j(F) \Big)\cap\vp_{i_*}(F) \neq \varnothing.
\end{align*}
Recall that $\vp_{i_*}(F)\cap\C=\varnothing$. Thus $\C'\cap\C=\varnothing$, i.e., they are different components of $\bigcup_{\j\in\D^n\setminus\{\i\}}\vp_{\j}(F)$.

{\bf Claim}. $(a,1)\notin\D$, $(a+1,1)\in\D$ and $\vp_{(a-1,1)}(F)\cap\vp_{(a,0)}(F)=\varnothing$ (if $(a-1,1)\in\D$).

Firstly, if $(a,1)\in\D$ then $(a,1)\in\D\setminus\{i_2\}$, and we have by~\eqref{eq:updown} and the self-similarity of $F$ that
\begin{align*}
    \C'\cap\C &\supset \vp_{i_1i_2}\Big(\bigcup_{i\in\D\setminus\{i_3\}}\vp_i(F)\Big) \cap \vp_{i_1}\Big(\bigcup_{j\in\D\setminus\{i_2\}}\vp_{j}(F)\Big) \\
    &\supset \vp_{i_1i_2}\Big(\bigcup_{i\in\{(a-1,N-1),(a,N-1)\}\setminus\{i_3\}}\vp_i(F)\Big) \cap \vp_{i_1}(\vp_{(a,1)}(F)) \neq\varnothing,
\end{align*}
which leads to a contradiction. Secondly, if $\vp_{(a-1,1)}(F)\cap\vp_{(a,0)}(F)\neq\varnothing$ then $(N-1,0),(0,N-1)\in\D$, and we will obtain a contradiction as in Subcase 2.1. Finally, combining these two observations and the fact that $(a-1,0),(a+1,0)\notin\D$, we must have $(a+1,1)\in\D$ (otherwise, $F$ is disconnected). This establishes the claim.

But now it is easy to see that
\[
    \vp_{(a,0)}(F) \cap \Big( \bigcup_{i\in\D\setminus\{(a,0)\}}\vp_i(F) \Big) = \vp_{(a,0)}(F) \cap \vp_{(a+1,1)}(F)
\]
is a singleton. This means that $F$ is fragile and we again arrive at a contradiction.

\textbf{Case 3}. There exists $i_*\in \D$ satisfying the conditions in Lemma~\ref{lem:pra2} with $|I(i_*)|=3$. Rotating and reflecting if necessary, Figures~\ref{fig:pra_case2+3}(C) illustrates all possibilities. In this case, we again have
\[
    \{(0,0),(N-1,0),(0,N-1),(N-1,N-1)\} \subset\D.
\]
and will obtain a contradiction as in Subcase 2.1.

\section{Possible numbers of cut points}
\subsection{Constructions}
It is also interesting to consider the possible number of cut points of any connected GSC $F$. We have the following possibilities.
\begin{enumerate}
    \item $F$ has no cut points (e.g., the standard Sierpi\'nski carpet).
    \item $F$ has exactly one cut point. For example, the GSC in Figure~\ref{fig:exa_goodcp} satisfies this requirement, i.e., $(1/2,0)$ is the only cut point of $F$. Please see Lemma~4.1 in \cite{RW17} for a detailed proof.
    \item $F$ has more than one but still finitely many cut points. Please see Example~\ref{exa:exactlyncutpts}.
    \item $F$ has countably many cut points. For example, take $N=3$ and \[ \D=\{(0,1),(1,0),(1,1),(1,2),(2,0),(2,1),(2,2)\}. \] Please see Figure~\ref{fig:numexa}. In this case, the collection of cut points of $F$ is a subset of
    \[
        \{(1/2,1/3)\} \cup \{\vp_{\i}((1/2,1/3)): \i\in\D^*\},
    \]
    and it is easy to see that $\{\vp_{(1,0)}^n((1/2,1/3)): n\geq 1\}$ are cut points of $F$. One can modify the argument in Example~\ref{exa:exactlyncutpts} (by considering the union of all horizontal and vertical line segments instead) and get a rigorous proof.
    \item $F$ has uncountably many cut points. This happens trivially when $F$ is a line segment. For example, take $N=3$ and $\D=\{(0,0),(1,0),(2,0)\}$ (so $F=F(N,\D)$ is just the interval $[0,1]\times\{0\}$).
\end{enumerate}

\begin{figure}[htbp]
        \centering
        \includegraphics[width=3.7cm]{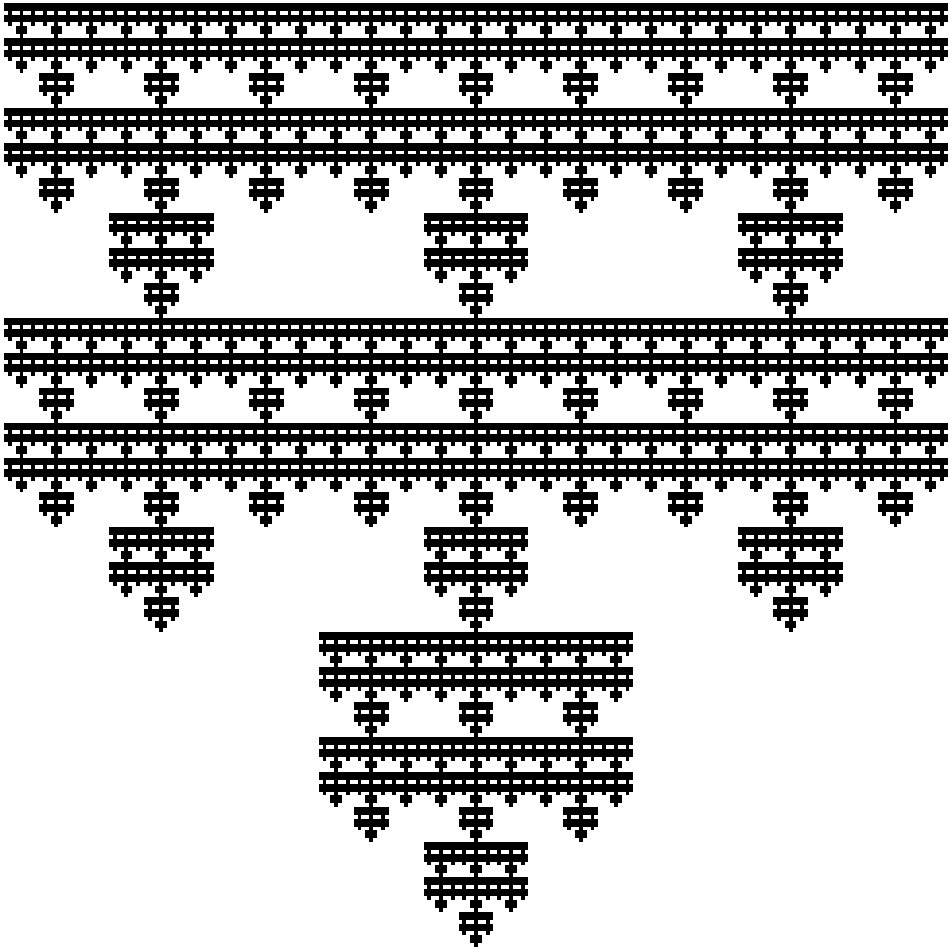}
    \caption{A GSC with countably many cut points}
    \label{fig:numexa}
\end{figure}

The following example establishes Theorem~\ref{thm:number}.


\begin{example}\label{exa:exactlyncutpts}
    Given any positive integer $m\geq 3$, we can construct a connected GSC with exactly $(2m-3)$ cut points. For example, let
    \[
        \Lambda_k = \{(k,i):0\leq i\leq m-1\} \cup \{(k+1,i): m\leq i\leq 2m-1\}, \quad 0\leq k\leq 2m-2,
    \]
    and set
    \[
        \D_m = \{(0,i):m\leq i\leq 2m-1\} \cup \{(2m-1,i):0\leq i\leq m-1\} \cup \bigcup_{k=0}^{m-1} \Lambda_{2k}.
    \]
    We claim that the GSC $F=F(2m,\D_m)$ has exactly $(2m-3)$ cut points. Please see Figure~\ref{fig:3cps}(A) for the case when $m=3$.

    Here is a sketch of proof of the above claim, which is a modification of the proof of~\cite[Lemma 4.1]{RW17}. Note that there are infinitely many line segments of slope $m$ in $F$. Denote $A$ to be the union of all line segments in $F$ of slope $m$ and $\infty$ (i.e., vertical ones). It is not hard to see that $A$ is a connected subset of $F$, and $A\setminus\{x\}$ remains connected unless
    \[
        x \in \Big\{ \Big( \frac{i}{2m}, \frac{1}{2} \Big): 2\leq i\leq 2m-2 \Big\} =: C.
    \]
    For any $x\notin C$ and any $y\in F\setminus\{x\}$, there is some $\i\in\D_m^*$ such that $y\in\vp_{\i}(F)$ but $x\notin\vp_{\i}(F)$. Since $\vp_{\i}(F)$ contains infinitely many line segments of slope $m$ and $\infty$, $\vp_{\i}(F)$ and $A\setminus\{x\}$ (which are both connected) belong to the same connected component of $F\setminus\{x\}$. In particular, $y$ and $A\setminus\{x\}$ belong to the same connected component of $F\setminus\{x\}$. It then follows from the arbitrariness of the choice of $y$ that $F\setminus\{x\}$ is connected. On the other hand, it is easy to see that every point in $C$ is indeed a cut point of $F$. Thus $F$ contains exactly $|C|=2m-3$ cut points.

    The above construction settles the existence of GSCs containing an odd number ($\geq 3$) of cut points. For even numbers, just set
    \[
        \D'_m = \D_m\cup\{(2m-2,i): m\leq i\leq 2m-1\}.
    \]
    Similarly as above, it is not hard to see that the new GSC $F'=F'(2m,\D')$ contains exactly $(2m-4)$ cut points
    \[
        \Big( \frac{2}{2m}, \frac{1}{2} \Big), \Big( \frac{3}{2m}, \frac{1}{2} \Big), \ldots, \Big( \frac{2m-3}{2m}, \frac{1}{2} \Big).
    \]
    Please see Figure~\ref{fig:3cps}(B) for the case when $m=3$.
    \begin{figure}[htbp]
        \centering
        \subfloat[$3$ cut points]
        {
            \begin{minipage}[t]{130pt}
                \centering
                \includegraphics[width=4cm]{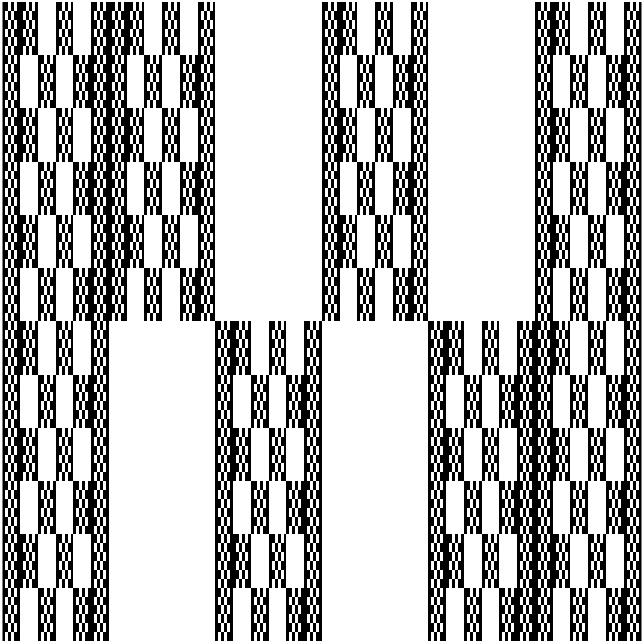}
            \end{minipage}
        }
        \subfloat[$2$ cut points]
        {
            \begin{minipage}[t]{130pt}
                \centering
                \includegraphics[width=4cm]{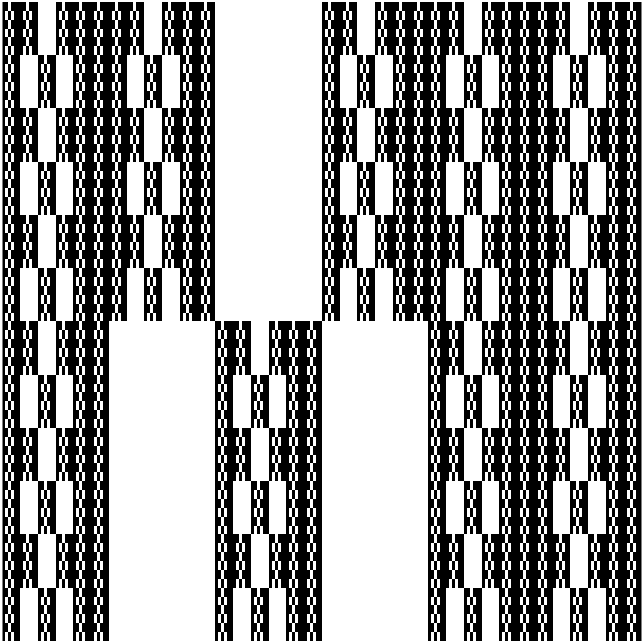}
            \end{minipage}
        }
        \caption{GSCs with exactly $3$ and $2$ cut points}
        \label{fig:3cps}
    \end{figure}
\end{example}

\subsection{Non-fragile cases}
In this subsection, we prove Theorem~\ref{thm:2orinfinity}.
To this end, we need a series of observations as follows.

\begin{lemma}\label{lem:finitecomandnont}
    For any $x\in F$, $F\setminus\{x\}$ has at most eight connected components. Moreover, none of these components is trivial.
\end{lemma}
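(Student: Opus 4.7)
The plan is to prove the two assertions separately.

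The non-triviality is quick. By Hata's theorem (which applies since $F$ is a connected self-similar set), $F$ is locally connected, and therefore so is the open subset $F\setminus\{x\}$. Hence every connected component of $F\setminus\{x\}$ is open in $F$. Since $F$ is connected with more than one point, it has no isolated points, so every non-empty open subset of $F$ is infinite. This rules out singleton components.

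For the bound of $8$, I would fix $x\in F$ and, for each $n\geq 1$, set
\[
    V_n := \{\i\in\D^n : x\in\vp_{\i}(F)\}, \quad W_n := \{\j\in\D^n\setminus V_n : \vp_{\j}(F)\cap\vp_{\i}(F)\neq\varnothing \text{ for some }\i\in V_n\}.
\]
Since at most four axis-aligned unit squares of the $1/N^n$-grid can meet at a single point, $|V_n|\leq 4$. I would first verify that every connected component of $F\setminus\{x\}$ accumulates at $x$: otherwise such a component would be both open and closed in $F$, contradicting connectedness.

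The heart of the proof is a direction-assignment. For each connected component $G$ of $F\setminus\{x\}$ and each sufficiently large $n$, I would choose a point $y\in G$ close enough to $x$ that $y\in\vp_{\eta}(F)$ for some $\eta\in V_n\cup W_n$; because cells in $V_n$ shrink to $\{x\}$ as $n\to\infty$, one may in fact arrange $\eta\in W_n$. The cell $\vp_{\eta}(F)$ sits in one of the $8$ grid positions of $\P = \{\uparrow,\downarrow,\leftarrow,\rightarrow,\swarrow,\searrow,\nwarrow,\nearrow\}$ around $x$; here I would group together neighbor cells attached to different members of $V_n$ that lie in the same grid direction from $x$, since any two such cells share an edge whose closest point to $x$ is bounded away from $x$, so they automatically belong to the same component of $F\setminus\{x\}$. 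Distinct components must receive distinct directions (otherwise two cells of $W_n$ in the same direction would merge them), so the direction assignment is injective and yields the bound $|\P|=8$.

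The main obstacle is the $|V_n|=4$ configuration, where the $2\times 2$ block in $V_n$ has up to $12$ neighbor cells collectively. Reducing these to the $8$ directional classes, and confirming that cells in the same class do merge in $F\setminus\{x\}$, requires a configuration-by-configuration analysis, likely exploiting Lemma~\ref{lem:fourvertex} to force vertex coincidences in $F$ and Lemma~\ref{lem:casedisfact} to pin down isolated intersections. A parallel but milder analysis handles $|V_n|\in\{2,3\}$.
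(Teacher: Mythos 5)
Your treatment of non-triviality is correct, though it takes a different route from the paper's: you use Hata's theorem to make $F\setminus\{x\}$ locally connected, so that its components are open in $F$ and hence infinite, whereas the paper simply picks a small cell $\vp_{\i}(F)$ containing the putative isolated point $z$ but not $x$ and notes that this connected cell must lie inside the component of $z$. Either argument is fine.

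The bound of eight, however, is not established by your sketch; the entire difficulty of the statement is concentrated in the step you defer. When $|V_n|=4$, the block of cells containing $x$ has up to twelve neighbouring cells, and you must (i) assign each of them one of the eight directions of $\P$ unambiguously (relative to $x$ this is genuinely ambiguous: the two cells sitting directly above a $2\times 2$ block are respectively up-and-left and up-and-right of $x$), and (ii) prove that two neighbour cells receiving the same direction lie in the same component of $F\setminus\{x\}$. Your justification of (ii) --- that two such cells ``share an edge whose closest point to $x$ is bounded away from $x$, so they automatically belong to the same component'' --- is false as stated: adjacency of the squares $\vp_{\i}([0,1]^2)$ and $\vp_{\j}([0,1]^2)$ does not imply $\vp_{\i}(F)\cap\vp_{\j}(F)\neq\varnothing$, so there need be no direct merging at all, and any indirect merging would have to pass through the cells of $V_n$, i.e.\ through an arbitrarily small neighbourhood of $x$, which is precisely what has to be controlled. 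Deferring this to an unspecified configuration-by-configuration analysis leaves the main content of the lemma unproved. The paper sidesteps the twelve-neighbour problem entirely: fix one coding $\bi$ of $x$ and set $E_n=\bigcup_{\j\in\D^n\setminus\{\bi|_n\}}\vp_{\j}(F)$; nine points taken from nine distinct components of $F\setminus\{x\}$ all lie in $E_{n_0}$ for some large $n_0$, hence in nine distinct components of $E_{n_0}$, so by Lemma~\ref{lem:equivalent} the graph $\Gamma_{n_0}-\{\bi|_{n_0}\}$ has nine components, contradicting Lemma~\ref{lem:graphcon} since the single deleted vertex $\bi|_{n_0}$ has at most eight neighbours. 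Because only one cell is deleted, only its eight surrounding positions ever enter the count; the remaining cells of your $V_{n_0}$ stay inside $E_{n_0}$ and are available to connect their neighbours, so no case analysis is needed.
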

\begin{proof}
    Let $\bi=i_1i_2\cdots\in\D^\infty$ be an infinite word such that $\{x\}=\bigcap_{n=1}^\infty \vp_{\bi|_n}(F)$. Writing $E_n=\bigcup_{\j\in\D^n\setminus\{\bi|_n\}}\vp_{\j}(F)$, it is easy to see that $\{E_n\}$ is an increasing sequence and $F\setminus\{x\}=\bigcup_{n=1}^\infty E_n$. Suppose on the contrary that we can find nine distinct connected components $\C_1,\ldots,\C_9$ of $F\setminus\{x\}$. Select $z_i\in\C_i$. Note that $\{z_i\}_{i=1}^9\subset E_{n_0}$ for some large $n_0$. Since $E_{n_0}\subset F\setminus\{x\}$, $z_i$ and $z_j$ must belong to different components of $E_{n_0}$ whenever $i\neq j$. So $E_{n_0}$ contains at least nine components. Equivalently (by Lemma~\ref{lem:equivalent}), $\Gamma_{n_0}-\{\bi|_{n_0}\}$ has at least nine components. Since $\bi|_{n_0}$ has at most eight neighbors in $\Gamma_{n_0}$, this contradicts Lemma~\ref{lem:graphcon}.

    For the second statement, suppose that $\{z\}$ is a trivial connected component of $F\setminus\{x\}$. Since $z\neq x$, we can find a word $\i\in\D^n$ for some large $n$ such that $z\in\vp_{\i}(F)$ but $x\notin\vp_{\i}(F)$. The connectedness of $F$ implies that $\vp_{\i}(F)$ is also connected. So $z$ and $\vp_{\i}(F)$ belong to the same connected component of $F\setminus\{x\}$. This is a contradiction.
\end{proof}

\begin{lemma}\label{lem:pathcomponent}
    For any $x\in F$, the space $F\setminus\{x\}$ is locally path connected. In particular, the connected components and the path connected components of $F\setminus\{x\}$ are the same.
\end{lemma}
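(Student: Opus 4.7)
The plan is to deduce both assertions from two classical point-set topology facts: every Peano continuum is locally path connected, and in any locally path connected space the connected components coincide with the path components.

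First I would verify that the ambient space $F$ itself is locally path connected. By the result of Hata~\cite{Hat85} already cited in the introduction, the connected self-similar set $F$ is locally connected; being also compact, connected, and metric, it is a Peano continuum. It is then a classical fact---provable via the Hahn--Mazurkiewicz theorem realizing $F$ as a continuous image of $[0,1]$, or more elementarily by chaining small connected open neighborhoods into an arc between nearby points---that every Peano continuum is locally (arcwise, hence) path connected.

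Next I would descend from $F$ to $F\setminus\{x\}$. Since $\{x\}$ is closed, $F\setminus\{x\}$ is open in $F$, and local path connectedness passes trivially to open subspaces: given $y\in F\setminus\{x\}$ and an open neighborhood $U$ of $y$ in $F\setminus\{x\}$, one first shrinks $U$ to an open subset $U'$ of $F$ with $y\in U'$ and $x\notin U'$ (using $y\neq x$), and then invokes local path connectedness of $F$ at $y$ inside $U'$ to obtain a path-connected neighborhood. The equality of connected and path components is then immediate: in any locally path connected space each path component is open, its complement is the union of the remaining (open) path components and hence open as well, so each path component is clopen and must coincide with the corresponding connected component.

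I do not anticipate any real difficulty; the only subtlety worth spelling out carefully is why local connectedness upgrades to local \emph{path} connectedness for $F$, which is precisely the content of the Peano continuum theorem and could be either cited from a standard reference or sketched in a sentence.
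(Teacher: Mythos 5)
Your proof is correct, but it takes a genuinely different route from the paper's. You go through general continuum theory: Hata's theorem makes $F$ locally connected, hence a Peano continuum, hence locally (arcwise) path connected by the classical theorem; local path connectedness then passes to the open subspace $F\setminus\{x\}$, and the identification of components with path components follows since path components are open, hence clopen. The paper instead argues directly from the cell structure of the GSC: for any open $U\subset F\setminus\{x\}$, a path component $\C$ of $U$, and $z\in\C$, one chooses $n$ so large that every level-$n$ cell containing $z$ lies in $U$; each such cell is path connected by Kigami's theorem that connected self-similar sets are path connected, so it lies in $\C$, and a ball of radius less than the distance from $z$ to the union of the level-$n$ cells not containing $z$ is then contained in $\C$, proving $\C$ is open. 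Your argument is shorter and more general (it works for any Peano continuum minus a point), but it outsources the hard step to the nontrivial theorem that Peano continua are locally arcwise connected; the paper's argument stays elementary and self-contained given the IFS structure, needing only the cited fact that connected self-similar sets are path connected. Either way the statement is established.
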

\begin{proof}
    We claim that for every open set $U$ of $F\setminus\{x\}$, each path connected component of $U$ is open in $F\setminus\{x\}$. As a result, for any $y\in U$, the path connected component of $U$ containing $y$ is a neighborhood of $y$. Since $U$ is arbitrary, $F\setminus\{x\}$ is locally path connected.

    For the claim, let $\C$ be a path component of $U$ and let $z\in \C$. Since $U$ is open and $z\neq x$, we can choose $n$ so large that for every $\i\in\D^n$ with $z\in\vp_{\i}(F)$, $\vp_{\i}(F)\subset U \subset F\setminus\{x\}$. Recall that a connected self-similar set is always path connected (please refer to~\cite[Theorem 1.6.2]{Kig01}). So for every such $\i$, $\vp_{\i}(F)$ is path connected and hence $\vp_{\i}(F)\subset\C$.

    Moreover, since $\bigcup_{\j\in\D^n: z\notin\vp_{\j}(F)}\vp_{\j}(F)$ is a compact subset of $\R^2$, the distance between it and the singleton $\{z\}$ is a positive real number, say $\delta$. Then for $0<r<\delta$,
    \[
        \{y\in F\setminus\{x\}: |y-z|<r\} \subset \bigcup_{\j\in\D^n:z\in\vp_{\j}(F)} \vp_{\j}(F)\subset\C.
    \]
    This indicates the openness of $\C$.
\end{proof}

Motivated by~\cite{Xiao21}, we introduce the following definition.

\begin{definition}
    Let $F$ be a connected GSC. Suppose that $x\in F$ and $\C$ is a connected component of $F\setminus\{x\}$.
    \begin{enumerate}
        \item The component $\C$ is called \emph{vertical} if $\C$ meets both $[0,1]\times\{0\}$ and $[0,1]\times\{1\}$;
        \item The component $\C$ is called \emph{horizontal} if $\C$ meets both $\{0\}\times[0,1]$ and $\{1\}\times[0,1]$;
        \item The component $\C$ is called \emph{corner-like} if it is neither vertical nor horizontal.
    \end{enumerate}
\end{definition}

We also call any path $\ell\subset[0,1]^2$ \emph{vertical} (resp. \emph{horizontal}) if $\ell$ meets both $[0,1]\times\{0\}$ and $[0,1]\times\{1\}$ (resp. $\{0\}\times[0,1]$ and $\{1\}\times[0,1]$). As a consequence of Lemma~\ref{lem:pathcomponent}, every vertical (resp. horizontal) component of $F\setminus\{x\}$, where $x\in F$, contains a vertical (resp. horizontal) path. Recall that $C_F$ denotes the set of cut points of $F$.

\begin{proposition}\label{prop:cornermeansinf}
    If there is some $x\in C_F$ and some connected component of $F\setminus\{x\}$ that is corner-like, then $\#C_F=+\infty$.
\end{proposition}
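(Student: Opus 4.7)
The plan is to build an infinite sequence of cut points by self-similar descent into the corner-like component $\mathcal{C}$. By Lemma~\ref{lem:pathcomponent}, $\mathcal{C}$ is open in $F$, so there exists $\tau\in\D^*$ such that $\vp_{\tau}(F)\subset\mathcal{C}$: pick any $y\in\mathcal{C}$, take a cell $\vp_{\tau}(F)$ containing $y$ with $|\tau|$ large enough that the cell lies inside the open set $\mathcal{C}$. The natural candidate for a new cut point is $\vp_{\tau}(x)\in\vp_{\tau}(F)\subset\mathcal{C}$, which is distinct from $x$ since $x\notin\mathcal{C}$. Moreover $\vp_{\tau}(F)\setminus\{\vp_{\tau}(x)\}$ is a scaled homeomorphic copy of $F\setminus\{x\}$, so it decomposes into connected components including $\vp_{\tau}(\mathcal{C})$, which is corner-like within $\vp_{\tau}(F)$ in the sense that it misses the scaled copies of the two perpendicular sides of $[0,1]^2$ that $\mathcal{C}$ misses.

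The critical step is to promote $\vp_{\tau}(\mathcal{C})$ from a component of $\vp_{\tau}(F)\setminus\{\vp_{\tau}(x)\}$ to a component of the full $F\setminus\{\vp_{\tau}(x)\}$. For this it suffices to show that $\vp_{\tau}(\mathcal{C})$ is disjoint from the contact set $B:=\vp_{\tau}(F)\cap(F\setminus\vp_{\tau}(F))$, since $B$ is exactly where external paths from $F\setminus\vp_{\tau}(F)$ could reach $\vp_{\tau}(F)$; absent such contact, $\vp_{\tau}(\mathcal{C})$ stays isolated and $\vp_{\tau}(x)$ is a cut point of $F$. To secure $\vp_{\tau}(\mathcal{C})\cap B=\varnothing$, I would choose $\tau$ so that the neighboring level-$|\tau|$ cells of $\vp_{\tau}(F)$ (described by $\vp_{\tau(t)}$ in Definition~\ref{de:position}) lie exclusively on the sides of $\vp_{\tau}([0,1]^2)$ corresponding to the sides missed by $\mathcal{C}$. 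Concretely, if $\mathcal{C}$ misses (say) the top and left of $[0,1]^2$, I would steer $\tau$ toward a ``bottom-right'' region of $\mathcal{C}$ via corner-adapted digits in $\D$, so that no level-$|\tau|$ cell of $F$ sits below or to the right of $\vp_{\tau}([0,1]^2)$. The non-fragility of $F$, together with Lemma~\ref{lem:evenfragile}, would rule out the remaining degenerate one-point-contact configurations.

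Once $\vp_{\tau}(x)$ is established as a cut point with an associated corner-like component $\vp_{\tau}(\mathcal{C})\subset\vp_{\tau}(F)$, the construction is iterated: apply the same reasoning to $(\vp_{\tau}(x),\vp_{\tau}(\mathcal{C}))$ to find $\tau'\in\D^*$ with $\vp_{\tau\tau'}(F)\subset\vp_{\tau}(\mathcal{C})$ and $\vp_{\tau\tau'}(x)\in C_F$, and so on. The resulting sequence $\{\vp_{\tau_k\cdots\tau_1}(x)\}_{k\geq 1}$ lies in strictly nested cells of shrinking diameter, hence the cut points are pairwise distinct, giving $\#C_F=+\infty$.

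The main obstacle is the geometric claim in the second paragraph: producing $\tau$ with $\vp_{\tau}(\mathcal{C})\cap B=\varnothing$ in full generality. The delicate configurations are those in which $\mathcal{C}$ meets both of the non-missed sides of $[0,1]^2$, or in which $\D$ lacks convenient corner-digits that would allow $\vp_{\tau}([0,1]^2)$ to sit flush against an edge of $[0,1]^2$. A case analysis combining Lemma~\ref{lem:position}, Lemma~\ref{lem:evenfragile}, and the non-fragility hypothesis is expected to settle these remaining cases, possibly by passing to a yet deeper refinement $\tau\sigma$ when a direct choice of $\tau$ fails.
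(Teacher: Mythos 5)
Your overall strategy --- push the corner-like component into an extreme corner by a similarity so that it detaches from the rest of $F$, obtain a new cut point, and iterate --- is the right one, but the decisive step is exactly the one you leave open. You explicitly defer the existence of a $\tau$ with $\vp_{\tau}(\C)$ disjoint from the contact set to an unspecified ``case analysis,'' and your formulation of that step is strictly harder than it needs to be because you impose the extra requirement $\vp_{\tau}(F)\subset\C$. That requirement buys you nothing (it is not what makes $\vp_\tau(x)$ a cut point, and distinctness of the iterates can be had from contractivity alone) and it can genuinely conflict with the corner condition: the level-$|\tau|$ cells whose neighbors lie only on the two prescribed sides sit at an extreme corner of $[0,1]^2$, and there is no reason such a cell should be contained in $\C$ (it may contain $x$, or lie in a different component). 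So as written the argument does not close.

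The paper's proof resolves this with no case analysis and no appeal to non-fragility. Orient so that $\C$ misses $[0,1]\times\{1\}$ and $\{1\}\times[0,1]$, and take the single digit $i_*=(a_*,b_*)$ with $a_*=\min\{a:(a,b)\in\D\}$ and $b_*=\min\{b:(a_*,b)\in\D\}$, which always exists. Then $i_*(\nwarrow),i_*(\leftarrow),i_*(\swarrow),i_*(\downarrow)\notin\D$, so every other level-$1$ cell can meet $\vp_{i_*}(F)$ only along $\vp_{i_*}(([0,1]\times\{1\})\cup(\{1\}\times[0,1]))$, which $\vp_{i_*}(\C)$ misses by definition of corner-like. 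Hence $\vp_{i_*}(\C)$ is already a connected component of $F\setminus\{\vp_{i_*}(x)\}$ --- whether or not $\vp_{i_*}(F)$ has anything to do with where $\C$ sits. For the iteration, Lemma~\ref{lem:iiawayfromj} gives $\dist(\vp_{i_*}^2(F),\vp_i(F))>0$ for $i\neq i_*$, so all further images $\vp_{i_*}^n(\C)$ are automatically components of $F\setminus\{\vp_{i_*}^{n}(x)\}$, and they are pairwise distinct because $\C$ is nontrivial (Lemma~\ref{lem:finitecomandnont}) and $\vp_{i_*}$ is strictly contractive. If you want to salvage your write-up, replace your search for $\tau$ with this explicit extreme digit and drop the condition $\vp_\tau(F)\subset\C$; otherwise the proof is incomplete at its central point.
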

\begin{proof}
    Denote the corner-like connected component in the assumption by $\C$. So $\C$ is neither vertical nor horizontal. Without loss of generality, assume that $\C\cap([0,1]\times\{1\})=\varnothing$ and $\C\cap(\{1\}\times[0,1])=\varnothing$. Let $a_*=\min\{a: (a,b)\in\D\}$, $b_*=\min\{b:(a_*,b)\in\D\}$ and write $i_*=(a_*,b_*)\in\D$. Recalling Definition~\ref{de:position}, we have $i_*(\nwarrow),i_*(\leftarrow), i_*(\swarrow), i_*(\downarrow)\notin\D$. Writing $y=\vp_{i_*}(x)$, it is easy to see that
    \[
       \vp_{i_*}(\C)\cap \Big( (\vp_{i_*}(F)\setminus \{y\}) \setminus \{\vp_{i_*}(\C)\} \Big)=\varnothing.
    \]
    Thus 
    \begin{align*}
        \vp_{i_*}(\C) \cap \Big( (F\setminus\{y\}) \setminus\vp_{i_*}(\C) \Big) &\subset \vp_{i_*}(\C) \cap \Big( \bigcup_{i\in\D\setminus\{i_*\}}\vp_i(F) \Big) \\
        &\subset \vp_{i_*}(\C) \cap \Big( \bigcup_{t\in\P}\vp_{i_*(t)}(F) \Big) \\
        &\subset \vp_{i_*}(\C) \cap \Big( \bigcup_{t\in\{\uparrow,\nearrow,\rightarrow,\searrow\}}\vp_{i_*(t)}(F) \Big) \\
        &= \vp_{i_*}(\C) \cap (\vp_{i_*}([0,1]\times\{1\}) \cup\vp_{i_*}(\{1\}\times[0,1]) ) = \varnothing,
    \end{align*}
    implying that $\vp_{i_*}(\C)$ is a connected component of $F\setminus\{y\}$. In particular, $y\in C_F$. Since $\C$ is nontrivial (recall Lemma~\ref{lem:finitecomandnont}) and $\vp_{i_*}$ is strictly contractive, $\vp_{i_*}(\C)\neq\C$.

    Further, it follows from the self-similarity of $F$ that $\vp_{i_*}^2(\C)$ is a connected component of $\vp_{i_*}(F\setminus\{y\})=\vp_{i_*}(F)\setminus\{\vp_{i_*}(y)\}$. The proof of Lemma~\ref{lem:iiawayfromj} actually shows that
    \[
        \dist(\vp_{i_*}^2(\C), \vp_{i}(F)) \geq \dist(\vp_{i_*}^2(F), \vp_{i}(F)) >0, \quad \forall i\in\D\setminus\{i_*\}.
    \]
    Since $F=\vp_{i_*}(F)\cup\bigcup_{i\in\D\setminus\{i_*\}}\vp_i(F)$, $\vp_{i_*}^2(\C)$ should be a connected component of $F\setminus\{\vp_{i_*}(y)\}$, and it is not equal to $\C$ and $\vp_{i_*}(\C)$ for the reason as before. By an easy induction process, we can show that $\vp_{i_*}^n(\C)$ is a connected component of $F\setminus\{\vp_{i_*}^{n-1}(y)\}$ for all $n\geq 1$, and $\vp_{i_*}^n(\C)\neq\vp_{i_*}^m(\C)$ whenever $n\neq m$. Combining with Lemma~\ref{lem:finitecomandnont}, $\#C_F$ must be infinity.
\end{proof}

\begin{corollary}\label{cor:allverorhor}
    If $0<\#C_F<\infty$, then for every $x\in C_F$, the connected components of $F\setminus\{x\}$ are either all vertical or all horizontal.
\end{corollary}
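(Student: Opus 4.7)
The plan is to derive this from Proposition~\ref{prop:cornermeansinf} together with a classical topological fact about paths in the unit square. Fix any $x\in C_F$. By the assumption $0<\#C_F<\infty$, Proposition~\ref{prop:cornermeansinf} forbids $F\setminus\{x\}$ from having any corner-like connected component, so every component is either vertical or horizontal by the trichotomy in the definition. It remains to rule out the mixed case where some component is vertical and some other component is horizontal.

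Suppose for contradiction that there exist distinct components $\C_v,\C_h$ of $F\setminus\{x\}$ such that $\C_v$ is vertical and $\C_h$ is horizontal. Invoking Lemma~\ref{lem:pathcomponent}, both $\C_v$ and $\C_h$ are path connected. So there is a path $\ell_v\subset\C_v$ joining a point of $[0,1]\times\{0\}$ to a point of $[0,1]\times\{1\}$, and a path $\ell_h\subset\C_h$ joining a point of $\{0\}\times[0,1]$ to a point of $\{1\}\times[0,1]$. Both paths are contained in $[0,1]^2$ and neither passes through $x$. Since $\C_v\cap\C_h=\varnothing$, we have $\ell_v\cap\ell_h=\varnothing$.

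The crux is now the classical observation that a vertical arc and a horizontal arc in the closed unit square must meet. Concretely, the connected set $\ell_v$ contains points of both $[0,1]\times\{0\}$ and $[0,1]\times\{1\}$, so $[0,1]^2\setminus\ell_v$ cannot contain a connected set joining $\{0\}\times[0,1]$ to $\{1\}\times[0,1]$; this is a standard consequence of (for example) the planar Jordan-type separation theorem applied to the arc $\ell_v$ closed up by prolongations beyond $[0,1]^2$. Hence $\ell_h$ must meet $\ell_v$, contradicting $\ell_v\cap\ell_h=\varnothing$.

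The main obstacle—really the only non-trivial ingredient—is invoking the planar topological fact that crossing arcs intersect. Everything else is a clean application of Proposition~\ref{prop:cornermeansinf} and Lemma~\ref{lem:pathcomponent}. The argument applies verbatim to every $x\in C_F$, establishing that at each cut point either all components reach both horizontal edges or all components reach both vertical edges, as claimed.
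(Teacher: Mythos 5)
Your proposal is correct and follows essentially the same route as the paper: rule out corner-like components via Proposition~\ref{prop:cornermeansinf}, then use Lemma~\ref{lem:pathcomponent} to extract a vertical path and a horizontal path in two putatively distinct components and derive a contradiction from the fact that such paths in $[0,1]^2$ must intersect. The only difference is that you spell out the Jordan-type separation argument behind that crossing fact, which the paper simply asserts.
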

\begin{proof}
    Since $0<\#C_F<\infty$, the above proposition tells us that every connected component of $F\setminus\{x\}$ is either vertical or horizontal. Suppose there are two components $\C,\C'$ of $F\setminus\{x\}$ such that $\C$ is vertical but $\C'$ is horizontal. By Lemma~\ref{lem:pathcomponent}, $\C$ and $\C'$ are both path connected. So there is a vertical path in $\C$ and a horizontal path in $\C'$. But these two paths must intersect with each other, implying that $\C\cap\C'\neq\varnothing$ and hence $\C=\C'$. This is a contradiction.
\end{proof}

As a result, for any $x\in C_F$ and any pair of vertical connected components $\C,\C'$ of $F\setminus\{x\}$, $\C$ is either to the left or to the right of $\C'$. More precisely, we have either
\[
    \sup\{x: (x,y)\in\C\} \leq \inf\{x: (x,y)\in\C'\} \quad \text{ for all } y\in\{0,1\}
\]
or vice versa.

\begin{proposition}\label{prop:twovertical}
    If $0<\# C_F<\infty$, then for every $x\in C_F$, $F\setminus\{x\}$ contains exactly two connected components that are both vertical or both horizontal.
\end{proposition}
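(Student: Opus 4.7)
The plan is to proceed by contradiction. By Corollary~\ref{cor:allverorhor}, all connected components of $F\setminus\{x\}$ are either all vertical or all horizontal; without loss of generality assume the vertical case. List them $\C_1,\ldots,\C_k$ in left-to-right order using the ordering from the remark following Corollary~\ref{cor:allverorhor}. Since $x$ is a cut point, $k\geq 2$; I aim to show that $k\geq 3$ is impossible. My overall strategy is to exhibit a corner-like connected component of $F\setminus\{y\}$ for some $y\in C_F$ and then invoke Proposition~\ref{prop:cornermeansinf} to force $\#C_F=+\infty$, contradicting the finiteness hypothesis.

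The first step analyzes the position of an intermediate vertical component $\C_2$ (one with components on both sides of it). The key geometric claim is that $\C_2$ cannot meet the left edge $\{0\}\times[0,1]$ nor the right edge $\{1\}\times[0,1]$ of $[0,1]^2$. The easier case is when $(0,y_0)\in\C_2$ with $y_0\in\{0,1\}$: writing $\alpha_i(y)=\inf\{u:(u,y)\in\C_i\}$ and $\beta_i(y)=\sup\{u:(u,y)\in\C_i\}$ as in the remark preceding the proposition, the ordering $\beta_1(y_0)\leq\alpha_2(y_0)=0$ together with $(0,y_0)\notin\C_1$ forces $\C_1\cap([0,1]\times\{y_0\})=\varnothing$, contradicting the verticality of $\C_1$. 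For $y_0\in(0,1)$, I would use the path connectedness from Lemma~\ref{lem:pathcomponent} to link $(0,y_0)$ with the bottom and top contacts of $\C_2$, and combine this with the grid structure near the left edge of $[0,1]^2$ via Lemma~\ref{lem:fourvertex}, together with non-fragility (Lemma~\ref{lem:evenfragile}), to extract a fragile decomposition of $F$, again a contradiction. The right-edge analysis is symmetric.

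Having pinned $\C_2$ strictly away from the left and right edges of $[0,1]^2$, I apply the similarity $\vp_{i_*}$ (possibly iterated $\vp_{i_*}^m$ for some $m\geq 1$) with $i_*=(a_*,b_*)$ the lex-smallest digit of $\D$ as in the proof of Proposition~\ref{prop:cornermeansinf}. Because $\vp_{i_*}^m([0,1]^2)$ sits in the lower-left part of $[0,1]^2$ (when $a_*<N-1$ and $b_*<N-1$; other corner cases are handled by a symmetric choice of $i_*$), the image $\vp_{i_*}^m(\C_2)$ misses the top and right edges of $[0,1]^2$. Combined with the choice of $i_*$ (which kills the $\leftarrow,\swarrow,\nwarrow,\downarrow$ neighbor directions) and the horizontal strictness of $\C_2$ just established (which kills the $\rightarrow,\searrow,\nearrow$ adjacencies through the right side of the cell), I would argue that $\vp_{i_*}^m(\C_2)$ is disconnected from the rest of $F$, modulo the potential $\uparrow$-adjacency when $(a_*,b_*+1)\in\D$. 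This last adjacency is handled either by a self-similar analysis at one deeper scale, or by replacing $i_*$ with a cleverer corner digit of $\D$ (for instance a top-right extremal digit) so that the resulting cell has no further ambient neighbors in the direction in which $\C_2$ extends. The outcome is that $\vp_{i_*}^m(\C_2)$ is a corner-like connected component of $F\setminus\{y\}$ for $y=\vp_{i_*}^m(x)$, at which point Proposition~\ref{prop:cornermeansinf} delivers the contradiction.

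The main obstacle is the first step, namely ruling out that $\C_2$ touches either side edge of $[0,1]^2$ at an interior height $y_0\in(0,1)$: the ordering inequalities only control the top and bottom rows, so a more delicate geometric/combinatorial argument combining path connectedness with the cell-wise structure of $F$, in the spirit of the case analysis in Proposition~\ref{prop:stillgood}, is needed. Carefully leveraging the non-fragility hypothesis to eliminate each configuration is where the bulk of the technical work should lie.
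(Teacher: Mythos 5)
Your reduction to the all-vertical case via Corollary~\ref{cor:allverorhor} matches the paper, but after that your route diverges and both of its main steps are left as acknowledged obstacles rather than proofs, so there is a genuine gap. The more serious problem is the second step. Proposition~\ref{prop:cornermeansinf} produces new cut points from a component that is \emph{already} corner-like, precisely because such a component avoids the edges of $[0,1]^2$ in the directions where the corner cell $\vp_{i_*}([0,1]^2)$ has neighbors. A middle component $\C_2$ is by hypothesis \emph{vertical}: it meets both $[0,1]\times\{0\}$ and $[0,1]\times\{1\}$, so $\vp_{i_*}^m(\C_2)$ necessarily meets the top edge of $\vp_{i_*}^m([0,1]^2)$, and whenever the cell directly above is occupied (which you cannot rule out for any choice of extremal digit --- e.g.\ $\D$ may occupy an entire column), $\vp_{i_*}^m(\C_2)$ can be joined to the rest of $F\setminus\{\vp_{i_*}^m(x)\}$ through that neighbor and is then not a connected component at all. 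Your proposed fixes (``one deeper scale'', ``a cleverer corner digit'') do not address this, because the obstruction recurs at every scale and at every corner: verticality is exactly the property that defeats the corner-pushing trick. The first step (ruling out that $\C_2$ meets $\{0\}\times[0,1]$ or $\{1\}\times[0,1]$ at interior heights) is also only sketched, and even if completed it would not remove the top/bottom contact just described.

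The paper's proof uses a different and much shorter mechanism that your proposal misses entirely: every connected component $\C$ of $F\setminus\{x\}$ comes arbitrarily close to $x$. Writing $E_n=\bigcup_{\j\in\D^n\setminus\{\bi|_n\}}\vp_{\j}(F)$, a vertical $\C$ cannot sit inside one level-$n$ cell, so $\C$ contains a whole connected component of $E_n$, which by Lemma~\ref{lem:graphcon} contains a level-$n$ cell adjacent to $\vp_{\bi|_n}(F)$; hence $\dist(x,\C)\le 2\sqrt{2}N^{-n}$ for all $n$, and $\C$ contains vertical paths $\ell_n$ with $\dist(\ell_n,x)\to 0$. If there were three vertical components $\C,\C'',\C'$ ordered left to right, one picks points $z_n\in\ell_n\subset\C$ and $z_n'\in\ell_n'\subset\C'$ within $2N^{-n}$ of $x$; any vertical path $\ell\subset\C''$ must cross the segment $\ell(z_n,z_n')$ for every $n$, forcing $\dist(\ell,x)=0$ and hence $x\in\ell$, a contradiction. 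I would recommend abandoning the corner-like strategy for this proposition and establishing the accumulation-at-$x$ property instead; it is the key idea here and is reused in Corollary~\ref{cor:otheralsover} and Lemma~\ref{lem:leftrightside}.
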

\begin{proof}
    Let $\bi=i_1i_2\cdots\in\D^\infty$ be an infinite word such that $\{x\}=\bigcap_{n=1}^\infty \vp_{\i|_n}(F)$, and recall that $F\setminus\{x\}=\bigcup_{n=1}^\infty E_n$, where $E_n:=\bigcup_{\j\in\D^n\setminus\{\bi|_n\}}\vp_{\j}(F)$ is as in the proof of Lemma~\ref{lem:finitecomandnont}. By the above corollary, we may assume without loss of generality that all connected components of $F\setminus\{x\}$ are vertical.

    Let $\C$ be a component of $F\setminus\{x\}$. Since $\C$ is vertical, it cannot be contained in one level-$1$ cell. In particular, $\C\nsubseteq\vp_{i_1}(F)$. So for any $n\geq 1$, $\C\cap E_n\not=\varnothing$. Combining this with $E_n\subset F\setminus \{x\}$,  $\C$ must contain some connected component, say $\C_n$, of $E_n$. By definition, there is a connected component of the graph $\Gamma_n-\{\bi|_n\}$ of which the vertex set $V_n$ is such that $\C_n=\bigcup_{\j\in V_n} \vp_{\j}(F)$. Recalling Lemma~\ref{lem:graphcon}, we can further find $\omega_n\in V_n$ that is adjacent to $\bi|_n$, i.e., $\vp_{\omega_n}(F)\cap\vp_{\bi|_n}(F)\neq\varnothing$. Hence
    \[
        \dist(x,\C) \leq \dist(x,\C_n) \leq \dist(x,\vp_{\omega_n}(F)) \leq 2\sqrt{2}N^{-n}, \quad \forall n\geq 1.
    \]
    Combining with the fact that $\C$ is vertical and path connected, we can find vertical paths $\{\ell_n\}_{n=1}^\infty$ in $\C$ such that $\dist(\ell_n,x)\leq 2N^{-n}$.

    Suppose the lemma is false, i.e., there are two vertical (path) connected components $\C', \C''$ of $F\setminus\{x\}$ other than $\C$. Without loss of generality, assume that $\C,\C'',\C'$ lie from left to right. Similarly as above, we can find vertical paths $\{\ell'_n\}_{n=1}^\infty$ in $\C'$ such that $\dist(\ell'_n,x)\leq 2N^{-n}$. For every $n$, select $z_n\in \ell_n$ and $z'_n\in\ell'_n$ with $|x-z_n|\leq 2N^{-n}$ and $|x-z'_n|\leq 2N^{-n}$, respectively. Denoting by $\ell(z_n,z'_n)$ the line segment joining $z_n$ and $z'_n$, it is easy to see that $|z-x|\leq 2N^{-n}$ for all $z\in\ell(z_n,z'_n)$.

    Since $\C''$ is vertical, there is a vertical path $\ell\subset\C''$. Clearly, $\ell\cap\ell_n=\varnothing$ and $\ell\cap\ell'_n=\varnothing$ for all $n$. Since $\C''$ lies in the ``middle'' of $\C$ and $\C'$, $\ell$ must meet $\ell(z_n,z'_n)$ and hence $\dist(\ell,x)\leq 2N^{-n}$ for all $n$. This further tells us that $x\in\ell$, which leads to a contradiction since $\ell\subset\C''\subset F\setminus\{x\}$.
\end{proof}

\begin{corollary}\label{cor:otheralsover}
    Assume that $0<\#C_F<\infty$ and let $x\in C_F$. If connected components of $F\setminus\{x\}$ are all vertical (resp. horizontal), then for any $y\in C_F$, all components of $F\setminus\{y\}$ are also vertical (resp. horizontal).
\end{corollary}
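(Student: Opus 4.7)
The plan is to argue by contradiction using Corollary~\ref{cor:allverorhor} and Proposition~\ref{prop:twovertical} in tandem, reducing the statement to a purely topological incompatibility between ``vertical'' and ``horizontal'' components. Assume all connected components of $F\setminus\{x\}$ are vertical and suppose, for contradiction, that some $y\in C_F$ produces a component of $F\setminus\{y\}$ that is horizontal; by Corollary~\ref{cor:allverorhor} this would force every component of $F\setminus\{y\}$ to be horizontal. Then Proposition~\ref{prop:twovertical} gives that $F\setminus\{x\}$ has exactly two components $V_L,V_R$, both vertical, while $F\setminus\{y\}$ has exactly two components $H_B,H_T$, both horizontal.

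Next I would locate $y$ among the components of $F\setminus\{x\}$. Since $y\neq x$, we have $y\in V_L\cup V_R$, and after relabeling we may take $y\in V_L$. Then $y\notin V_R$ while $V_R$ is connected, so $V_R$ is a connected subset of $F\setminus\{y\}=H_B\sqcup H_T$. Consequently $V_R$ is entirely contained in one of the two horizontal components; after relabeling we may assume $V_R\subset H_B$.

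The punchline uses only elementary planar topology. The set $V_R$ is vertical, hence by Lemma~\ref{lem:pathcomponent} it is path-connected and contains a vertical path $p_R\subset[0,1]^2$ joining $[0,1]\times\{0\}$ to $[0,1]\times\{1\}$. Similarly, $H_T$ contains a horizontal path $\ell_T\subset[0,1]^2$ joining $\{0\}\times[0,1]$ to $\{1\}\times[0,1]$. Any such vertical and horizontal paths in $[0,1]^2$ must intersect (this is the standard topological fact underlying the Jordan curve / intermediate value argument that the paper already implicitly invokes). Picking $w\in p_R\cap\ell_T$, we obtain $w\in V_R\cap H_T\subset H_B\cap H_T=\varnothing$, a contradiction. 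The case where all components of $F\setminus\{x\}$ are horizontal is handled by an entirely symmetric argument (swapping the roles of vertical and horizontal).

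The only genuine subtlety is the invocation of ``any vertical path must meet any horizontal path in $[0,1]^2$,'' but this is standard and is already used implicitly in the proof of Proposition~\ref{prop:twovertical}. Apart from that, the proof is just a careful bookkeeping of which components of $F\setminus\{x\}$ and $F\setminus\{y\}$ contain $y$ and $x$ respectively, using Corollary~\ref{cor:allverorhor} to rule out ``corner-like'' components at the outset and Proposition~\ref{prop:twovertical} to pin down the count.
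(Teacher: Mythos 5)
Your proof is correct and follows essentially the same route as the paper's: both arguments come down to producing a vertical path in $F\setminus\{x\}$ that avoids $y$ and observing that any horizontal path in a component of $F\setminus\{y\}$ must cross it, forcing distinct components of $F\setminus\{y\}$ to meet. The only cosmetic difference is that you locate $y$ inside one of the two vertical components via Proposition~\ref{prop:twovertical} and work with the other, whereas the paper simply picks two disjoint vertical paths and notes that $y$ can lie on at most one of them.
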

\begin{proof}
    By Lemmas~\ref{lem:pathcomponent} and~\ref{prop:twovertical}, there are exactly two vertical path connected components of $F\setminus\{x\}$. In particular, we can find two disjoint vertical paths $\ell, \ell'\subset F\setminus\{x\}$. Note that at least one of them does not pass $y$. Without loss of generality, assume that $y\notin\ell$. By Corollary~\ref{cor:allverorhor}, connected components of $F\setminus\{y\}$ are all vertical or all horizontal. In the former case, there is nothing to prove. In the latter case, each of those components contains a horizontal path. But all these paths must meet the vertical path $\ell$. Therefore, they belong to the same connected component of $F\setminus\{y\}$, which is a contradiction.
\end{proof}

\begin{lemma}\label{lem:furthercorner}
    Suppose $0<\#C_F<\infty$, $x\in C_F$ and $\C$ is a vertical connected component of $F\setminus\{x\}$. Then
    \[
        \C \cap ((F+(0,1))\setminus\{x\}) \neq\varnothing \quad\text{ and }\quad \C \cap ((F+(0,-1))\setminus\{x\}) \neq\varnothing.
    \]
    In particular, $(F\setminus\{x\})\cup(F+(0,1))$ and $(F\setminus\{x\})\cup(F+(0,-1))$ are both connected.
\end{lemma}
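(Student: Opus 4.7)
The argument hinges on combining the topological structure at $x$ with the self-similar recursion of $F$. By Corollary~\ref{cor:allverorhor} and Proposition~\ref{prop:twovertical}, the hypothesis $0<\#C_F<\infty$ together with the given vertical $\C$ forces $F\setminus\{x\}=\C\sqcup\C'$, where both components are vertical and path-connected (by Lemma~\ref{lem:pathcomponent}). It suffices to treat $\C\cap((F+(0,1))\setminus\{x\})\neq\varnothing$; the downward shift reduces to the same statement applied to the vertical reflection of $F$, which remains a connected non-fragile GSC with the same cut-point count.

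First, observe that $F+(0,1)$ meets $F$ only along $\{y=1\}$, at points $(b,1)$ for which $(b,0)\in F$. Hence $\C\cap(F+(0,1))\neq\varnothing$ amounts to exhibiting a point on the top edge of $\C$ whose $x$-coordinate is the projection of some bottom-edge point of $F$. By path-connectedness, fix a vertical path $\gamma\subset\C$ with $\gamma(1)=(a_1,1)$; if $(a_1,0)\in F$ we are done immediately, so assume for contradiction that every point of $\C\cap\{y=1\}$ sits above a missing bottom column.

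Because $\C$ is open in $F\setminus\{x\}$ (Lemma~\ref{lem:pathcomponent}), for $n$ large the level-$n$ cell $\vp_{\bm\i}(F)$ containing $(a_1,1)$ lies entirely in $\C$; note that every digit of $\bm\i$ must then be a top-row digit of $\D$, since $(a_1,1)$ has $y$-coordinate exactly $1$. Viewed as a rescaled copy of $F$, this cell inherits the same obstruction at every level: none of its top-edge points aligns with a bottom column of $F$. Iterating this passage to sub-cells and invoking Remark~\ref{rem:fixedptcutpt}, one would extract an infinite sequence of pairwise distinct cut points produced by the iterates of $\vp_{\bm\i}$, contradicting $\#C_F<\infty$.

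The principal obstacle is the precise book-keeping at the iteration step: one must verify that each newly produced fixed point is a genuine cut point of $F$ (and not just of the rescaled sub-cell), and that the successive fixed points remain distinct---both of which should follow from Lemma~\ref{lem:iiawayfromj} and the explicit form of the iterates $\vp_{\bm\i}^{k}$. Once the first non-emptiness is in hand, the \emph{in particular} part is a short corollary: the same argument applied to $\C'$ shows that $F+(0,1)$ meets both components of $F\setminus\{x\}$, and since $F+(0,1)$ is itself connected, the union $(F\setminus\{x\})\cup(F+(0,1))$ is connected. The analogous conclusion for $F+(0,-1)$ follows by the vertical-reflection symmetry noted above.
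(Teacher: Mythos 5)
Your setup is fine (reducing the claim to finding a point $(b,1)\in\C$ with $(b,0)\in F$, and reducing the downward case by reflection), but the heart of the proof --- deriving a contradiction with $\#C_F<\infty$ from the hypothesis that no top-edge point of $\C$ lies above a bottom column --- is missing, and the mechanism you sketch for it does not work. You propose to ``extract an infinite sequence of pairwise distinct cut points produced by the iterates of $\vp_{\bm\i}$'' via Remark~\ref{rem:fixedptcutpt}. But the iterates $\vp_{\bm\i}^{k}$ all share a \emph{single} fixed point, so they cannot produce distinct cut points in the way described; moreover Remark~\ref{rem:fixedptcutpt} only applies once the hypotheses of Proposition~\ref{prop:twogoodmeanscutpt} have been verified for $\bm\i$, and you verify none of them. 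Indeed it is not clear why the ``obstruction'' (top-edge points of $\C$ misaligned with bottom columns of $F$) should force the fixed point of $\vp_{\bm\i}$, or of any map, to be a cut point at all. The passage ``this cell inherits the same obstruction at every level'' is not an argument; it is a restatement of the assumption inside a sub-cell, and no new cut point is ever exhibited.

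The paper's proof uses a different and essential ingredient that your proposal never touches: by Proposition~\ref{prop:twovertical} one may take $\C$ to be the \emph{leftmost} of the two vertical components, so that $\C$ avoids $F+(1,1)$, $F+(1,0)$ and $F+(1,-1)$; choosing the extreme corner digit $i_*=(a_*,b_*)$ (so that $i_*(\leftarrow),i_*(\swarrow),i_*(\nwarrow),i_*(\downarrow)\notin\D$), the contradiction hypothesis $\C\cap((F+(0,1))\setminus\{x\})=\varnothing$ then kills the only remaining neighbor direction $\uparrow$, whence $\vp_{i_*}(\C)$ is an entire connected component of $F\setminus\{\vp_{i_*}(x)\}$. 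Being contained in one level-$1$ cell, it is corner-like, and Proposition~\ref{prop:cornermeansinf} yields $\#C_F=+\infty$, a contradiction. If you want to salvage your outline, this corner-trapping step (or some substitute for it) is exactly the idea you need to supply; as written, the proof has a genuine gap at its central step.
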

\begin{proof}
    Without loss of generality, suppose on the contrary that $\C$ is vertical but $\C \cap ((F+(0,1))\setminus\{x\}=\varnothing$. By Proposition~\ref{prop:twovertical}, we may also assume that $\C$ is the leftmost component of $F\setminus\{x\}$. Thus
    \[
        \C \cap (F+i)=\varnothing, \quad \forall i\in\{(1,1),(1,0),(1,-1)\}.
    \]
    Letting $i_*=(a_*,b_*)$ be as in the proof of Proposition~\ref{prop:cornermeansinf}, we see that
    \begin{align*}
        \vp_{i_*}(\C) \cap \Big( \bigcup_{i\in\D\setminus\{i_*\}} \vp_i(F)\setminus\{\vp_{i_*}(x)\} \Big) &= \vp_{i_*}(\C) \cap \Big( \bigcup_{t\in\P} \vp_{i_*(t)}(F)\setminus\{\vp_{i_*}(x)\} \Big) \\
        &\subset \vp_{i_*}(\C) \cap (\vp_{i_*(\uparrow)}(F)\setminus\{\vp_{i_*}(x)\}) \\
        &= \vp_{i_*}(\C\cap ((F+(0,1))\setminus\{x\})) = \varnothing.
    \end{align*}
    This implies that $\vp_{i_*}(\C)$ is a connected component of $F\setminus\{\vp_{i_*}(x)\}$. Further, since $\vp_{i_*}(\C)\subset\vp_{i_*}(F)$, it is corner-like. By Proposition~\ref{prop:cornermeansinf}, we obtain a contradiction.

    Combining with the connectedness of $F$ and Proposition~\ref{prop:twovertical}, it is easy to see that the second statement holds.
\end{proof}

\begin{lemma}\label{lem:leftrightside}
    Let $x\in C_F$ and suppose that $F\setminus\{x\}$ contains exactly two vertical connected components $\C_{x,1}$ and $\C_{x,2}$. If $\C_{x,1}$ lies to the left of $\C_{x,2}$, then
    \[
        F\cap(\{0\}\times[0,1]) = \C_{x,1}\cap(\{0\}\times[0,1]),\, F\cap(\{1\}\times[0,1]) = \C_{x,2}\cap(\{1\}\times[0,1]).
    \]
\end{lemma}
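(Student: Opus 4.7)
The plan is to argue by a Jordan-curve style separation inside the unit square. By Lemma~\ref{lem:pathcomponent}, both $\C_{x,1}$ and $\C_{x,2}$ are path-connected and $F\setminus\{x\}$ is locally path-connected, so I can pick \emph{simple} arcs $\gamma_i\subset \C_{x,i}$ joining a point $(a_i,0)$ on the bottom edge of $[0,1]^2$ to a point $(c_i,1)$ on the top edge (verticality of each component guarantees such endpoints exist, and local path-connectedness lets me refine any path to a simple arc between them). Since $\C_{x,1}$ and $\C_{x,2}$ are disjoint and the ``lies to the left'' inequality forces $a_1\leq a_2$ and $c_1\leq c_2$, the fact that $\gamma_1(0),\gamma_2(0)$ and $\gamma_1(1),\gamma_2(1)$ live in different components of $F\setminus\{x\}$ sharpens these to strict inequalities $a_1<a_2$ and $c_1<c_2$.

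Next I invoke the classical fact that a simple arc in $[0,1]^2$ joining a point of the bottom edge to a point of the top edge separates the square into exactly two open regions $U_L$ and $U_R$, with the closure of $U_L$ containing the left edge away from $\gamma_1$ and the closure of $U_R$ containing the right edge away from $\gamma_1$. Because $\gamma_2$ is connected, disjoint from $\gamma_1$ (different components of $F\setminus\{x\}$), and has its two endpoints strictly in the right region, I get $\gamma_2\subset U_R$. Now take any $p=(0,v_0)\in F\cap(\{0\}\times[0,1])$ with $p\neq x$. If $p\in\gamma_1$ then trivially $p\in\C_{x,1}$; otherwise $p\in U_L$, and if I had $p\in\C_{x,2}$ I could join $p$ to $\gamma_2(0)\in U_R$ by a path in $\C_{x,2}$, which would be disjoint from $\gamma_1\subset\C_{x,1}$ and hence a connected subset of $U_L\sqcup U_R$ meeting both parts---a contradiction. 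Since by Proposition~\ref{prop:twovertical} every point of $F\setminus\{x\}$ lies in $\C_{x,1}\cup\C_{x,2}$, we conclude $p\in\C_{x,1}$. A symmetric argument for the right edge, interchanging the roles of the two arcs, gives $(F\cap(\{1\}\times[0,1]))\setminus\{x\}\subset\C_{x,2}$.

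This yields the inclusions $(F\cap(\{0\}\times[0,1]))\setminus\{x\}\subset \C_{x,1}\cap(\{0\}\times[0,1])$ together with its right-edge counterpart, and the reverse inclusions are immediate. The main obstacle in upgrading these to the stated equalities is verifying that the cut point $x$ itself does not sit on either vertical boundary edge. I expect this to require a local corner-cell argument in the spirit of Proposition~\ref{prop:cornermeansinf}: if $x$ lay on the left edge, then selecting the bottom-left-most digit $i_*=(a_*,b_*)\in\D$ and iterating the contraction $\vp_{i_*}$ should place a scaled copy $\vp_{i_*}^k(\C_{x,1})$ inside a corner of $\vp_{i_*}^{k-1}([0,1]^2)$ insulated from the rest of $F$, producing a corner-like connected component of $F\setminus\{\vp_{i_*}^{k-1}(x)\}$ and thus forcing $\#C_F=\infty$ by Proposition~\ref{prop:cornermeansinf}, contrary to hypothesis. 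Aside from this boundary case, the only routine-but-delicate step is producing the simple arcs and justifying that a bottom-to-top simple arc does separate the square.
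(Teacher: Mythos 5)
Your main separation argument is sound in spirit and is essentially the route the paper takes (the paper phrases the same step as: a vertical path of $\C_{x,2}$ through a left-edge point would have to cross every vertical path of $\C_{x,1}$, which is impossible since the components are disjoint). Two caveats there: the ``classical fact'' you invoke is not literally true for a simple arc that meets $\partial([0,1]^2)$ at interior points of the arc (the complement can then have more than two components), and an arc of $\C_{x,1}$ from the bottom edge to the top edge cannot in general be arranged to be a genuine crosscut, since $F$ may force it to touch the left or right edge. This is repairable (e.g.\ extend the arc by vertical rays below $(a_1,0)$ and above $(c_1,1)$ and apply the separation theorem for a properly embedded line), and the paper is no more rigorous on this point, so I would not count it as a gap.

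The genuine gap is the boundary case $x\in(\{0\}\cup\{1\})\times[0,1]$, which you correctly identify as indispensable for the stated equalities but only ``expect'' to handle via Proposition~\ref{prop:cornermeansinf}. That reduction does not work: $\C_{x,1}$ is \emph{vertical}, so it meets $[0,1]\times\{1\}$, hence $\vp_{i_*}(\C_{x,1})$ meets the top edge of $\vp_{i_*}([0,1]^2)$ and may well intersect $\vp_{i_*(\uparrow)}(F)$; consequently $\vp_{i_*}(\C_{x,1})$ need not be a connected component of $F\setminus\{\vp_{i_*}(x)\}$ at all, let alone a corner-like one, and iterating $\vp_{i_*}$ does not change this. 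The insulation argument of Proposition~\ref{prop:cornermeansinf} relies precisely on the component missing two adjacent sides of the square, which is exactly what verticality rules out. The paper closes this case by a different mechanism: from the proof of Proposition~\ref{prop:twovertical}, \emph{every} component of $F\setminus\{x\}$ contains points within $2\sqrt{2}N^{-n}$ of $x$ for every $n$; if $x$ lay on the left edge, then fixing a vertical path $\ell_{x,1}\subset\C_{x,1}$ and using that $\C_{x,2}$ lies to the right of $\C_{x,1}$ gives $\dist(x,\C_{x,2})\geq\dist(x,\ell_{x,1})>0$, contradicting the accumulation of $\C_{x,2}$ at $x$. You need an argument of this kind; as written, this step of your proof fails.
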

\begin{proof}
    We first claim that $x\notin\{0,1\}\times[0,1]$. Suppose on the contrary that $x\in\{0\}\times[0,1]$ and let $\ell_{x,1}\subset\C_{x,1}$ be a vertical path. Then $\delta:=\dist(x,\ell_{x,1})>0$. Since $x$ lies in the leftmost side of the unit square and $\C_{x,2}$ is to the right of $\C_{x,1}$, $\dist(x,\C_{x,2})\geq\dist(x,\ell_{x,1})=\delta>0$. But as in the proof of Proposition~\ref{prop:twovertical}, there are vertical paths in $\C_{x,2}$ that is arbitrarily close to $x$. This is a contradiction. Similarly, $x\notin\{1\}\times[0,1]$.

    Suppose there is some $z\in F\cap(\{0\}\times[0,1])$ but $z\notin\C_{x,1}$. Then $z\in \C_{x,2}$ and we can find a vertical path $\ell_z\subset\C_{x,2}$ passing $z$. Since $\C_{x,1}$ lies to the left of $\C_{x,2}$ and $z$ lies in the leftmost side of the unit square, every vertical path in $\C_{x,1}$ must intersect $\ell_z$. This is impossible since $\C_{x,1}\cap\C_{x,2}=\varnothing$. Now we obtain a contradiction, so $F\cap(\{0\}\times[0,1]) \subset \C_{x,1}$. Similarly, $F\cap(\{1\}\times[0,1]) \subset \C_{x,2}$. This establishes the equalities.
\end{proof}

\begin{proposition}\label{prop:vpixisthenvpiyis}
    Assume that $0<\#C_F<\infty$ and let $j\in\D$ and $x\in C_F$. If $\vp_j(x)\in C_F$ while $\vp_j(x)\notin\vp_i(F)$ for every $i\in\D\setminus\{j\}$, then $\vp_j(y)\in C_F$ for all $y\in C_F$.
\end{proposition}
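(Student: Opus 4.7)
The plan is to exploit the hypothesis to produce a topological obstruction on $R:=\bigcup_{i\in\D\setminus\{j\}}\vp_i(F)$ that is intrinsic to $R$ (independent of the choice of cut point), and then transfer this obstruction from $\vp_j(x)$ to an arbitrary $\vp_j(y)$. By Corollary~\ref{cor:otheralsover} and Proposition~\ref{prop:twovertical} I may assume, without loss of generality, that for every $z\in C_F$ the set $F\setminus\{z\}$ splits into exactly two vertical components $\C_{z,1}$ (left) and $\C_{z,2}$ (right). Write $L_j=\vp_j(F\cap(\{0\}\times[0,1]))$, $R_j=\vp_j(F\cap(\{1\}\times[0,1]))$, and $\partial_j=\vp_j(F)\cap R$.

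I would first pin down the geometry of $F\setminus\{\vp_j(x)\}=\vp_j(\C_{x,1})\cup\vp_j(\C_{x,2})\cup R$. Neither $\vp_j(\C_{x,i})$ can be a vertical component of $F\setminus\{\vp_j(x)\}$ on its own since both live inside the small square $\vp_j([0,1]^2)$ and so cannot reach both $y=0$ and $y=1$; connectedness of $F$ therefore forces $\vp_j(\C_{x,1})$ and $\vp_j(\C_{x,2})$ to lie in distinct components of $F\setminus\{\vp_j(x)\}$, say $\mathcal{V}_1$ and $\mathcal{V}_2$. Applying Lemma~\ref{lem:leftrightside} to both $x$ and $\vp_j(x)$, I would obtain $L_j\subset\vp_j(\C_{x,1})\subset\mathcal{V}_1$ and $R_j\subset\vp_j(\C_{x,2})\subset\mathcal{V}_2$. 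The key consequence is then immediate: \emph{no path in $R$ joins $L_j$ to $R_j$}, since such a path would lie entirely in $R\subset F\setminus\{\vp_j(x)\}$ and connect $\mathcal{V}_1$ to $\mathcal{V}_2$. Note that this statement refers only to $R$ and to the GSC, with no reference to $x$.

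Given a general $y\in C_F$, I would show $F\setminus\{\vp_j(y)\}$ is disconnected by proving that $L_j$ and $R_j$ lie in different components. Suppose for contradiction that a path $Q\subset F\setminus\{\vp_j(y)\}$ from $L_j$ to $R_j$ exists. Since $L_j\subset\vp_j(\C_{y,1})$ and $R_j\subset\vp_j(\C_{y,2})$ (Lemma~\ref{lem:leftrightside}) lie in distinct components of the cell $\vp_j(F)\setminus\{\vp_j(y)\}$, the path $Q$ must leave $\vp_j(F)$ at least once. Decomposing $Q$ into its maximal sub-arcs inside $\vp_j(F)$ and its excursions into $R$, and tracking the ``$y$-side'' labels of the sub-arcs, I would locate at least one $R$-excursion of $Q$ whose two endpoints on $\partial_j$ lie in different $y$-sides; this excursion is contained in a single connected component $R_i$ of $R$. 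The goal is then to splice such an $R_i$ with appropriate sub-arcs of $Q$ inside $\vp_j(F)$ to produce an honest $R$-path from $L_j$ to $R_j$, directly contradicting the obstruction from the previous paragraph. The case $\vp_j(y)\in R$ proceeds along the same lines, because removing one point from $R$ cannot create any new $R$-path, so the same obstruction still applies.

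The \textbf{main obstacle} is the splicing step, which amounts to proving that the ``$x$-side'' and ``$y$-side'' labels of each $R$-component on $\partial_j$ agree. On portions of $\partial_j$ lying inside $L_j$ or $R_j$, Lemma~\ref{lem:leftrightside} immediately gives the agreement. The delicate case is for $R$-components whose contact with $\partial_j$ sits on the top or bottom edges of $\vp_j([0,1]^2)$, since different cut points can a priori partition the top and bottom of $F$ differently (as one can already check in Example~\ref{exa:exactlyncutpts}). Resolving this requires tracing such an $R$-component outward with the help of Lemma~\ref{lem:furthercorner} (which forces both $\C_{y,i}$ to meet the vertical shifts $F+(0,\pm 1)$) until one can connect it to $L_j$ or $R_j$, together with the non-fragile hypothesis to rule out degenerate configurations. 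This structural bookkeeping, paired with a case-by-case analysis on which edges of $\vp_j([0,1]^2)$ the component meets, is the technical core of the proof.
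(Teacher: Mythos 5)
Your overall strategy is the same as the paper's: use Lemma~\ref{lem:leftrightside} to turn the disconnection at $\vp_j(x)$ into a separation statement that does not mention $x$ (the left and right neighbors of $\vp_j(F)$ cannot be joined inside $R=\bigcup_{i\ne j}\vp_i(F)$), and then transfer it to $\vp_j(y)$ via Lemma~\ref{lem:leftrightside} applied to $y$. But there is a genuine gap, and it sits exactly at what you call the ``main obstacle.'' The paper's first and decisive step is to prove that $j\pm(0,1)\notin\D$, i.e.\ there are \emph{no} cells directly above or below $\vp_j(F)$, so that $\partial_j\subset L_j\cup R_j$ and your ``delicate case'' (an $R$-component meeting the top or bottom edge of $\vp_j([0,1]^2)$) simply does not occur. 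The argument is short: if $j+(0,1)\in\D$, then the hypothesis $\vp_j(x)\notin\vp_i(F)$ for $i\ne j$ (which your sketch never exploits in an essential way) gives $\vp_j(x)\notin\vp_{j+(0,1)}(F)$, so $\vp_{j+(0,1)}(F)\cup(\vp_j(F)\setminus\{\vp_j(x)\})$ is a scaled copy of $(F+(0,1))\cup(F\setminus\{x\})$, which is connected by Lemma~\ref{lem:furthercorner}; feeding this into Lemma~\ref{lem:connected} shows $F\setminus\{\vp_j(x)\}$ is connected, contradicting $\vp_j(x)\in C_F$. Your proposed resolution --- ``tracing such an $R$-component outward \dots\ until one can connect it to $L_j$ or $R_j$,'' plus unspecified case analysis --- is not carried out and is not obviously workable; the correct use of Lemma~\ref{lem:furthercorner} is to kill the configuration outright, not to route paths around it.

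Two smaller points. First, once $j\pm(0,1)\notin\D$ is in hand, your splicing machinery is heavier than needed: every excursion of $Q$ into $R$ has both endpoints in $L_j\cup R_j$, an excursion with one endpoint in each would itself be the forbidden $R$-path, and a sub-arc of $Q$ inside $\vp_j(F)\setminus\{\vp_j(y)\}$ joining $L_j$ to $R_j$ contradicts Lemma~\ref{lem:leftrightside} for $y$; so the ``side'' can never change and $Q$ cannot exist. (The paper avoids paths altogether by partitioning $\D\setminus\{j\}=V_1\cup V_2$ and exhibiting $F\setminus\{\vp_j(y)\}$ as a disjoint union of two nonempty closed sets, which also sidesteps the question of whether $L_j$ and $R_j$ are nonempty.) Second, your assertion that ``connectedness of $F$ forces'' $\vp_j(\C_{x,1})$ and $\vp_j(\C_{x,2})$ to lie in distinct components of $F\setminus\{\vp_j(x)\}$ needs an argument: one uses that $\dist(\vp_j(x),R)>0$ together with the fact (from the proof of Proposition~\ref{prop:twovertical}) that each of the two components of $F\setminus\{\vp_j(x)\}$ contains points arbitrarily close to $\vp_j(x)$, so neither component can be contained in $R$.
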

\begin{proof}
    If $y=x$ then there is nothing to prove, so it suffices to consider when $y\neq x$. By Proposition~\ref{prop:twovertical}, we may assume that $F\setminus\{x\}$ contains exactly two vertical connected components $\C_{x,1}$ and $\C_{x,2}$. Without loss of generality, assume that $\C_{x,1}$ lies to the left of $\C_{x,2}$, and write $x_j=\vp_j(x)$ for convenience.

    We claim that $j\pm(0,1)\notin\D$.
    Suppose on the contrary that $j+(0,1)\in\D$. Then $x_j\notin\vp_{j+(0,1)}(F)$. Note that
    \begin{align*}
        \vp_{j+(0,1)}(F) \cup (\vp_{j}(F)\setminus\{x_j\}) &= \vp_{j+(0,1)}(F) \cup \vp_{j}(F\setminus\{x\}) \\
        &= \frac{(F+(0,1))\cup (F\setminus\{x\})}{N} + \frac{j}{N}
    \end{align*}
    is a scaled copy of $(F+(0,1))\cup (F\setminus\{x\})$. Then, since connected components of $F\setminus\{x\}$ are both vertical, we see from Lemma~\ref{lem:furthercorner} that $\vp_{j+(0,1)}(F) \cup (\vp_{j}(F)\setminus\{x_j\})$ is connected. Similarly, if $j-(0,1)\in\D$ then $\vp_{j-(0,1)}(F) \cup (\vp_{j}(F)\setminus\{x_j\})$ is connected. Writing $\Lambda=\{j,j\pm(0,1)\}\cap\D$, we see that
    \[
        \bigcup_{t\in\Lambda} \vp_t(F)\setminus\{x_j\} = (\vp_j(F)\setminus\{x_j\}) \cup \Big( \bigcup_{t\in\D\cap\{j\pm(0,1)\}}\vp_t(F) \Big)
    \]
    is a connected set. Since
    \[
        F = \Big( \bigcup_{t\in\Lambda} \vp_t(F) \Big) \cup \Big( \bigcup_{t\notin\Lambda} \vp_t(F) \Big) 
    \]
    is connected, applying Lemma~\ref{lem:connected} (to $B_1=\bigcup_{t\in\Lambda}\vp_t(F)$, $A=\{x_j\}$ and $\{B_2,\ldots,B_m\}=\{\vp_t(F):t\notin\Lambda\}$) gives us the connectedness of
    \[
        F\setminus\{x_j\} = \Big( \bigcup_{t\in\Lambda} \vp_t(F)\setminus\{x_j\} \Big) \cup \Big( \bigcup_{t\notin\Lambda} \vp_t(F) \Big),
    \]
    which contradicts that $x_j\in C_F$. This proves the claim.

    By Corollary~\ref{cor:allverorhor} and Proposition~\ref{prop:twovertical}, $F\setminus\{x_j\}$ contains exactly two vertical connected components. Denote by $\C_{x_j,1}$ (resp. $\C_{x_j,2}$) the component of $F\setminus\{x_j\}$ containing $\vp_j(\C_{x,1})$ (resp. $\vp_j(\C_{x,2})$). Note that
    \begin{align*}
        \C_{x_j,1}\cup\C_{x_j,2} &= F\setminus\{x_j\} = \Big( \bigcup_{i\in\D\setminus\{j\}} \vp_i(F) \Big) \cup \vp_j(F\setminus\{x\}) \\
        &= \Big( \bigcup_{i\in\D\setminus\{j\}} \vp_i(F) \Big) \cup \vp_j(\C_{x,1}) \cup \vp_{j}(\C_{x,2}).
    \end{align*}
    So we can decompose $\D\setminus\{j\}=V_1\cup V_2$ such that
    \[
        \C_{x_j,1} = \Big( \bigcup_{i\in V_1} \vp_i(F) \Big) \cup \vp_j(\C_{x,1}) \quad\text{and}\quad \C_{x_j,2} = \Big( \bigcup_{i\in V_2} \vp_i(F) \Big) \cup \vp_j(\C_{x,2}).
    \]
    For convenience, write $F_{V_p}=\bigcup_{i\in V_p} \vp_i(F)$ for $p=1,2$.

    Again, by Corollary~\ref{cor:allverorhor} and Proposition~\ref{prop:twovertical}, $F\setminus\{y\}$ contains exactly two vertical connected components. Denote the left one by $\C_{y,1}$ and the right one by $\C_{y,2}$. Since $j\pm(0,1)\notin\D$ and $\C_{x,1}$ lies to the left of $\C_{x,2}$,
    \begin{align*}
        \varnothing &= F_{V_1} \cap \vp_j(\C_{x,2}) \\
        &= \Big( \bigcup_{t\in\P:j(t)\in V_1}\vp_{j(t)}(F) \Big)\cap \vp_j(\C_{x,2}) \\
        &= \Big( \bigcup_{t\in\{\nearrow,\rightarrow,\searrow\}:j(t)\in V_1}\vp_{j(t)}(F) \Big)\cap \vp_j(\C_{x,2}) \\
        &= \Big( \bigcup_{t\in\{\nearrow,\rightarrow,\searrow\}:j(t)\in V_1}\vp_{j(t)}(F) \Big)\cap \vp_j(\C_{x,2}\cap(\{1\}\times[0,1])) \\
        &= \Big( \bigcup_{t\in\{\nearrow,\rightarrow,\searrow\}:j(t)\in V_1}\vp_{j(t)}(F) \Big)\cap \vp_j(F\cap(\{1\}\times[0,1])),
    \end{align*}
    where the last equality follows from Lemma~\ref{lem:leftrightside}. Thus
    \[
      \Big( \bigcup_{t\in\{\nearrow,\rightarrow,\searrow\}:j(t)\in V_1}\vp_{j(t)}(F) \Big)\cap \vp_j(F\cap(\{1\}\times[0,1]))=\varnothing.
    \]
    Since Lemma~\ref{lem:leftrightside} also implies that $F\cap(\{1\}\times[0,1])=\C_{y,2}\cap(\{1\}\times[0,1])$, writing $y_j=\vp_j(y)$, we have
    \begin{align*}
        \big( F_{V_1}\setminus\{y_j\} \big) \cap \vp_j(\C_{y,2}) &\subset \Big( \bigcup_{t\in\{\nearrow,\rightarrow,\searrow\}:j(t)\in V_1}\vp_{j(t)}(F) \Big)\cap \vp_j(\C_{y,2}) \\
        &= \Big( \bigcup_{t\in\{\nearrow,\rightarrow,\searrow\}:j(t)\in V_1}\vp_{j(t)}(F) \Big) \cap \vp_j(\C_{y,2}\cap(\{1\}\times[0,1])) \\
        &= \Big( \bigcup_{t\in\{\nearrow,\rightarrow,\searrow\}:j(t)\in V_1}\vp_{j(t)}(F) \Big)\cap \vp_j(F\cap(\{1\}\times[0,1])) = \varnothing.
    \end{align*}
    Similarly, we can show that $\big( F_{V_2}\setminus\{y_j\} \big) \cap \vp_j(\C_{y,1}) = \varnothing$. So the set
    \[
        F\setminus\{y_j\} = \big( \vp_j(\C_{y,1}) \cup \big( F_{V_1}\setminus\{y_j\} \big) \big) \cup \big( \vp_j(\C_{y,2}) \cup \big( F_{V_2}\setminus\{y_j\} \big) \big),
    \]
    as a union of two disjoint closed subsets of $F\setminus\{y_j\}$, should be disconnected. In particular, $y_j\in C_F$. 
\end{proof}

We are now ready to prove Theorem~\ref{thm:2orinfinity}.

\begin{proof}[Proof of Theorem~\ref{thm:2orinfinity}]
    Recall that in the proof of Theorem~\ref{thm:goodcp}, we actually show that there are $x\in C_F$ and $\omega\in\D^*$ such that $\vp_\omega(x)=x$ (please see Remark~\ref{rem:fixedptcutpt}).
    Since we can regard $F$ as the attractor associated with $\{\vp_{\i}: \i\in\D^n\}$ for all $n\geq 1$, we may assume without loss of generality that $\omega\in\D^1$. By Lemma~\ref{lem:iiawayfromj}, $\vp_\omega^2(F)\cap\vp_i(F)=\varnothing$ for all $i\in\D\setminus\{\omega\}$. So $x=\vp_\omega^2(x)\notin\vp_i(F)$ for every such $i$.

    Since $\#C_F\geq 2$, there is some $y\in C_F$ with $y\neq x$. By Proposition~\ref{prop:vpixisthenvpiyis}, $\vp_\omega(y)\in C_F$. Then it follows from an easy induction process that $\vp_\omega^n(y)\in C_F$ for all $n\geq 1$. Note that for any $k\in \mathbb{Z}^+$, $\vp_{\omega}^k$ is an bijection on $\R^2$. Thus, from the facts that $y\neq x$ and $x$ is the fixed point of $\vp_\omega$, $\vp_\omega^n(y)\neq\vp_\omega^m(y)$ whenever $n\neq m$. So we obtain an infinite number of cut points of $F$, which completes the proof.
\end{proof}

\begin{lemma}\label{lem:desiredcoding}
    Let $F$ be a non-fragile connected GSC and let $x\in C_F$. Then there is some infinite word $\bi=i_1i_2\cdots\in\D^\infty$ such that $\{x\}=\bigcap_{n=1}^\infty\vp_{\bi|_n}(F)$ and $\vp_{\bi|_n}^{-1}(x)\in C_F$ for all $n\geq 1$.
\end{lemma}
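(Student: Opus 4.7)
The plan is to reduce the lemma to the following one-step claim: \emph{for every $y \in C_F$ one can find $i \in \D$ with $y \in \vp_i(F)$ and $\vp_i^{-1}(y) \in C_F$}. Accepting this claim, I build the desired coding inductively. Set $y_0 := x$ and, at step $n \geq 0$, apply the claim to $y_n \in C_F$ to obtain $i_{n+1} \in \D$ such that $y_n \in \vp_{i_{n+1}}(F)$ and $y_{n+1} := \vp_{i_{n+1}}^{-1}(y_n) \in C_F$. Telescoping, $y_n = \vp_{\bi|_n}^{-1}(x)$ and $x = \vp_{\bi|_n}(y_n) \in \vp_{\bi|_n}(F)$ for every $n$; since $\diam \vp_{\bi|_n}(F) \leq \sqrt{2}\, N^{-n} \to 0$, the nested intersection $\bigcap_n \vp_{\bi|_n}(F)$ collapses to $\{x\}$, while $\vp_{\bi|_n}^{-1}(x) = y_n \in C_F$ by construction.

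To prove the one-step claim, fix $y \in C_F$ and let $\Lambda := \{i \in \D : y \in \vp_i(F)\}$. Suppose for contradiction that $\vp_i^{-1}(y) \notin C_F$ for every $i \in \Lambda$, equivalently that $\vp_i(F) \setminus \{y\}$ is connected for every $i \in \Lambda$. Since $\vp_i(F) \setminus \{y\} = \vp_i(F)$ is connected for $i \notin \Lambda$ automatically, \emph{every} $\vp_i(F) \setminus \{y\}$ ($i \in \D$) is connected. Introduce an auxiliary graph $H_y$ on vertex set $\D$ with an edge $\{i,j\}$ precisely when $(\vp_i(F) \cap \vp_j(F)) \setminus \{y\} \neq \varnothing$; that is, $H_y$ is obtained from $\Gamma_1$ by deleting the edges whose cell intersection reduces to $\{y\}$. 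The heart of the argument is the equivalence:  under our connectedness hypothesis, $F \setminus \{y\}$ is connected if and only if $H_y$ is connected. In one direction, if $H_y$ is connected, the connected pieces $\vp_i(F) \setminus \{y\}$ can be chained together along the shared points witnessing $H_y$-edges, making the union $F \setminus \{y\}$ connected. In the other direction, a partition $\D = V_1 \sqcup V_2$ with no $H_y$-edge across produces the disjoint closed non-empty subsets $\bigcup_{i \in V_k} \vp_i(F) \setminus \{y\}$ of $F \setminus \{y\}$, yielding a separation.

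Since $y \in C_F$, $F \setminus \{y\}$ is disconnected, so $H_y$ is disconnected; choose a non-trivial partition $\D = V_1 \sqcup V_2$ with no cross-edges. Then
\[
  \Bigl( \bigcup_{i \in V_1} \vp_i(F) \Bigr) \cap \Bigl( \bigcup_{i \in V_2} \vp_i(F) \Bigr) \subseteq \{y\}.
\]
This intersection cannot be empty, for otherwise $F$ would split into two disjoint non-empty closed sets, contradicting its connectedness. Hence the intersection equals $\{y\}$ exactly, and Lemma~\ref{lem:evenfragile} with $m = 1$ forces $F$ to be fragile, contradicting the standing hypothesis. The main obstacle is the equivalence in the preceding paragraph, and specifically the forward direction, which leans essentially on having first reduced to the case where each $\vp_i(F) \setminus \{y\}$ is already connected; once this is in hand, the bookkeeping that extracts a fragilizing partition is completely routine.
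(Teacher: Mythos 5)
Your proof is correct and follows essentially the same route as the paper: reduce to the one-step claim that some $i\in\D$ with $y\in\vp_i(F)$ has $\vp_i^{-1}(y)\in C_F$, argue by contradiction that otherwise every $\vp_i(F)\setminus\{y\}$ is connected, and extract a decomposition $\D=\D_1\sqcup\D_2$ whose cell-unions meet exactly in $\{y\}$, contradicting non-fragility via Lemma~\ref{lem:evenfragile}. Your auxiliary graph $H_y$ is just a different bookkeeping device for the paper's grouping of cells by connected component of $F\setminus\{y\}$; both arguments are sound.
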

\begin{proof}
    Let $\Lambda=\{i\in\D: x\in\vp_i(F)\}$. We first show that there is some $i_1\in\Lambda$ such that $x$ is a cut point of $\vp_{i_1}(F)$. Otherwise, $\vp_i(F)\setminus\{x\}$ is connected for every $i\in\D$. Combining with the fact that $x\in C_F$, we can decompose $\D$ as $\D=\D_1\cup\D_2$, where $\D_1\cap\D_2=\varnothing$, such that
    \[
        F\setminus\{x\} = \Big( \bigcup_{i\in\D_1} \vp_i(F)\setminus\{x\} \Big) \cup \Big( \bigcup_{i\in\D_2} \vp_i(F)\setminus\{x\} \Big) =: F_1 \cup F_2
    \]
    is a disjoint union. If $\Lambda\cap\D_1=\varnothing$, then $x\notin F_1$ and hence $F_1=\bigcup_{i\in\D_1}\vp_i(F)$, so the set
    \[
        F = F_1 \cup F_2 \cup \{x\} = F_1 \cup (F_2\cup\{x\}),
    \]
    as a disjoint union of two closed sets, is disconnected. This is a contradiction. So $\Lambda\cap\D_1\neq\varnothing$. Similarly, $\Lambda\cap\D_2\neq\varnothing$. However, this implies that
    \[
        \Big( \bigcup_{i\in\D_1} \vp_i(F) \Big) \cap \Big( \bigcup_{i\in\D_2} \vp_i(F) \Big) = (F_1\cup\{x\}) \cap (F_2\cup\{x\}) = \{x\},
    \]
    which contradicts the fact that $F$ is non-fragile. 
    
    In conclusion, there is some $i_1\in\D$ with $x\in\vp_{i_1}(F)$ such that $\vp_{i_1}(F)\setminus\{x\}$ is disconnected. By the self-similarity, $x_1:=\vp_{i_1}^{-1}(x)\in C_F$. Applying the same argument to the cut point $x_1$ (instead of $x$) as above, we can find some $i_2\in\D$ such that $x_1\in\vp_{i_2}(F)$ and $\vp_{i_2}(F)\setminus\{x_1\}$ is disconnected. So $x_2:=\vp_{i_2}^{-1}(x_1)=\vp_{i_1i_2}^{-1}(x)$ is a cut point of $F$. Repeating this process, we obtain sequences $\{i_n\}_{n=1}^\infty\subset\D$ and $\{x_n\}_{n=1}^\infty\subset F$, where $x_n:=\vp_{i_1\cdots i_n}^{-1}(x)$, such that for every $n\geq 1$, $\vp_{i_{n}}(F)\setminus\{x_{n-1}\}$ is disconnected (with the interpretation $x_0=x$). Thus, writing $\bi=i_1i_2\cdots$, we have
    \[
        \vp_{\bi|_n}^{-1}(x)=\vp_{i_n}^{-1}(\vp_{\bi|_{n-1}}^{-1}(x))=\vp_{i_n}^{-1}(x_{n-1})\in C_F.
    \]
    Furthermore, $x=\vp_{\i|_n}(x_n)\in\vp_{\bi|_n}(F)$ for all $n$ and hence $\{x\}=\bigcap_{n=1}^\infty\vp_{\bi|_n}(F)$. This completes the proof.
\end{proof}

\begin{corollary}\label{cor:unicutptisfixpt}
    Let $F$ be a non-fragile connected GSC. If $\#C_F=1$, then the unique cut point of $F$ is the fixed point of $\vp_i$ for some $i\in\D$.
\end{corollary}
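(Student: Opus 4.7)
The plan is to invoke Lemma~\ref{lem:desiredcoding} directly and then exploit the uniqueness of the cut point. Let $x$ denote the sole element of $C_F$. Applying Lemma~\ref{lem:desiredcoding} to $x$ yields an infinite word $\bi=i_1i_2\cdots\in\D^\infty$ with the two properties: $\{x\}=\bigcap_{n\geq 1}\vp_{\bi|_n}(F)$, and $\vp_{\bi|_n}^{-1}(x)\in C_F$ for every $n\geq 1$.

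The second property, combined with the hypothesis $\#C_F=1$, forces $\vp_{\bi|_n}^{-1}(x)=x$ for all $n\geq 1$. In particular, taking $n=1$ gives $\vp_{i_1}^{-1}(x)=x$, which is equivalent to $\vp_{i_1}(x)=x$. Hence $x$ is the unique fixed point of the contraction $\vp_{i_1}$, with $i_1\in\D$ as desired.

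There is essentially no obstacle here: the corollary is an immediate consequence of Lemma~\ref{lem:desiredcoding} once one observes that the existence of a coding $\bi$ along which every truncated preimage of $x$ lies in $C_F$ collapses, under the assumption $\#C_F=1$, to the statement that $\vp_{i_1}$ fixes $x$. (One could alternatively combine Remark~\ref{rem:fixedptcutpt} with Theorem~\ref{thm:2orinfinity}: since $F$ has cut points, some $\vp_{\j}$ with $\j\in\D^*$ fixes a cut point, and since $\#C_F=1$ cannot be improved to $\#C_F=\infty$, that fixed point must be $x$; then one descends from $\j$ to its first letter via the argument of Lemma~\ref{lem:desiredcoding}. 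But the direct route above is cleaner.)
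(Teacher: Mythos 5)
Your proof is correct and takes essentially the same route as the paper's: both invoke Lemma~\ref{lem:desiredcoding} and use $\#C_F=1$ to force $\vp_{\bi|_n}^{-1}(x)=x$. Your observation that the case $n=1$ alone already gives $\vp_{i_1}(x)=x$ is a slight streamlining of the paper's concluding step (which notes the stronger fact that $i_n\equiv i$ for all $n$), but the argument is the same.
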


\begin{proof}
    Let $x$ be the unique cut point of $F$. By the above lemma, there is an infinite word $i_1i_2\cdots$ such that $\{x\}=\bigcap_{n=1}^\infty\vp_{i_1\cdots i_n}(F)$ and $\vp_{i_1\cdots i_n}^{-1}(x)\in C_F$ for all $n\geq 1$. So we must have $\vp_{i_1\cdots i_n}^{-1}(x)=x$ for all $n\geq 1$. This is possible only when $i_n\equiv i$ for some $i\in\D$ and $x$ is the fixed point of $\vp_{i}$.
\end{proof}

From Theorem~~\ref{thm:2orinfinity} and Corollary~\ref{cor:unicutptisfixpt}, if there is no digit $i\in\D$ such that the fixed point of $\vp_i$ belongs to $C_F$, then the given non-fragile connected GSC $F$ has either none or infinitely many cut points.

\bigskip

\noindent{\bf Acknowledgements.}
The work of Ruan is supported in part by NSFC grant 11771391, ZJNSF grant LY22A010023 and the Fundamental Research Funds
for the Central Universities of China grant 2021FZZX001-01. The work of Wang is supported in part by the Hong Kong Research Grant Council grants 16308518 and 16317416 and
HK Innovation Technology Fund ITS/044/18FX, as well as Guangdong-Hong Kong-Macao Joint Laboratory for Data-Driven Fluid Mechanics and Engineering Applications. We thank Professors Xin-Rong Dai and Jun Luo for helpful discussions.

\small
\bibliographystyle{amsplain}

\end{document}